\let\chooseClass0   
\let\chooseClass4   
\journalname{Applied Categorical Structures}
\def\@seccntformat#1{\csname the#1\endcsname.\quad}
\renewcommand\section{\@startsection {section}{1}{\z@}%
                                   {-3.5ex \@plus -1ex \@minus -.2ex}%
                                   {2.3ex \@plus.2ex}%
                                   {\normalfont\large\bfseries}}
\renewcommand\subsection{\@startsection{subsection}{2}{\z@}%
                        {3.25ex plus 1ex minus .2ex}{-.5em}%
                        {\normalfont\normalsize\bfseries}}
\renewcommand\subsubsection{\@startsection{subsubsection}{3}{\z@}%
                        {3.25ex plus 1ex minus .2ex}{-.5em}%
                        {\normalfont\normalsize\it}}
\newtheoremstyle{boldhead}
{\topsep}
{\topsep}
{\slshape}
{}
{\bfseries}
{.}
{ }
{\thmname{#1}\thmnumber{ #2}\thmnote{ (#3)}}
\newtheoremstyle{boldremark}
{\topsep}
{\topsep}
{\upshape}
{}
{\bfseries}
{.}
{ }
{\thmname{#1}\thmnumber{ #2}\thmnote{ (#3)}}
\theoremstyle{boldhead}
\newtheorem{theorem}[subsection]{Theorem}
\newtheorem{corollary}[subsection]{Corollary}
\newtheorem{lemma}[subsection]{Lemma}
\newtheorem{proposition}[subsection]{Proposition}
\theoremstyle{boldremark}
\newtheorem*{acknowledgement}{Acknowledgement}
\newtheorem{definition}[subsection]{Definition}
\newtheorem{example}[subsection]{Example}
\newtheorem{remark}[subsection]{Remark}
\numberwithin{equation}{section}
\newcommand\1{{\mathds 1}}
\newlength{\mylabelwidths}
\def\rhaha{\raise.20ex\hbox{$\rightharpoonup$}\kern-1em\lower.20ex\hbox{$\rightharpoondown$}}
\def\lhaha{\raise.20ex\hbox{$\leftharpoonup$}\kern-1em\lower.20ex\hbox{$\leftharpoondown$}}
\def\dhaha{\kern.01em\downharpoonleft\kern-.26em\downharpoonright\kern.01em}
\def\uhaha{\kern.01em\upharpoonleft\kern-.26em\upharpoonright\kern.01em}
\newcommand\LL{{\mathbb L}}
\newcommand\NN{{\mathbb N}}
\newcommand\RR{{\mathbb R}}
\newcommand\ZZ{{\mathbb Z}}
\newcommand{\ca}{{\mathcal A}}
\newcommand{\cb}{{\mathcal B}}
\newcommand{\cc}{{\mathcal C}}
\newcommand{\cd}{{\mathcal D}}
\newcommand{\cf}{{\mathcal F}}
\newcommand{\co}{{\mathcal O}}
\newcommand{\cu}{{\mathcal U}}
\newcommand{\cv}{{\mathcal V}}
\newcommand{\fa}{{\mathfrak a}}
\newcommand{\fA}{{\mathfrak A}}
\newcommand{\fb}{{\mathfrak b}}
\newcommand{\fB}{{\mathfrak B}}
\newcommand{\fc}{{\mathfrak c}}
\newcommand{\fC}{{\mathfrak C}}
\newcommand{\fd}{{\mathfrak d}}
\newcommand{\fD}{{\mathfrak D}}
\newcommand{\mcC}{{\mathsf C}}
\newcommand{\mcD}{{\mathsf D}}
\newcommand{\bdb}{\boxtimes\dots\boxtimes}
\newcommand{\bull}{{\scriptscriptstyle\bullet}}
\newcommand{\htdht}{\hat\otimes\cdots\hat\otimes}
\newcommand{\tdt}{\otimes\dots\otimes}
\newcommand{\yi}{\ddot\imath}
\newcommand{\yii}{\dddot\imath}
\newcommand{\ainf}[1]{$A_\infty$\nobreakdash-\hspace{0pt}}
\newcommand{\n}[1]{\nobreakdash-\hspace{0pt}}
\let\boxt\boxtimes
\let\emptyset\varnothing
\let\eps\varepsilon
\let\ge\geqslant
\let\le\leqslant
\let\mb\mathbf
\let\rto\xrightarrow
\let\tens\otimes
\let\und\underline
\let\wh\widehat
\newcommand{\leftunit}{{\mathbf l}}
\newcommand{\rightunit}{{\mathbf r}}
\newcommand{\modul}{\textup{-mod}}
\newcommand{\modulc}{\textup{-mod}^{\textup c}}
\newcommand{\Quiv}{\textup{-\sf Quiv}}
\DeclareMathOperator{\Ab}{\mathsf{Ab}}
\DeclareMathOperator{\acCat}{acCat}
\DeclareMathOperator{\Alg}{Alg}
\DeclareMathOperator{\acncCat}{acncCat}
\DeclareMathOperator{\Coalg}{Coalg}
\DeclareMathOperator{\Coder}{Coder}
\DeclareMathOperator{\Coker}{Coker}
\DeclareMathOperator{\coCat}{coCat}
\DeclareMathOperator\dg{\mathbf{dg}}
\DeclareMathOperator{\ev}{ev}
\DeclareMathOperator\gr{\mathbf{gr}}
\DeclareMathOperator{\grAb}{\mathsf{grAb}}
\DeclareMathOperator{\cgrAb}{\mathsf{cgrAb}}
\DeclareMathOperator\id{id}
\DeclareMathOperator\Id{Id}
\DeclareMathOperator\im{Im}
\DeclareMathOperator\inj{in}
\DeclareMathOperator\Ker{Ker}
\DeclareMathOperator\Mor{Mor}
\DeclareMathOperator{\ncCat}{ncCat}
\DeclareMathOperator{\cncCat}{cncCat}
\DeclareMathOperator\Ob{Ob}
\newcommand{\op}{{\operatorname{op}}}
\DeclareMathOperator\pr{pr}
\DeclareMathOperator\Set{\mathcal Set}
\newcommand{\apref}[1]{Appendix~\ref{#1}}
\newcommand{\corref}[1]{Corollary~\ref{#1}}
\newcommand{\defref}[1]{Definition~\ref{#1}}
\newcommand{\exaref}[1]{Example~\ref{#1}}
\newcommand{\lemref}[1]{Lemma~\ref{#1}}
\newcommand{\propref}[1]{Proposition~\ref{#1}}
\newcommand{\remref}[1]{Remark~\ref{#1}}
\newcommand{\secref}[1]{Section~\ref{#1}}
\newcommand{\thmref}[1]{Theorem~\ref{#1}}
\author{Volodymyr Lyubashenko}
\title{Filtered cocategories}
\keywords{Cocategory, cofunctor, coderivation, filtration}
\address{Institute of Mathematics NASU, 3 Tereshchenkivska st., Kyiv,
01024, Ukraine}
\begin{document}

\ifx\chooseClass1
\institute{V. Lyubashenko \at
              Institute of Mathematics NASU,
			  3 Tereshchenkivska st.,
			  Kyiv, 01024, Ukraine \\
              Tel.: +380-44-2357819,
              Fax: +380-44-2352010\\
              \email{lub@imath.kiev.ua}
}
\date{Received: date / Accepted: date}
\fi

\maketitle

\allowdisplaybreaks[1]

\begin{abstract}
We recall the notions of a graded cocategory, conilpotent cocategory,
morphisms of such (cofunctors), coderivations and define their analogs
in $\mathbb L$-filtered setting.
The difference with the existing approaches: we do not impose any 
restriction on $\Lambda$\n-modules of morphisms (unlike Fukaya and 
collaborators), we consider a wider class of filtrations than 
De~Deken and Lowen (including directed groups $\mathbb L$).
Results for completed filtered conilpotent cocategories include: 
cofunctors and coderivations with value in completed tensor cocategory 
are described, a partial internal hom is constructed as the tensor 
cocategory of certain coderivation quiver, 
when the second argument is a completed tensor cocategory.
\end{abstract}

\subsection*{Introduction.}
The subject of usual (non-filtered) \ainf-categories is absorbed to some
extent by the subject of $\dg$\n-categories since any non-filtered 
\ainf-category is equivalent to a $\dg$\n-category.
On the other hand, \ainf-categories arising in symplectic geometry 
(Fukaya categories) are naturally $\RR$\n-filtered.
Hence the necessity to study filtered \ainf-categories per se.
Such a study began in works of Fukaya ({\it e.g.} 
\cite{Fukaya:FloerMirror-II}) continued in his works with Oh, Ohta 
and Ono ({\it e.g.} \cite{FukayaOhOhtaOno:Anomaly}).
The restriction imposed in these works (dictated by the geometric origin
of Fukaya categories) is that the modules of morphisms are torsion free 
over a graded commutative filtered ring $\Lambda$, the Novikov ring.
The various freeness requirements are removed in the approach 
of \cite{MR3856926}.
However, they work with $\LL$\n-filtered modules, where the commutative 
monoid $\LL$ has partial ordering such that 
the neutral element 0 is the smallest element of $\LL$.
For instance, such is $\LL=\RR_{\ge0}$, but not $\LL=\RR$.

Here we relax the conditions on partially ordered commutative monoid 
$\LL$, whose elements index the filtration, 
thereby including directed groups.
And we keep the feature of not necessarily torsion free modules 
of morphisms.
Thus we combine the features of works of Fukaya, Oh, Ohta and Ono on 
the one hand and of works of De~Deken and Lowen on the other.
I hope that this combination will be useful for articles on Homological 
Mirror Conjecture of \cite{Kontsevich:alg-geom/9411018}.

In the present article we deal mostly with a predecessor of 
\ainf-categories -- $\LL$\n-filtered $\ZZ$\n-graded cocategories $\fA$ 
over a graded commutative complete $\LL$\n-filtered ring $\Lambda$.
Among $\fA$ we distinguish conilpotent cocategories $\fa$, especially, 
tensor quivers $T\fb$ with cut comultiplication, and their completions 
$\hat\fa$ and $\wh{T\fb}$, respectively.
The completions are taken with respect to uniform structure coming from 
the filtration.
The uniform structure is one of the tools we use in the study 
of filtered cocategories.
Our results include description of morphisms (cofunctors) from 
a completed conilpotent cocategory $\hat\fa$ to $\wh{T\fb}$.
Furthermore, we describe coderivations between such cofunctors.
We define a partial internal hom between completed conilpotent 
cocategories.
Partial because the considered second arguments are only of the form 
$\wh{T\fb}$.
This internal hom is the tensor cocategory of coderivation quiver.
The latter has cofunctors as objects and coderivations as morphisms.
We define also the evaluation cofunctor and prove its property which 
justifies the name of evaluation.
Composition of cofunctors \(\wh{T\fa}\to \wh{T\fb}\to \wh{T\fc}\) 
extends to composition (cofunctor) of internal homs.
When the source $\wh{Ts\ca}$ and the target $\wh{Ts\cb}$ are filtered 
\ainf-categories (equipped with a differential of degree 1 preserving 
the filtration), so is the coderivation quiver (up to a shift).

\tableofcontents

\subsection*{Plan of the article.}
In the first section we deal with non-filtered graded cocategories.
To some extent this is a recollection of \cite{Lyu-AinfCat} and serves 
as an introduction to the filtered case.
The new exposition differs from \cite{Lyu-AinfCat} in the use of 
conilpotent cocategories as a source instead of tensor categories.
Also the proofs of the main results (\propref{prop-psi-ev}) are new.

The second section is devoted to the main subject -- $\LL$\n-filtered 
$\ZZ$\n-graded cocategories, especially, to completed conilpotent 
cocategories.
We begin with conditions on commutative partially ordered monoid $\LL$.
We study (complete) $\LL$\n-filtered $\ZZ$\n-graded abelian groups and 
later (complete) $\LL$\n-filtered $\Lambda$\n-modules 
(\secref{sec-Complete-Lambda-modules}), where $\Lambda$ is a graded 
commutative complete $\LL$\n-filtered ring, for instance, 
the universal Novikov ring.
In \secref{sec-Complete-cocategories} we define completed conilpotent 
cocategories and their morphisms (cofunctors).
We describe cofunctors with values in a completed tensor cocategory in 
\thmref{thm-Cofunctors-1st-projection}.
In \secref{sec-Coderivations} we study coderivations, in particular, in 
\propref{pro-coderivations-bijection} we describe coderivations with 
values in a completed tensor cocategory.
In \secref{sec-Evaluation-filtered} we define the evaluation cofunctor 
and prove in \thmref{thm-Ev-psi} its property, which justifies the name 
of evaluation.

In the third section we apply these results to differential graded 
completed tensor cocategories as known as filtered \ainf-categories.
We prove in \propref{pro-B-F-KS-LH} that the coderivation quiver for two
filtered \ainf-categories is a filtered \ainf-category itself 
(up to a shift).
Examples are given.

\subsection*{Conventions.}
We work in Tarski--Grothendieck set theory originated in 
\cite{Tarski-well-ordered-subsets}.
In this theory everything is a set (or an element of a set) and any set 
is an element of some Grothendieck universe.
In particular, any universe is an element of some (bigger) universe.

Let $\cv$ be a symmetric monoidal category.
By a lax plain/symmetric/\hspace{0pt}braided monoidal $\cv$\n-category 
$\cc$ we mean a $\cv$\n-category equipped with $\tens^I$, $\lambda^f$, 
$\rho^L$ with the properties listed in 
\cite[Definition~2.10]{BesLyuMan-book} (the one with natural 
transformations 
\(\lambda^f:\tens^{i\in I}M_i\to\tens^{j\in J}\tens^{i\in f^{-1}j}M_i\) 
for non-decreasing/arbitrary/arbitrary map of finite ordered 
sets $f:I\to J$).
The same notion bears the name 'oplax` in the works of Day, Street, 
Leinster, Schwede, Shipley and many other authors.
Any finite ordered set $I$ is isomorphic to \(\mb n=\{1<2<\dots<n\}\) 
for a unique $n\ge0$.
In order to reduce the data we assume that 
\(\tens^I=\bigl(\cc^I\cong\cc^n\rTTo^{\tens^n} \cc)\); $\lambda^f$ 
identifies with $\lambda^g$, where \(f:I\to J\) is a map of finite 
ordered sets and \(g:\mb n\to\mb m\) comes from the commutative square
\begin{diagram}
I &\rTTo^\cong &\mb n
\\
\dTTo<f &&\dTTo>g
\\
J &\rTTo^\cong &\mb m
\end{diagram}
$\rho^L$ for 1-element set $L$ reduces to \(\rho^1:\tens^1\to\Id\).
This reduction is used only for easier writing and one can get rid off 
it whenever needed.
Similarly, in the definition of a ($\cv$)-multicategory $\mcC$ we assume
that \(\mcC((M_i)_{i\in I};N)=\mcC(M_{\phi(1)},\dots,M_{\phi(n)};N)\) 
for the only non-decreasing bijection \(\phi:\mb n\to I\), with 
the corresponding requirement on compositions for $\mcC$.
Summing up, the notion reduces to \(I\in\{\mb n\mid n\in\ZZ_{\ge0}\}\) 
and we may simply write \(\mcC(M_1,\dots,M_n;N)\).
The results of the article extend obviously to the picture indexed by 
arbitrary finite ordered sets, which is anyway isomorphic to 
the picture in which only $\mb n$ are used as indexing sets.

Composition of two morphisms of certain degrees \(f:X\to Y\) and 
\(g:Y\to Z\) is mostly denoted \(fg=f\cdot g\).
When the sign issues are irrelevant the composition may be denoted
\(gf=g\circ f\).
Applying a mapping $f$ of certain degree to an element $x$ of certain 
degree we typically write \(xf=(x)f\).
When there are no sign issues the same may be written as \(fx=f(x)\).

\ifx\chooseClass0
\subsubsection*{Acknowledgement.}
	I am really grateful to Kaoru Ono for explaining the geometric side 
	of Fukaya categories.
\else
\begin{acknowledgement}
	I am really grateful to Kaoru Ono for explaining the geometric side 
	of Fukaya categories.
\end{acknowledgement}
\fi

\section{Conilpotent cocategories}\label{sec-intro}
Let $\cv$ be the complete additive symmetric monoidal category with 
small coproducts and directed colimits 
\(\cv=\gr=\gr_\Lambda=\Lambda\modul\), where $\Lambda$ is a 
$\ZZ$\n-graded commutative ring, \(\Lambda\modul\) means the category of
$\ZZ$\n-graded abelian groups which are also $\Lambda$\n-modules 
(and the action \(\Lambda\tens M\to M\) has degree 0).
Besides these properties sometimes we use also that $\cv$ is closed 
symmetric monoidal.
Examples of $\Lambda$ are the universal Novikov ring 
\(\Lambda_{0,nov}(R)\) and its localization \(\Lambda_{nov}(R)\), 
see \cite[\S1.7 (Conv. 4)]{FukayaOhOhtaOno:Anomaly}.
Left $\Lambda$\n-modules are viewed as commutative 
$\Lambda$\n-bimodules, 
\(\pm m\lambda\equiv\mu\tau(\lambda\tens m)=\lambda m\).
In general, commutativity is considered with respect to the symmetry 
\(\tau(x\tens y)=(-)^{xy}y\tens x=(-1)^{\deg x\cdot\deg y}y\tens x\).
Thus, \((\cv,\tens,\1,\tau,\und\cv)\) means  
\((\gr_\Lambda,\tens_\Lambda,\Lambda,\tau,\und\gr)\), where 
the inner hom is
\[ \und\gr(M,N)^d =\{f\in\prod_{n\in\ZZ}\Ab(M^n,N^{n+d}) \mid  
\forall n\in\ZZ\;\forall p\in\ZZ\;\forall\lambda\in\Lambda^p\; 
\lambda f=f\lambda:M^n\to N^{n+d+p}\}.
\]

\begin{definition}
A $\cv$\n-quiver $\fa$ is a set of objects \(\Ob\fa\) and an object 
\(\fa(X,Y)\in\Ob\cv\) given for each pair of objects \(X,Y\in\Ob\fa\).
The category of $\cv$\n-quivers \(\cv\Quiv\) has as morphisms 
\(f:\fa\to\fb\) collections consisting of a map 
\(f=\Ob f:\Ob\fa\to\Ob\fb\) and morphisms \(f:\fa(X,Y)\to\fb(fX,fY)\) 
for each pair of objects \(X,Y\in\Ob\fa\).
\end{definition}

\begin{example}
Let $S$ be a set.
One forms a $\cv$\n-quiver $\1S$ with \(\Ob\1S=S\),
\[ \1S(X,Y) =
\begin{cases}
\1, &\textup{if } X=Y, \\
0 , &\textup{if } X\ne Y.
\end{cases}
\]
A mapping \(f:S\to Q\) induces a quiver morphism \(\1f:\1S\to\1Q\) with 
\(\Ob\1f=f\) and \(\1f=\id_\1:\1S(X,X)\to\1Q(fX,fX)\) for any \(X\in S\).
\end{example}

The category \(\cv\Quiv\) is symmetric monoidal with the tensor product 
$\boxt$
\begin{align*}
\Ob\fa\boxt\fb &=\Ob\fa\times\Ob\fb, \\
(\fa\boxt\fb)((A,B),(A',B')) &=\fa(A,A')\tens\fb(B,B').
\end{align*}
The unit object quiver $\1$ has one-element set \(\Ob\1=\{*\}\) and 
\(\1(*,*)=\1\).
The symmetry comes from that of $\cv$.
Since the symmetric monoidal category $\cv$ is closed, so is 
\(\cv\Quiv\) with inner hom object \(\und{\cv\Quiv}(\fa,\fb)\), which is
the $\cv$\n-quiver with 
\(\Ob\und{\cv\Quiv}(\fa,\fb)=\Set(\Ob\fa,\Ob\fb)\), and for any pair of 
maps \(f,g:\Ob\fa\to\Ob\fb\)
\[ \und{\cv\Quiv}(\fa,\fb)(f,g) 
=\prod_{X,Y\in\Ob\fa} \und\cv(\fa(X,Y),\fb(fX,gY)).
\]
The evaluation morphism \(\ev:\fa\boxt\und{\cv\Quiv}(\fa,\fb)\to\fb\) 
(the adjunct of \(1_{\und{\cv\Quiv}(\fa,\fb)}\)) is given by
\begin{multline*}
\hfill (X,f) \mapsto fX, \hfill
\\
\hskip\multlinegap \fa(X,Y)\tens\prod_{X',Y'\in\Ob\fa} 
\und\cv(\fa(X',Y'),\fb(fX',gY')) \rTTo^{1\tens\pr_{X,Y}} \hfill
\\[-0.9em]
\fa(X,Y)\tens \und\cv(\fa(X,Y),\fb(fX,gY)) \rTTo^\ev \fb(fX,gY).
\end{multline*}

By definition there is a functor \(\Ob:\cv\Quiv\to\Set\), 
\(\fa\mapsto\Ob\fa\).
Consider the fiber \(\cv\Quiv_S\) of this functor over a set $S$, 
that is,
\begin{align*}
\Ob\cv\Quiv_S &= \{\fa\in\cv\Quiv \mid \Ob\fa =S\},
\\
\Mor\cv\Quiv_S &= \{f\in\Mor\cv\Quiv \mid \Ob f =\id_S\}.
\end{align*}
Since $\cv$ is abelian, so is \(\cv\Quiv_S\) being isomorphic to 
\(\cv^{S\times S}\).

\begin{definition}
The category \(\cv\Quiv_S\) is monoidal with the tensor product $\tens$
\[ (\fa\tens\fb)(X,Z) =\coprod_{Y\in S} \fa(X,Y)\tens\fb(Y,Z),
\]
and the unit object $\1S$.
\end{definition}

For an arbitrary $\cv$\n-quiver $\fa$ we denote 
\(T^n\fa=\fa^{\tens n}\in\cv\Quiv_{\Ob\fa}\), \(n\ge0\), 
\(T^0\fa=\1\Ob\fa=\1\fa\).
We use \(\1\fa\) as a shorthand for \(\1\Ob\fa\) and \(\1f\) as a 
shorthand for \(\1\Ob f\), where $f$ is a morphism of quivers.

Let $\fa_1$, \dots, $\fa_n$, $\fb_1$, \dots, $\fb_n$ be $\cv$\n-quivers 
with \(\Ob\fa_1=\dots=\Ob\fa_n=S\), \(\Ob\fb_1=\dots=\Ob\fb_n=Q\).
Let \(f_i:\fa_i\to\fb_i\), \(1\le i\le n\), be morphisms of quivers 
such that \(\Ob f_i=f:S\to Q\).
Then the following morphism is well-defined
\begin{multline}
\hfill f_1\tdt f_n: \fa_1\tdt\fa_n \to \fb_1\tdt\fb_n, \hfill
\\
\Ob f_1\tdt f_n =f,
\\[-0.3em]
\hskip\multlinegap f_1\tdt f_n =\Bigl[ \coprod_{X_1,\dots,X_{n-1}\in S} 
\fa_1(X_0,X_1)\tdt\fa_n(X_{n-1},X_n) \rTTo^{f_1\tdt f_n} \hfill
\\[-0.3em]
\coprod_{X_1,\dots,X_{n-1}\in S} 
\fb_1(fX_0,fX_1)\tdt\fb_n(fX_{n-1},fX_n) 
\rTTo^{(\inj_{fX_1,\dots,fX_{n-1}})_{X_1,\dots,X_{n-1}\in S}}
\\[-0.5em]
\coprod_{Y_1,\dots,Y_{n-1}\in Q} 
\fb_1(fX_0,Y_1)\tdt\fb_n(Y_{n-1},fX_n) \Bigr].
\label{eq-f1tdtfn}
\end{multline}
In the case $n=0$ (when a map \(f:S\to Q\) is given) \(f_1\tdt f_n\) is
the morphism \(\1f:\1S\to\1Q\).

\begin{definition}\label{def-cocategory}
A cocategory $\fc$ is a coalgebra in the monoidal category 
\(\cv\Quiv_S\).
Of course, \(S=\Ob\fc\).
In other words, $\fc$ is a $\cv$\n-quiver equipped with a coassociative 
comultiplication \(\Delta:\fc\to\fc\tens\fc\) and the counit 
\(\eps:\fc\to\1\fc\) which satisfies the usual counitality equations.
Morphisms of cocategories (cofunctors) \(f:\fb\to\fc\) are morphisms of 
$\cv$\n-quivers compatible with the comultiplication and the counit 
in the sense that
\begin{equation}
\begin{diagram}[inline]
\fb &\rTTo^f &\fc \\
\dTTo<\Delta &= &\dTTo>\Delta \\
\fb\tens\fb &\rTTo^{f\tens f} &\fc\tens\fc
\end{diagram}
\qquad,\qquad
\begin{diagram}[inline]
\fb &\rTTo^f &\fc \\
\dTTo<\eps &= &\dTTo>\eps \\
\1\fb &\rTTo^{\1f} &\1\fc
\end{diagram}
\qquad.
\label{eq-dia-Delta-epsilon}
\end{equation}
The category of cocategories is denoted $\coCat$.
\end{definition}

\begin{example}
For any set $S$ the $\cv$\n-quiver $\1S$ is a cocategory with the 
identity morphism \(\id_{\1S}\)  as $\eps$ and the isomorphism 
\(\Delta:\1S\to\1S\tens\1S\), coming from the canonical isomorphism 
\(\1\cong\1\tens\1\).
\end{example}

\begin{definition}\label{def-augmented-cocategory}
An augmented cocategory $\fc$ is a coalgebra morphism 
\(\eta:\1\fc\to\fc\) in \(\cv\Quiv_{\Ob\fc}\).
Morphisms of augmented cocategories \(f:\fb\to\fc\) are morphisms of 
cocategories compatible with the augmentation, that is,
\begin{diagram}[LaTeXeqno]
\1\fb &\rTTo^{\1f} &\1\fc \\
\dTTo<\eta &= &\dTTo>\eta \\
\fb &\rTTo^f &\fc
\label{dia-augmentation-preserving}
\end{diagram}
The category of augmented cocategories is denoted $\acCat$.
\end{definition}

Notice that \(\Ob\eta=\id_{\Ob\fc}\) for an augmented cocategory $\fc$.
It follows from \eqref{eq-dia-Delta-epsilon} that 
\(\eta\cdot\eps=1_\fc\).

Recall that the category $\cv=\gr$ is idempotent complete.
An augmented cocategory $\fc$ splits into a direct sum 
\(\fc=\1\fc\oplus\bar{\fc}\) in \(\cv\Quiv_{\Ob\fc}\) so that $\eps$ 
becomes \(\pr_1\) and $\eta$ becomes $\inj_1$.
A non-counital comultiplication, induced on \(\bar{\fc}\),
\begin{equation}
\bar\Delta =\Delta -\eta\tens1 -1\tens\eta: 
\bar\fc \to \bar\fc\tens\bar\fc \in \cv\Quiv_{\Ob\fc}
\label{eq-bar-Delta}
\end{equation}
is coassociative.
In fact, \(\pr_2:\fc\to\bar\fc\) identifies with the canonical projection
\(\pi:\fc\to\Coker\eta=\fc/\im\eta\) and $\bar\Delta$ can be found from
\begin{diagram}[LaTeXeqno]
\fc &\rTTo^\Delta &\fc\tens\fc
\\
\dTTo<\pi &= &\dTTo>{\pi\tens\pi}
\\
\bar\fc &\rTTo^{\bar\Delta} &\bar\fc\tens\bar\fc
\label{piDelta=Deltapipi}
\end{diagram}

\begin{definition}\label{def-conilpotent-cocategory}
An augmented cocategory $\fc$ is called \emph{conilpotent} when 
\((\bar\fc,\bar\Delta)\) is conilpotent, that is,
\[ \bigcup_{n>1} \Ker(\bar\Delta^{(n)}: 
\bar{\fc} \to \bar{\fc}^{\tens n}) =\bar{\fc}.
\]
The full subcategory of $\acCat$ whose objects are conilpotent 
cocategories is denoted $\ncCat$.
\end{definition}


\begin{example}\label{exa-T-conilpotent-cocategory}
For an arbitrary $\cv$\n-quiver $\fa$ there is the tensor quiver 
\(T\fa=\coprod_{n\ge0}T^n\fa=\coprod_{n\ge0}\fa^{\tens n}\equiv\oplus_{n\ge0}\fa^{\tens n}\).
Define comultiplication \(\Delta:T\fa\to T\fa\tens T\fa\) as the sum of 
canonical isomorphisms 
\(\fa^{\tens n}\to\fa^{\tens k}\tens\fa^{\tens l}\), \(k+l=n\), 
\(k,l\ge0\).
On elements
\[ \Delta(h_1\tens h_2\tens\dots\tens h_n) 
=\sum_{k=0}^n h_1\tdt h_k\bigotimes h_{k+1}\tdt h_n
\]
is the cut comultiplication.
The counit is \(\eps=\pr_0:T\fa\to T^0\fa=\1\fa\), the augmentation 
is \(\eta=\inj_0:T^0\fa\to T\fa\).
The direct summand \(\overline{T\fa}=T^{>0}\fa=\oplus_{n>0}T^n\fa\) is 
equipped with the reduced comultiplication 
\(\bar\Delta:T^{>0}\fa\to T^{>0}\fa\tens T^{>0}\fa\) which is the sum of
canonical isomorphisms 
\(\fa^{\tens n}\to\fa^{\tens k}\tens\fa^{\tens l}\), \(k+l=n\), 
\(k,l>0\).
On elements
\[ \bar\Delta(h_1\tens h_2\tens\dots\tens h_n) 
=\sum_{k=1}^{n-1} h_1\tdt h_k\bigotimes h_{k+1}\tdt h_n.
\]
Since it is conilpotent, the augmented cocategory \(T\fa\) 
is conilpotent.
It is called the cofree conilpotent cocategory.
The reasons are clear from the following
\end{example}

\begin{proposition}\label{pro-tensor-ncCat}
The tensor cocategory construction extends to a functor 
\(T:\cv\Quiv\to\ncCat\), representing the left hand side of a natural 
bijection 
\[ \{\phi:\fa\to\fb\in\cv\Quiv\mid \eta\cdot\phi=0\} 
\cong\ncCat(\fa,T\fb).
\]
\end{proposition}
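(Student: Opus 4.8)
The plan is to realize $T\fb$ as the cofree conilpotent cocategory on the quiver $\fb$, by exhibiting the asserted bijection explicitly and then checking its naturality. First I extend $T$ to morphisms: a quiver morphism $g\colon\fb\to\fb'$ goes to $Tg=\coprod_{n\ge0}g^{\tens n}\colon T\fb\to T\fb'$. Because $g^{\tens k}\tens g^{\tens l}=g^{\tens(k+l)}$, the map $Tg$ commutes with the cut comultiplication, and it visibly respects the counit $\eps=\pr_0$ and the augmentation $\eta=\inj_0$, so $Tg\in\ncCat(T\fb,T\fb')$; functoriality $T(g\cdot g')=Tg\cdot Tg'$, $T\id=\id$ is immediate.

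For the bijection I describe maps in both directions. A cofunctor $f\colon\fa\to T\fb$ is sent to $\Phi(f)=f\cdot\pr_1\colon\fa\to\fb$, where $\pr_1\colon T\fb\to T^1\fb=\fb$; since $f$ preserves the augmentation and $\inj_0\cdot\pr_1=0$, one has $\eta\cdot\Phi(f)=\1f\cdot\inj_0\cdot\pr_1=0$, so $\Phi(f)$ lies in the left-hand set. Conversely, given $\phi\colon\fa\to\fb$ with $\eta\cdot\phi=0$, the hypothesis factors $\phi=\pi\cdot\bar\phi$ through the projection $\pi=\pr_2\colon\fa\to\bar\fa=\Coker\eta$, and I set $\Psi(\phi)=f$ with $\Ob f=\Ob\phi$ and
\[
f_n=\bigl[\,\fa\rto{\pi}\bar\fa\rto{\bar\Delta^{(n)}}\bar\fa^{\tens n}\rto{\bar\phi^{\tens n}}\fb^{\tens n}=T^n\fb\rto{\inj_n}T\fb\,\bigr],\qquad n\ge1,
\]
together with $f_0=\eps\cdot\1f\cdot\inj_0$, and $f=\sum_{n\ge0}f_n$.

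Here conilpotency makes the sum meaningful: since $\bar\Delta^{(n)}$ factors through $\bar\Delta^{(N)}$ for $n\ge N$, the subobject $\Ker\bar\Delta^{(N)}$ is annihilated by every $f_n$ with $n\ge N$, so on it all but finitely many summands vanish and the image lands in $\oplus_{n<N}T^n\fb$; as $\bar\fa=\bigcup_{N>1}\Ker\bar\Delta^{(N)}$ is the filtered union of these subobjects, the partial sums are compatible and assemble into a single well-defined $f$. I then verify that $f$ is a cofunctor. Counitality $f\cdot\pr_0=\eps\cdot\1f$ and augmentation-preservation $\eta\cdot f=\1f\cdot\inj_0$ follow at once from $\inj_0\cdot\pr_0=1$, $\inj_n\cdot\pr_0=0$ ($n\ge1$), $\eta\cdot\eps=1$ and $\eta\cdot\pi=0$. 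The comultiplication identity $f\cdot\Delta=\Delta\cdot(f\tens f)$ is the crux: projecting the target onto the $(k,l)$-summand $T^k\fb\tens T^l\fb$ with $k,l\ge1$, the left side becomes $\pi\cdot\bar\Delta^{(k+l)}\cdot\bar\phi^{\tens(k+l)}$ (the cut selects the canonical split of $\fb^{\tens(k+l)}$), while, using $\Delta\cdot(\pi\tens\pi)=\pi\cdot\bar\Delta$ from \eqref{piDelta=Deltapipi}, the coassociativity relation $\bar\Delta^{(k+l)}=\bar\Delta\cdot(\bar\Delta^{(k)}\tens\bar\Delta^{(l)})$ and $\bar\phi^{\tens k}\tens\bar\phi^{\tens l}=\bar\phi^{\tens(k+l)}$, the right side collapses to the same expression; the degenerate cases $k=0$ or $l=0$ reduce to counitality.

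Finally I check the two maps are mutually inverse and natural. One way is immediate: $\Phi(\Psi(\phi))=f\cdot\pr_1=\phi$, because $\bar\Delta^{(1)}=\id$ and $\phi$ vanishes on $\im\eta$. For the other, iterating the comultiplication compatibility of an arbitrary cofunctor $f$ and projecting along $\pr_1^{\tens n}$ shows $f\cdot\pr_n=\pi\cdot\bar\Delta^{(n)}\cdot\bar\phi^{\tens n}$ with $\phi=\Phi(f)$, which are exactly the components of $\Psi(\Phi(f))$; conilpotency again guarantees that $f$ is recovered from these components, whence $\Psi(\Phi(f))=f$. Naturality in $\fb$ uses $\pr_1\cdot Tg=g\cdot\pr_1$, and naturality in $\fa$ is a direct chase through the factorization by $\pi$. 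The one genuinely computational obstacle is the comultiplication identity for $\Psi(\phi)$: matching the cut comultiplication of $T\fb$ with the iterated reduced comultiplication of $\fa$ while keeping track of how the augmentation terms in $\Delta=\bar\Delta+\eta\tens1+1\tens\eta$ interact with the projections. Everything else is a formal consequence of coassociativity and the splitting $\fa=\1\fa\oplus\bar\fa$ already recorded.
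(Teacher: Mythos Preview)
Your proof is correct and follows essentially the same approach as the paper: both send a cofunctor $f$ to $\check f=f\cdot\pr_1$ and reconstruct $f$ from $\phi$ via the iterated comultiplication composed with $\phi^{\tens k}$. The only cosmetic difference is that the paper works with the full $\Delta^{(k)}\cdot\check f^{\tens k}$ (relying on $\eta\cdot\check f=0$ to kill the unit insertions) while you factor through $\pi\cdot\bar\Delta^{(n)}\cdot\bar\phi^{\tens n}$ directly; these are the same map. Your write-up is in fact more thorough than the paper's, which records the key diagram showing $f\cdot\pr_k=\Delta^{(k)}\cdot\check f^{\tens k}$ and the well-definedness of the sum, but leaves the verification that the resulting $f$ satisfies $f\cdot\Delta=\Delta\cdot(f\tens f)$ implicit.
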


\begin{proof}
For any \(g:\fa\to\fb\in\cv\Quiv\) the morphisms \(T^ng=g^{\tens n}\) 
constructed in \eqref{eq-f1tdtfn} form $Tg$ and extend $T$ to a functor.
Any morphism \(f:\fa\to T\fb\in\ncCat\) is uniquely determined by the 
composition \(\check{f}=f\cdot\pr_1:\fa\to\fb\in\cv\Quiv\) such that 
\(\eta\cdot\check f=0\), as the following commutative diagram shows
\begin{diagram}[width=4em]
&&T\fb &&
\\
&\ruTTo^f &\dTTo<{\Delta^{(k)}} &\rdTTo^{\pr_k} &
\\
\fa &&(T\fb)^{\tens k} &\rTTo^{\hspace*{-2em}\pr_1^{\tens k}} 
&\fb^{\tens k}
\\
&\rdTTo<{\Delta^{(k)}} &\uTTo<{f^{\tens k}} 
&\ruTTo_{\check{f}^{\tens k}} &
\\
&&\fa^{\tens k} &&
\end{diagram}
Namely, the following expression makes sense in notation 
\(x_{(1)}\tdt x_{(k)} \equiv\Delta^{(k)}(x)\)
\[ f(x) =\sum_{k\ge0} \check f(x_{(1)})\tdt\check f(x_{(k)}) 
=(x)\eps\cdot(\1f)\cdot\inj_0 
+\sum_{k\ge1} \check f(x_{(1)})\tdt\check f(x_{(k)}),
\]
since \(\bar\Delta^{(k)}(x)=0\) for large $k$ and 
\(\eta\cdot\check f=0\).
\end{proof}

The category $\coCat$ is symmetric monoidal with the tensor product 
$\boxt$ given by
\begin{multline*}
\Big( \fa\boxt\fb, \Delta =\big(\fa\boxt\fb \rTTo^{\Delta\boxt\Delta} 
(\fa\tens\fa)\boxt(\fb\tens\fb) \rTTo^{\oplus1\tens c\tens1} 
(\fa\boxt\fb)\tens(\fa\boxt\fb)\big),
\\
\eps =\big(\fa\boxt\fb 
\rTTo^{\eps\boxt\eps} \1\fa\boxt\1\fb \cong \1(\Ob\fa\times\Ob\fb) 
=\1(\fa\boxt\fb)\big)\Big).
\end{multline*}
The isomorphism \(\tau_{(23)}=\oplus1\tens\tau\tens1: 
(\fa\tens\fa)\boxt(\fb\tens\fb)\to(\fa\boxt\fb)\tens(\fa\boxt\fb)\) 
(the middle four interchange) is the direct sum of isomorphisms
\[ 1\tens\tau\tens1: \fa(X,Y)\tens\fa(Y,Z)\tens\fb(U,V)\tens\fb(V,W) \to
\fa(X,Y)\tens\fb(U,V)\tens\fa(Y,Z)\tens\fb(V,W).
\]
Hence the category \(\acCat\) is symmetric monoidal.
The augmentation for the tensor product \(\fa\boxt\fb\) of augmented 
cocategories $\fa$, $\fb$ is
\[ \eta =\big(\1(\fa\boxt\fb) =\1(\Ob\fa\times\Ob\fb) \cong 
\1\fa\boxt\1\fb \rTTo^{\eta\boxt\eta} \fa\boxt\fb\big).
\]

\begin{proposition}
	\label{pro-nilpotent-full-monoidal-subcategory-augmented}
The category $\ncCat$ is a full monoidal subcategory of $\acCat$.
\end{proposition}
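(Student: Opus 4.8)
The plan is to exploit that almost everything is already in place. Fullness requires nothing: by \defref{def-conilpotent-cocategory} the category $\ncCat$ \emph{is} the full subcategory of $\acCat$ cut out by the objects that are conilpotent. Hence the only content is that the symmetric monoidal structure of $\acCat$ restricts, and for this I would check just two closure properties: the monoidal unit $\1$ is conilpotent, and $\fa\boxt\fb$ is conilpotent whenever $\fa,\fb$ are. The associativity, unit and symmetry constraints of $\acCat$ then automatically become morphisms of $\ncCat$ by fullness, since their sources and targets are iterated $\boxt$\n-products of conilpotent cocategories. The unit is disposed of at once: for $\1$ the augmentation $\eta$ is invertible, so $\overline{\1}=\Coker\eta=0$ is vacuously conilpotent.

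The real work is the closure of conilpotency under $\boxt$. From the idempotent splittings $\fa=\1\fa\oplus\bar\fa$, $\fb=\1\fb\oplus\bar\fb$ together with $\1\fa\boxt\1\fb\cong\1(\fa\boxt\fb)$ one reads off $\overline{\fa\boxt\fb}=(\1\fa\boxt\bar\fb)\oplus(\bar\fa\boxt\1\fb)\oplus(\bar\fa\boxt\bar\fb)$, and the comultiplication is the tensor-product comultiplication $(\Delta\boxt\Delta)\cdot\tau_{(23)}$ recalled just before the statement. For a conilpotent $\fc$ I would attach to each $x\in\bar\fc$ its weight $w(x)=\max\{k\mid\bar\Delta^{(k)}(x)\neq0\}$, which is finite by \defref{def-conilpotent-cocategory} (once $\bar\Delta^{(k)}(x)$ vanishes it stays zero). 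The main tool is the formula
\[
\bar\Delta^{(n)}=\inj\cdot\Delta^{(n)}\cdot\pi^{\tens n}\colon\bar\fc\to\fc\to\fc^{\tens n}\to\bar\fc^{\tens n},
\]
obtained by iterating \eqref{piDelta=Deltapipi} and using $\inj\cdot\pi=1$, combined with the standard expansion of the full iterated comultiplication: for $x\in\bar\fc$ the summands of $\Delta^{(n)}(\inj x)$ whose reduced components sit exactly in a set $I\subseteq\{1,\dots,n\}$ of tensor positions (units elsewhere) assemble to $\bar\Delta^{(|I|)}(x)$. In particular no summand of $\Delta^{(n)}(\inj x)$ carries a nonzero reduced component in more than $w(x)$ of its $n$ factors.

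The key estimate is then $w(c\tens d)\le w(c)+w(d)$ in $\overline{\fa\boxt\fb}$. Applying the displayed formula to $E=\fa\boxt\fb$ and using that $\Delta^{(n)}_E$ is the iterated middle four interchange of $\Delta^{(n)}_\fa$ and $\Delta^{(n)}_\fb$, a pure summand of $\Delta^{(n)}_E(\inj(c\tens d))$ has $i$-th factor of the form $c_{(i)}\tens d_{(i)}$, and $\pi_E$ kills the $\1\fa\tens\1\fb$\n-component of each factor, so that $\pi_E(c_{(i)}\tens d_{(i)})\ne0$ requires $c_{(i)}$ or $d_{(i)}$ to have a nonzero reduced part. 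By the expansion above at most $w(c)$ positions carry a reduced $\fa$\n-part and at most $w(d)$ positions a reduced $\fb$\n-part, so a summand survives $\pi_E^{\tens n}$ only when these positions cover all of $\{1,\dots,n\}$, forcing $n\le w(c)+w(d)$. Thus $\bar\Delta^{(n)}_E(c\tens d)=0$ for $n>w(c)+w(d)$, and since any element of a hom-module of $\overline{\fa\boxt\fb}$ is a finite sum of such pure tensors, linearity gives $\bigcup_n\Ker\bar\Delta^{(n)}_E=\overline{\fa\boxt\fb}$; the summands $\1\fa\boxt\bar\fb$ and $\bar\fa\boxt\1\fb$ are just the degenerate cases $w(c)=0$ or $w(d)=0$. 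I expect the main obstacle to be the bookkeeping behind the displayed expansion of $\Delta^{(n)}$: one has to verify carefully, from coassociativity and \eqref{eq-bar-Delta}, that grouping the summands by their set of reduced positions reproduces $\bar\Delta^{(|I|)}(x)$, since this is exactly what makes $w(x)$ bound the number of reduced tensor factors and turns the counting argument into the conilpotency of $\fa\boxt\fb$.
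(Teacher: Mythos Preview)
Your proposal is correct and follows essentially the same route as the paper's proof: both decompose $\overline{\fa\boxt\fb}$ into the three summands $\bar\fa\boxt\bar\fb$, $\bar\fa\boxt\1\fb$, $\1\fa\boxt\bar\fb$, both express $\bar\Delta^{(n)}$ on the product via $\Delta^{(n)}\boxt\Delta^{(n)}$ followed by projections, and both conclude via the counting observation that each of the $n$ tensor positions must carry a reduced factor from $\fa$ or from $\fb$. The only cosmetic difference is packaging: you encode the argument through a weight function and the inequality $w(c\tens d)\le w(c)+w(d)$, whereas the paper draws the unshuffle diagram explicitly and reads off $p+q\ge l$ for the cardinalities $p,q$ of reduced positions; your expansion of $\Delta^{(n)}$ in terms of $\bar\Delta^{(|I|)}$ over subsets $I$ is exactly formula~\eqref{eq-Delta-bar-Delta}, which the paper states later.
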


\begin{proof}
Given two conilpotent cocategories $\fc$ and $\fd$, let us prove that 
\(\fc\boxt\fd\) is conilpotent.
The canonical projections \(\pi:\fc\to\bar\fc\) and 
\(\pi:\fd\to\bar\fd\) allow to write 
\(\pi:\fc\boxt\fd\to\overline{\fc\boxt\fd}\) as
\[ (\pi\boxt\pi,\pi\boxt\eps,\eps\boxt\pi): \fc\boxt\fd \to 
\bar\fc\boxt\bar\fd \oplus \bar\fc\boxt\1\fd \oplus \1\fc\boxt\bar\fd.
\]
Diagram~\eqref{piDelta=Deltapipi} implies
\begin{diagram}
\fc &\rTTo^{\Delta^{(l)}} &\fc^{\tens l}
\\
\dTTo<\pi &= &\dTTo>{\pi^{\tens l}}
\\
\bar\fc &\rTTo^{\bar\Delta^{(l)}} &\bar\fc^{\tens l}
\end{diagram}
and similarly for $\fd$ and \(\fc\boxt\fd\).
Hence,
\[ \hspace{-4.5em}
\begin{diagram}[h=0.9em,inline,nobalance]
\HmeetV &&\rLine^{\Delta^{(l)}\boxt\Delta^{(l)}} &&\HmeetV
\\
\uLine &&= &&\dTTo
\\
\\
\fc\boxt\fd &\rTTo^{\Delta^{(l)}} &(\fc\boxt\fd)^{\tens l} 
&\rTTo^{\text{unshuffle}}_\cong &\fc^{\tens l}\boxt\fd^{\tens l}
\\
\\
\dTTo<{\Bigl(
\begin{smallmatrix}
\pi\boxt\pi\\
\pi\boxt\eps\\
\eps\boxt\pi
\end{smallmatrix}
\Bigr)} &&\dTTo>{\Bigl(
\begin{smallmatrix}
\pi\boxt\pi\\
\pi\boxt\eps\\
\eps\boxt\pi
\end{smallmatrix}
\Bigr)^{\tens l}} 
&&\dTTo~{(\nu(i_1)\tdt\nu(i_l)\boxt\nu(j_1)\tdt\nu(j_l))_{i,j}}
\\
&= &&= &
\\
\begin{matrix}
\bar\fc\boxt\bar\fd \\
\oplus \\
\bar\fc\boxt\1\fd \\
\oplus \\
\1\fc\boxt\bar\fd
\end{matrix}
&\rTTo^{\bar\Delta^{(l)}} &
\begin{pmatrix}
\bar\fc\boxt\bar\fd \\
\oplus \\
\bar\fc\boxt\1\fd \\
\oplus \\
\1\fc\boxt\bar\fd
\end{pmatrix}
^{\tens l}
&\rTTo^{\text{unshuffle}}_\cong
&\bigoplus^{i_k,j_k\in\{0,1\}}_{i_k+j_k>0} 
\bar\fc^{\tens i_1}\tdt\bar\fc^{\tens i_l}
\boxt\bar\fd^{\tens j_1}\tdt\bar\fd^{\tens j_l}
\end{diagram}
\]
where \(\nu(0)=\eps\) and \(\nu(1)=\pi\).
Denote by \(p=\#\{k\mid i_k=1\}\) and \(q=\#\{k\mid j_k=1\}\) 
certain cardinalities.
The canonical isomorphism of the summand in the bottom right corner with
\(\bar\fc^{\tens p}\boxt\bar\fd^{\tens q}\) satisfies
\begin{align*}
\Delta^{(l)}\cdot(\nu(i_1)\tdt\nu(i_l))\cdot\text{iso} 
&= \Delta^{(p)}\cdot\pi^{\tens p} =\pi\cdot\bar\Delta^{(p)}: 
\fc \to \bar\fc^{\tens p},
\\
\Delta^{(l)}\cdot(\nu(j_1)\tdt\nu(j_l))\cdot\text{iso} 
&= \Delta^{(q)}\cdot\pi^{\tens q} =\pi\cdot\bar\Delta^{(q)}: 
\fd \to \bar\fd^{\tens q}.
\end{align*}
Clearly, \(p+q\ge l\).
Therefore, if \(c\in\bar\fc\), \(\bar\Delta^{(n)}c=0\) and 
\(d\in\bar\fd\), \(\bar\Delta^{(m)}d=0\), then the lower row applied 
to \(c\boxt d\) vanishes for \(l=n+m-1\).
If \(c\in\bar\fc\), \(\bar\Delta^{(n)}c=0\) and \(U\in\Ob\fd\), then the
lower row applied to \(c\boxt\eta_U\) ends up only in the summand with 
\(i_1=\dots=i_l=1\), \(j_1=\dots=j_l=0\).
Hence, \(\bar\Delta^{(l)}(c\boxt\eta_U)=0\) for $l=n$.
Similarly, if \(X\in\Ob\fc\) and \(d\in\bar\fd\), 
\(\bar\Delta^{(m)}d=0\), then \(\bar\Delta^{(m)}(\eta_X\boxt d)=0\).
\end{proof}

Being a full monoidal subcategory of $\acCat$ the category $\ncCat$ 
is symmetric.

\begin{definition}
Let \(f,g:\fa\to\fb\in\coCat\).
An \((f,g)\)\n-coderivation \(r:f\to g:\fa\to\fb\) of degree $d$ is a 
collection of morphisms \(r:\fa(X,Y)\to\fb(fX,gY)\) of degree $d$, which
satisfies the equation \(r\cdot\Delta=\Delta\cdot(f\tens r+r\tens g)\).
\end{definition}

The maps \(f\tens r,r\tens g:\fa\tens\fa\to\fb\tens\fb\) are defined 
similarly to \eqref{eq-f1tdtfn}.

\begin{proposition}\label{pro-coderivations-components-T}
Let \(f,g:\fa\to T\fb\in\ncCat\).
\((f,g)\)\n-coderivations \(r:f\to g:\fa\to T\fb\) of degree $d$ are in 
bijection with the collections of morphisms 
\(\check r=r\cdot \pr_1:\fa(X,Y)\to\fb(fX,gY)\) of degree $d$.
\end{proposition}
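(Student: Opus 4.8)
The plan is to follow the proof of \propref{pro-tensor-ncCat} step by step, with coderivations in place of cofunctors. To a coderivation \(r:f\to g\) one assigns its first component \(\check r=r\cdot\pr_1\), a collection of degree\n-$d$ maps \(\fa(X,Y)\to\fb(fX,gY)\); the assertion is that \(r\mapsto\check r\) is a bijection. I would establish injectivity by an explicit reconstruction of \(r\) from \(\check r\), \(\check f=f\cdot\pr_1\) and \(\check g=g\cdot\pr_1\), and surjectivity by showing that the same formula turns an arbitrary collection \(\check r\) into a coderivation.

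The heart of the matter is an iterated form of the coderivation equation. Writing \(\Delta^{(n)}\) for the iterated comultiplication (on \(T\fb\) on the left, on \(\fa\) on the right), I would prove by induction on \(n\ge1\) that
\[ r\cdot\Delta^{(n)} =\Delta^{(n)}\cdot\sum_{p+q=n-1} f^{\tens p}\tens r\tens g^{\tens q}, \]
the case \(n=2\) being the defining equation \(r\cdot\Delta=\Delta\cdot(f\tens r+r\tens g)\) and the inductive step using coassociativity together with the cofunctor identities \(f\cdot\Delta=\Delta\cdot(f\tens f)\) and \(g\cdot\Delta=\Delta\cdot(g\tens g)\) to peel off one more tensor factor. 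Post-composing with \(\pr_1^{\tens n}\) and using the identity \(\Delta^{(n)}\cdot\pr_1^{\tens n}=\pr_n\) on \(T\fb\) (the same fact that drives \propref{pro-tensor-ncCat}) replaces \(f\), \(g\), \(r\) by their checked versions and yields
\[ r\cdot\pr_n =\Delta^{(n)}\cdot\sum_{p+q=n-1} \check f^{\tens p}\tens\check r\tens\check g^{\tens q} :\fa\to\fb^{\tens n}. \]
Since \(r=\sum_{n\ge1}(r\cdot\pr_n)\cdot\inj_n\), the component \(r\cdot\pr_0\) being forced to vanish by counitality (the sum above is empty for \(n=0\)), this shows that \(r\) is completely determined by \(\check r\).

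For surjectivity I would take an arbitrary collection \(\check r\) of degree\n-$d$ morphisms and define \(r\) by the displayed formula for \(r\cdot\pr_n\), summed over \(n\ge1\). The sum is well defined because \(\fa\) is conilpotent: by \propref{pro-tensor-ncCat} one has \(\eta\cdot\check f=\eta\cdot\check g=0\), so every factor to which \(\check f\) or \(\check g\) is applied must land in \(\bar\fa\), whence \(\bar\Delta^{(n)}\) applied to a fixed element vanishes for \(n\) large and only finitely many summands survive --- exactly as in the cofunctor case. It then remains to verify that this \(r\) satisfies \(r\cdot\Delta=\Delta\cdot(f\tens r+r\tens g)\), which I would do by projecting both sides onto \(\fb^{\tens k}\tens\fb^{\tens l}\) for all \(k,l\ge0\) and comparing, using the cut comultiplication of \(T\fb\) and coassociativity of \(\Delta\) on \(\fa\).

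The main obstacle is this final verification. The single \(\check r\)\n-factor on the left-hand side \(r\cdot\Delta\) can fall either before or after the cut, and one must check that these two alternatives reproduce exactly the \(f\tens r\) and \(r\tens g\) terms on the right, while keeping track of the Koszul signs \((-1)^{d(|x_1|+\dots+|x_p|)}\) generated when the degree\n-$d$ map \(r\) is tensored past the degree\n-$0$ maps \(f\) lying to its left, according to the symmetry \(\tau\) in the definition following \eqref{eq-f1tdtfn}. Everything else is routine, resting on the structure of \(T\fb\) recalled in \exaref{exa-T-conilpotent-cocategory} and on \propref{pro-tensor-ncCat}.
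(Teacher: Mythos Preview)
Your proposal is correct and follows essentially the same approach as the paper: both reconstruct \(r\) via the iterated coderivation identity \(r\cdot\Delta^{(k)}=\Delta^{(k)}\cdot\sum_{q+1+t=k}f^{\tens q}\tens r\tens g^{\tens t}\), project with \(\pr_1^{\tens k}\), and invoke conilpotency to make the resulting sum finite on each element.

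The one refinement worth noting is that the paper packages the entire reconstruction as the closed form
\[
r=\Delta^{(3)}\cdot(f\tens\check r\tens g)\cdot\mu_{T\fb}^{(3)},
\]
which immediately makes sense (no convergence argument needed for this expression) and which, in the filtered analogue later in the paper, reduces your ``main obstacle'' to a three-line Sweedler computation: one just uses \(f\cdot\Delta=\Delta\cdot(f\tens f)\), \(g\cdot\Delta=\Delta\cdot(g\tens g)\), and coassociativity, without ever projecting onto individual \(\fb^{\tens k}\tens\fb^{\tens l}\) summands or tracking Koszul signs by hand. Your componentwise verification would also work, but the closed form is cleaner.
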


\begin{proof}
The commutative diagram
\begin{diagram}[width=6em]
&&T\fb &&
\\
&\ruTTo^r &\dTTo<{\Delta^{(k)}} &\rdTTo^{\pr_k} &
\\
\fa &&(T\fb)^{\tens k} &\rTTo^{\hspace*{-1em}\pr_1^{\tens k}} 
&\fb^{\tens k}
\\
& \rdTTo<{\Delta^{(k)}} 
&\uTTo~{\sum_{q+1+t=k}f^{\tens q}\tens r\tens g^{\tens t}} 
&\ruTTo_{\sum_{q+1+t=k}\check{f}^{\tens q}\tens\check{r}\tens\check{g}^{\tens t}} &
\\
&&\fa^{\tens k} &&
\end{diagram}
shows that $r$ is given by the formula
\begin{equation}
r =\sum_{k\ge1} \Delta^{(k)} \cdot 
\sum_{q+1+t=k}\check{f}^{\tens q}\tens\check{r}\tens\check{g}^{\tens t} 
=\Delta^{(3)}\cdot(f\tens\check{r}\tens g).
\label{eq-r=sum}
\end{equation}
The first expression makes sense, since in each term of decomposition of
\(\Delta^{(k)}\) there are factors of \(\bar\Delta^{(n)}\) and $k-n$ 
unit morphisms (elements \(\eta(1)\)).
The maps \(\check f\) and \(\check g\) vanish on the latter, hence, if 
\(\bar\Delta^{(k-1)}(x)=0\), then $k$\n-th term of \eqref{eq-r=sum} 
vanishes on $x$.
The second expression obviously makes sense.
\end{proof}

Let \(\fa,\fb\in\acCat\).
The coderivation quiver \(\Coder(\fa,\fb)\) has augmentation preserving 
cofunctors \(f:\fa\to\fb\) as objects and the $d$\n-th component of the 
graded $\Lambda$\n-module \(\Coder(\fa,\fb)(f,g)\) consists of 
coderivations \(r:f\to g:\fa\to\fb\) of degree $d$.
Notice that in \cite{Lyu-AinfCat} the notation \(\Coder(A,B)\) was used 
as a shorthand for \(\Coder(TA,TB)\) for $\cv$\n-quivers $A$ and $B$.

Let \(\phi:\fa\boxt\fc\to\fb\) be a cocategory homomorphism of degree 0.
By definition the homomorphism $\phi$ satisfies the equation
\begin{diagram}
\fa\boxt\fc &\rTTo^\phi &\fb &\rTTo^\Delta &\fb\tens\fb 
\\
\dTTo<{\Delta\boxt\Delta} &&= &&\uTTo>{\phi\tens\phi} 
\\
(\fa\tens\fa)\boxt(\fc\tens\fc) &&\rTTo^{1\tens c\tens1} 
&&(\fa\boxt\fc)\tens(\fa\boxt\fc)
\end{diagram}
Let \(c\in\fc^n\) (in the next several paragraphs $c$ does not mean the 
symmetry).
Introduce \(c\chi:\fa\to\fb\in\und{\cv\Quiv}^n\) by the formula 
\(a(c\chi)=(a\boxt c)\phi\).
Then the above equation is equivalent to
\begin{equation}
a(c\chi)\Delta =a\Delta(c_{(1)}\chi\tens c_{(2)}\chi).
\label{eq-a-chi-Delta}
\end{equation}
Another equation satisfied by $\phi$ is counitality: 
\((a\boxt c)\phi\eps\equiv a(c\chi)\eps=(a\eps)(c\eps)\).

Assume that \(\fa,\fb\in\coCat\) and \(\fc\in\acCat\).
Given a cofunctor \(\phi:\fa\boxt\fc\to\fb\) and an object \(C\in\fc\) 
there is a cofunctor \((\_\boxt C)\phi:\fa\to\fb\), which acts on 
objects as \((\_\boxt C)\phi:\Ob\fa\to\Ob\fb\), 
\(A\mapsto(A\boxt C)\phi\) and on morphisms as 
\((\_\boxt1_C\eta)\phi:\fa(A',A'')\to\fb((A'\boxt C)\phi,(A''\boxt C)\phi)\), 
\(a\mapsto(a\boxt1_C\eta)\phi\), where \((1_C)\eta\in\fc(C,C)\), 
\(1_C=1\in(\1\fc)(C,C)=\Lambda\).
If, furthermore, \(\fa\) and \(\fb\) are augmented and $\phi$ preserves 
augmentation, so does \((\_\boxt C)\phi\) for any \(C\in\Ob\fc\).

Assume that \(c\in\cc(f,g)^d\) satisfies 
\(c\Delta=1_f\eta\tens c+c\tens1_g\eta\).
Then the collection 
\(\xi:\fa(A',A'')\to\fb((A'\boxt f)\phi,(A''\boxt g)\phi)\), 
\(a\mapsto(a\boxt c)\phi\), is a 
\(((\_\boxt f)\phi,(\_\boxt g)\phi)\)-coderivation of degree $d$.

\subsection{Evaluation}\label{sec-Evaluation}
Let $\fa$ be a conilpotent cocategory and let $\fb$ be a $\cv$\n-quiver.
Define the evaluation cofunctor 
\(\phi=\ev:\fa\boxt T\Coder(\fa,T\fb)\to T\fb\) on objects as 
\(\ev(A\boxt f)=fA\), and on morphisms by the corresponding $\chi$.
Let \(f^0,f^1,\dots,f^n:\fa\to T\fb\) be cofunctors, and let 
\(r^1,\dots,r^n\) be coderivations of certain degrees as in 
\(f^0\rto{r^1} f^1\rto{r^2} \dots f^{n-1}\rto{r^n} f^n:\fa\to T\fb\), 
$n\ge0$.
Then \(c=r^1\tdt r^n\in T^n\Coder(\fa,T\fb)(f^0,f^n)\).
Define \((a\boxt c)\ev=a.(c\chi)\) as
\begin{equation}
(a\boxt(r^1\tdt r^n))\ev 
=(a\Delta^{(2n+1)})(f^0\tens\check r^1\tens f^1\tens\check r^2\tdt 
f^{n-1}\tens\check r^n\tens f^n)\mu_{T\fb}^{(2n+1)}.
\label{eq-ar1rnev}
\end{equation}
The right hand side belongs to 
\((T\fb)^{\tens(2n+1)}\mu_{T\fb}^{(2n+1)}\) and is mapped by 
multiplication \(\mu_{T\fb}^{(2n+1)}\) into \(T\fb\).
In particular, for $n=1$ we have \((a\boxt r^1)\ev=(a)r^1\) due 
to \eqref{eq-r=sum}.
In order to see that $\ev$ is a cofunctor we verify 
\eqref{eq-a-chi-Delta}:
\begin{align*}
&a.(c\chi)\Delta 
=a\Delta^{(2n+1)}(f^0\tens\check r^1\tens f^1\tens\check r^2\tdt 
f^{n-1}\tens\check r^n\tens f^n)\mu_{T\fb}^{(2n+1)}\Delta
\\
&= a\Delta^{(2n+1)}\sum_{m=0}^n(f^0\tens\check r^1\tdt 
f^{m-1}\tens\check r^m\tens f^m\Delta\tens\check r^{m+1}\tens 
f^{m+1}\tdt\check r^n\tens f^n)
\\
&\hspace{25em} (\mu_{T\fb}^{(2m+1)}\bigotimes\mu_{T\fb}^{(2n-2m+1)})
\\
&= a\Delta^{(2n+1)}\sum_{m=0}^n\bigl(f^0\tens\check r^1\tdt 
f^{m-1}\tens\check r^m\tens\Delta(f^m\bigotimes f^m)
\\
&\hspace{11em} \tens\check r^{m+1}\tens f^{m+1}\tdt\check r^n\tens 
f^n\bigr) (\mu_{T\fb}^{(2m+1)}\bigotimes\mu_{T\fb}^{(2n-2m+1)})
\\
&= a\Delta\sum_{m=0}^n(\Delta^{(2m+1)}\bigotimes\Delta^{(2n-2m+1)}) 
\bigl[(f^0\tens\check r^1\tdt f^{m-1}\tens\check r^m\tens f^m)
\mu_{T\fb}^{(2m+1)}
\\
&\hspace{13em} \bigotimes(f^m\tens\check r^{m+1}\tens 
f^{m+1}\tdt\check r^n\tens f^n)\mu_{T\fb}^{(2n-2m+1)}\bigr]
\\
&= a\Delta(c_{(1)}\chi\bigotimes c_{(2)}\chi).
\end{align*}
Here we have used $\tens$ for product in the tensor quiver $T\fb$ 
and $\bigotimes$ for \(T\fb\bigotimes T\fb\).
The counitality equation for $\ev$ has to be proven for $n=0$, 
where it reduces to counitality of $f^0$.
The cofunctor $\ev$ preserves the augmentation, since all 
\(f\in\Ob\Coder(\fa,T\fb)\) do.

The following is a version of Proposition~3.4 of \cite{Lyu-AinfCat}.

\begin{proposition}\label{prop-psi-ev}
For \(\fa\in\ncCat\), \(\fb,\fc^1,\dots\fc^q\in\cv\Quiv\) with notation 
\(\fc=T\fc^1\bdb T\fc^q\) the map
\begin{align*}
\ncCat(\fc,T\Coder(\fa,T\fb)) &\longrightarrow \ncCat(\fa\boxt\fc,T\fb),
\\
\psi &\longmapsto \bigl( \fa\boxt\fc \rTTo^{\fa\boxt\psi} 
\fa\boxt T\Coder(\fa,T\fb) \rto{\ev} T\fb \bigr) \notag
\end{align*}
is a bijection.
\end{proposition}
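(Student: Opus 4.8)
The plan is to prove bijectivity by translating both hom-sets into their ``first component'' data via the universal properties already established, and then checking that the forward map $\psi\mapsto\ev\circ(\fa\boxt\psi)$ becomes a transparent identification of these data. (Well-definedness is immediate: $\ev$ is a cofunctor, as verified before the statement, and $\fa\boxt\psi$ is a cofunctor, so the composite is one.) First I record that $\fc=T\fc^1\bdb T\fc^q$ lies in $\ncCat$, being a $\boxt$\n-product of the conilpotent cocategories $T\fc^i$ (\exaref{exa-T-conilpotent-cocategory}, \propref{pro-nilpotent-full-monoidal-subcategory-augmented}); consequently $\fa\boxt\fc\in\ncCat$ as well, so that \propref{pro-tensor-ncCat} applies to both the source and the target of the map.

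Applying \propref{pro-tensor-ncCat} to the left hand side, a cofunctor $\psi$ is the same as its first component $\check\psi=\psi\cdot\pr_1:\fc\to\Coder(\fa,T\fb)$, a $\cv$\n-quiver morphism with $\eta\cdot\check\psi=0$. Such a $\check\psi$ amounts to an object assignment $C\mapsto f_C:=\Ob\psi(C)\in\ncCat(\fa,T\fb)$ together with, for each pair $C,C'$, a morphism $\overline\fc(C,C')\to\Coder(\fa,T\fb)(f_C,f_{C'})$ whose values are coderivations $f_C\to f_{C'}$. By \propref{pro-coderivations-components-T} each such coderivation is in turn determined by its own first component, so the left hand side is parametrised by the pair: (i) a family $\{f_C\}_{C\in\Ob\fc}$ of cofunctors $\fa\to T\fb$, and (ii) a collection of morphisms $\nu_{C,C'}:\fa(X,Y)\tens\overline\fc(C,C')\to\fb(f_CX,f_{C'}Y)$.

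For the right hand side I again invoke \propref{pro-tensor-ncCat}: a cofunctor $\Phi:\fa\boxt\fc\to T\fb$ is determined by $\check\Phi=\Phi\cdot\pr_1:\fa\boxt\fc\to\fb$ with $\eta\cdot\check\Phi=0$. Splitting $\fc=\1\fc\oplus\overline\fc$ and reading off the object map, $\check\Phi$ decomposes into precisely the same data (i) and (ii): its restriction to $\fa\boxt\1\fc$ yields the first components $\check f_C$ (hence the $f_C$ by \propref{pro-tensor-ncCat}), while its restriction to $\fa\boxt\overline\fc$ yields the $\nu_{C,C'}$; these three pieces are independent, so this is a bijection. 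The central computation is that, when $\Phi=\ev\circ(\fa\boxt\psi)$, composition with $\pr_1$ annihilates all but the lowest two tensor degrees of $c\psi\in T\Coder(\fa,T\fb)$: by formula \eqref{eq-ar1rnev} a summand $r^1\tdt r^n$ of $c\psi$ contributes, after $\mu_{T\fb}^{(2n+1)}$, a term in $T^m\fb$ with $m\ge n$, since each $\check r^j$ lands in $\fb=T^1\fb$. Hence only $n=0$ and $n=1$ survive $\pr_1$. Because $\psi$ preserves the augmentation, $\psi(1_C\eta)=1_{f_C}\eta$, and the $n=0$ term recovers $\check f_C$ via $(a\boxt1_{f_C}\eta)\ev=(a)f_C$; for $c\in\overline\fc$ the $n=0$ term vanishes (as $c\cdot\eps=0$) and the $n=1$ term gives $(a\boxt\check\psi(c))\ev\cdot\pr_1=(a)\check\psi(c)\cdot\pr_1$, which is exactly $\nu_{C,C'}$.

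Read through the two universal-property identifications, the map $\psi\mapsto\ev\circ(\fa\boxt\psi)$ is therefore the identity on the common parametrising data $(\{f_C\},\{\nu_{C,C'}\})$, and in particular a bijection. The main obstacle is the degree-counting step of the previous paragraph — establishing that $\ev\cdot\pr_1$ sees only the grouplike and the once-coderivation parts of $c\psi$ — together with the careful bookkeeping of object maps in the $\1\fc\oplus\overline\fc$ splitting. Once these are in place, the freeness of the coderivation components (\propref{pro-coderivations-components-T}) and of the quiver data (\propref{pro-tensor-ncCat}) guarantees that no further relations intervene, so the coordinate-wise identity indeed yields a bijection of the two hom-sets.
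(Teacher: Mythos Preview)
Your argument is correct and takes a genuinely different route from the paper's. The paper constructs the inverse to the map explicitly: given $\phi$, it recursively solves
\[
(a)(c)\check\psi = (a\boxt c)\phi - \sum_{k\ge2}(a\boxt c\bar\Delta^{(k)}\check\psi^{\tens k})\ev
\]
for $\check\psi$, using the particular form $\fc=T\fc^1\bdb T\fc^q$ so that the component $\psi_{i_1,\dots,i_q}$ is expressed through components with strictly smaller multi-index. The bulk of the paper's work is then the verification that the resulting $c\check\psi$ is indeed an $(f_C,f_{C'})$\n-coderivation, carried out by a direct manipulation of the comultiplication identity.

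Your approach instead factors both hom-sets through the common parametrising data $(\{f_C\},\{\nu_{C,C'}\})$ via \propref{pro-tensor-ncCat} and \propref{pro-coderivations-components-T}, and shows the forward map becomes the identity on this data by the degree-counting observation that $\ev\cdot\pr_1$ kills the $T^{\ge2}\Coder$ contributions. This is cleaner: no recursion, and the coderivation property never needs to be checked because it is built into the LHS parametrisation. As a bonus, your argument makes no use of the specific tensor-quiver form of $\fc$ beyond the fact that $\fc\in\ncCat$, so it actually proves the statement for arbitrary conilpotent $\fc$. What the paper's route buys in exchange is the explicit recursive formula for $\check\psi$ in terms of $\phi$, which is useful for concrete computations (and it is this explicit form that is re-used in the filtered version, \thmref{thm-Ev-psi}).
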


We give a new

\begin{proof}
An augmentation preserving cofunctor \(\psi:\fc\to T\Coder(\fa,T\fb)\) 
is described by an arbitrary quiver map
\(\check{\psi}=\psi\cdot\pr_1:\fc\to\Coder(\fa,T\fb)\in\Lambda\modulc_\LL\Quiv\)
such that \(\eta\cdot\check{\psi}=0\).
Let \(\phi:\fa\boxt\fc\to T\fb\) be an augmentation preserving cofunctor.
We have to satisfy the equation
\[ \sum_{k\ge0}(a\boxt c\Delta^{(k)}\check{\psi}^{\tens k})\ev 
=(a\boxt c)\phi, \qquad a\in\fa^\bull, \ c\in\fc^\bull.
\]
It suffices to consider two cases.
In the first one \(c=\eta(1_g)\) for some \(g\in\Ob\fc\).
Then the equation takes the form \((a)(g\psi)=(a\boxt c)\phi\) which 
defines the cofunctor \(g\psi\in\ncCat(\fa,T\fb)\) in the left hand side.

In the second case \(c\in\bar{\fc}^d\), \(d\in\ZZ\), the equation takes 
the form
\begin{equation*}
(a)(c)\check{\psi} 
+\sum_{k\ge2}(a\boxt c\Delta^{(k)}\check{\psi}^{\tens k})\ev
=(a\boxt c)\phi, \qquad a\in\fa^\bull, \ c\in\bar{\fc}^\bull.
\end{equation*}
Since \(\eta\cdot\check{\psi}=0\) the comultiplication $\Delta$ can be 
replaced with $\bar{\Delta}$.
The structure of \(\fc=T\fc^1\bdb T\fc^q\) is such that the component 
\(\psi_{i_1,\dots,i_q}\) in the left hand side of 
\begin{equation}
(a)(c)\check{\psi} =(a\boxt c)\phi 
-\sum_{k\ge2}(a\boxt c\bar{\Delta}^{(k)}\check{\psi}^{\tens k})\ev, 
\qquad a\in\fa^\bull, \ c\in\bar{\fc}^\bull.
\label{eq-acpsi-acphi}
\end{equation}
is expressed via the components \(\psi_{j_1,\dots,j_q}\) with smaller 
indices \((j_1,\dots,j_q)\) in the product poset \(\NN^q\).
For \(c\in\bar{\fc}(X,Y)^d\), \(X=(X_1,\dots,X_q)\), 
\(Y=(Y_1,\dots,Y_q)\), \(X_i,Y_i\in\Ob\fc^i\), find $n\ge0$ such that 
\(c\bar{\Delta}^{(n+1)}=0\).
Equation \eqref{eq-acpsi-acphi} determines a unique collection of maps
\(c\check{\psi}\in\und{\Lambda\modul}\bigl(\fa(U,V),\wh{T\fb}((U,X)\phi,(V,Y)\phi)\bigr)^d\).
It remains to verify that it is a coderivation.
We have to prove that
\[ (a)(c\check{\psi})\Delta_\fb 
=(a)\Delta_\fa[(\_\boxt X)\phi\tens(\_)(c\check{\psi}) 
+(\_)(c\check{\psi})\tens(\_\boxt Y)\phi].
\]
The case $n=0$ being obvious, assume that $n\ge1$.
The sum in \eqref{eq-acpsi-acphi} goes from $k=2$ to $n$.
Correspondingly,
\[ (a)(c\check{\psi})\Delta 
=(a\Delta)[(\_\boxt c_{(1)})\phi\tens(\_\boxt c_{(2)})\phi]
-\sum_{k=2}^n [(a\Delta)\boxt(c_{\bar1}\check{\psi}\tdt 
c_{\bar k}\check{\psi})\Delta_{T\Coder}]\tau_{(23)}(\ev\tens\ev).
\]
Here according to Sweedler's notation \(c_{(1)}\tens c_{(2)}=c\Delta\).
Similarly, \(c_{\bar1}\tdt c_{\bar k}=c\bar{\Delta}^{(k)}\).
Recall the middle four interchange 
\([(a\tens b)\boxt(c\tens d)]\tau_{(23)}=(-1)^{bc}(a\boxt c)\tens(b\boxt d)\).
The above expression has to be equal to
\begin{multline*}
(a\Delta)\Bigl\{(\_\boxt1_{X})\phi\tens\bigl[(\_\boxt c)\phi -\sum_{k=2}^n\bigl(\_\boxt(c_{\bar1}\check{\psi}\tdt 
c_{\bar k}\check{\psi})\bigr)\ev \bigr]\Bigr\}
\\
+(a\Delta)\Bigl\{\bigl[(\_\boxt c)\phi 
-\sum_{k=2}^n\bigl(\_\boxt(c_{\bar1}\check{\psi}\tdt 
c_{\bar k}\check{\psi})\bigr)\ev\bigr] \tens(\_\boxt1_{Y})\phi\Bigr\}.
\end{multline*}
Canceling the above terms we come to identity to be checked
\begin{equation}
(a\Delta)[(\_\boxt c_{\bar1})\phi\tens(\_\boxt c_{\bar2})\phi] 
=\sum_{k=2}^n \bigl[(a\Delta)\boxt(c_{\bar1}\check{\psi}\tdt 
c_{\bar k}\check{\psi})\bar\Delta_{T\Coder}\bigr]\tau_{(23)}(\ev\tens\ev).
\label{eq-c1phi-c2phi}
\end{equation}
The right hand side equals
\begin{align*}
&\sum_{k=2}^n\sum_{i=1}^{k-1} \Bigl\{(a\Delta)\boxt
\bigl[(c_{\bar1}\check{\psi}\tdt c_{\bar i}\check{\psi})
\bigotimes(c_{\overline{i+1}}\check{\psi}\tdt c_{\bar k}\check{\psi}) 
\bigr]\Bigr\}\tau_{(23)}(\ev\tens\ev) \notag
\\
&=\sum_{k=2}^n\sum_{i=1}^{k-1} (a\Delta) \Bigl\{
\bigl[\_\boxt(c_{\bar1}\check{\psi}\tdt c_{\bar i}\check{\psi})\bigl]\ev
\tens \bigl[\_\boxt(c_{\overline{i+1}}\check{\psi}\tdt 
c_{\bar k}\check{\psi})\bigr]\ev \Bigr\} \notag
\\
&=\sum_{i=1}^n\sum_{j=1}^n (a\Delta) \Bigl\{
\bigl[\_\boxt(c_{\bar1}\check{\psi}\tdt c_{\bar i}\check{\psi})\bigl]\ev
\tens \bigl[\_\boxt(c_{\overline{i+1}}\check{\psi}\tdt 
c_{\overline{i+j}}\check{\psi})\bigr]\ev \Bigr\} \notag
\\
&=(a\Delta)[(\_)(c_{\bar1}F)\tens(\_)(c_{\bar2}F)],
\end{align*}
where
\[ (a)(cF) =\sum_{i=1}^n\bigl[a\boxt(c_{\bar1}\check{\psi}\tdt 
c_{\bar i}\check{\psi})\bigl]\ev =(a)(c\check{\psi}) 
+\sum_{i=2}^n\bigl[a\boxt(c_{\bar1}\check{\psi}\tdt 
c_{\bar i}\check{\psi})\bigl]\ev =(a\boxt c)\phi
\]
due to \eqref{eq-acpsi-acphi}.
Hence the right hand side of \eqref{eq-c1phi-c2phi} equals 
\((a\Delta)[(\_\boxt c_{\bar1})\phi\tens(\_\boxt c_{\bar2})\phi]\), 
which is the left hand side of \eqref{eq-c1phi-c2phi}.
\end{proof}

Let $\fa$ be a conilpotent cocategory and let $\fb$, $\fc$ be quivers.
Consider the cofunctor given by the upper right path in the diagram
\begin{diagram}
	\fa\boxt T\Coder(\fa,T\fb)\boxt T\Coder(T\fb,T\fc) & 
	\rTTo^{\ev\boxt1} & T\fb\boxt T\Coder(T\fb,T\fc)
	\\
	\dTTo<{1\boxt M} & = & \dTTo>\ev
	\\
	\fa\boxt T\Coder(\fa,T\fc) & \rTTo^\ev & T\fc
\end{diagram}
By \propref{prop-psi-ev} there is a unique augmentation preserving 
cofunctor
\[ M: T\Coder(\fa,T\fb)\boxt T\Coder(T\fb,T\fc) \to T\Coder(\fa,T\fc).
\]

Denote by $\1$ the unit object $\boxt^0$ of the monoidal category of 
nilpotent cocategories, that is, $\Ob\1=\{*\}$, $\1(*,*)=\Lambda$.
Denote by $\rightunit:\fa\boxt\1\to\fa$ and 
$\leftunit:\1\boxt\fa\to\fa$ the corresponding natural isomorphisms.
By \propref{prop-psi-ev} there exists a unique augmentation preserving 
cofunctor $\eta_{T\fb}:\1\to T\Coder(T\fb,T\fb)$, such that
\[ \rightunit = \bigl(T\fb\boxt\1 \rTTo^{1\boxt\eta_{T\fb}} 
T\fb\boxt T\Coder(T\fb,T\fb) \rTTo^\ev T\fb\bigr).
\]
Namely, the object $*\in\Ob\1$ goes to the identity homomorphism 
$\id_{T\fb}:T\fb\to T\fb$.

The following statement (published as \cite[Proposition~4.1]{Lyu-AinfCat})
follows from \propref{prop-psi-ev}.

\begin{proposition}
The multiplication $M$ is associative and $\eta$ is its two-sided unit:
	\begin{diagram}[nobalance]
T\!\Coder(\fa,T\fb)\!\boxt\! T\!\Coder(T\fb,T\fc)\!\boxt\! T\!\Coder(T\fc,T\fd)
& \rTTo^{M\boxt1} & T\!\Coder(\fa,T\fc)\!\boxt\! T\!\Coder(T\fc,T\fd) 
		\\
		\dTTo<{1\boxt M} && \dTTo>M 
		\\
T\!\Coder(\fa,T\fb)\!\boxt\! T\!\Coder(T\fb,T\fd) &\rTTo^M	
&T\!\Coder(\fa,T\fd)
	\end{diagram}
\end{proposition}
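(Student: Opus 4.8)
The plan is to derive both assertions from the injectivity half of the bijection in \propref{prop-psi-ev}. Abbreviate $A=T\Coder(\fa,T\fb)$, $B=T\Coder(T\fb,T\fc)$, $C=T\Coder(T\fc,T\fd)$, so that the source of the associativity square is the $\boxt$-product $A\boxt B\boxt C$ of three tensor cocategories and its target is $T\Coder(\fa,T\fd)$. Applying \propref{prop-psi-ev} with conilpotent cocategory $\fa$, target quiver $\fd$, and $\fc:=A\boxt B\boxt C$ (so $q=3$, with $\fc^1=\Coder(\fa,T\fb)$, $\fc^2=\Coder(T\fb,T\fc)$, $\fc^3=\Coder(T\fc,T\fd)$), two cofunctors $A\boxt B\boxt C\to T\Coder(\fa,T\fd)$ coincide as soon as they agree after the operation $\psi\mapsto(\fa\boxt\psi)\cdot\ev$. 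Thus it suffices to check that the images of $(M\boxt1)\cdot M$ and $(1\boxt M)\cdot M$ under this operation agree as cofunctors $\fa\boxt A\boxt B\boxt C\to T\fd$.

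The sole computational device is the defining square of $M$, valid for each of its instances, which reads $(1\boxt M)\cdot\ev=(\ev\boxt1)\cdot\ev$. For the left composite I would expand $(\fa\boxt[(M\boxt1)\cdot M])\cdot\ev$ as $(\fa\boxt(M\boxt1))\cdot(\fa\boxt M)\cdot\ev$, push the final $\ev$ through the outer $M$ (instance $\fa,T\fc,T\fd$) and then through the inner $M$ (instance $\fa,T\fb,T\fc$), each time replacing $(1\boxt M)\cdot\ev$ by $(\ev\boxt1)\cdot\ev$. This rewrites the left composite as the threefold evaluation
\[
\fa\boxt A\boxt B\boxt C \rto{\ev\boxt1\boxt1} T\fb\boxt B\boxt C \rto{\ev\boxt1} T\fc\boxt C \rto{\ev} T\fd .
\]
Carrying out the same expansion on $(\fa\boxt[(1\boxt M)\cdot M])\cdot\ev$, now through the instances $\fa,T\fb,T\fd$ and $T\fb,T\fc,T\fd$ of the defining square (and using that $\ev$ on $\fa\boxt A$ commutes with $M$ on $B\boxt C$, since they act on disjoint $\boxt$-factors), rewrites the right composite as the identical threefold evaluation. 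Injectivity in \propref{prop-psi-ev} then gives $(M\boxt1)\cdot M=(1\boxt M)\cdot M$.

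The unit laws follow by the same scheme. For the right unit I would compare $(1\boxt\eta_{T\fb})\cdot M$ with $\rightunit$ as cofunctors $T\Coder(\fa,T\fb)\boxt\1\to T\Coder(\fa,T\fb)$ (the unit object $\1$ being itself a tensor cocategory, \propref{prop-psi-ev} still applies); passing to images, the defining square of $M$ (instance $\fa,T\fb,T\fb$) turns $(\fa\boxt(1\boxt\eta_{T\fb}))\cdot(\fa\boxt M)\cdot\ev$ into $(\ev\boxt1)\cdot(1\boxt\eta_{T\fb})\cdot\ev$, whereupon the defining property $\rightunit=(1\boxt\eta_{T\fb})\cdot\ev$ of $\eta_{T\fb}$ collapses this, by naturality of $\rightunit$, to the image $\rightunit\cdot\ev$ of $\rightunit$. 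The left unit is entirely symmetric: when the source object is a tensor cocategory $T\fa$, so that $\eta_{T\fa}$ is defined, one compares $(\eta_{T\fa}\boxt1)\cdot M$ with $\leftunit$ and uses the defining square (instance $T\fa,T\fa,T\fb$) together with $\rightunit=(1\boxt\eta_{T\fa})\cdot\ev$ and the coherence triangle relating $\rightunit$ and $\leftunit$ at the unit.

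Conceptually the argument is forced; the only real work is bookkeeping. Everything hinges on repeated, correctly indexed use of the defining square of $M$, while the coherence isomorphisms of $\boxt$ (the associators and the middle four interchanges concealed in the $\boxt$-products of cofunctors) must be tracked carefully so that the two reductions land on literally the same threefold evaluation. I expect this coherence bookkeeping, rather than any conceptual difficulty, to be the main obstacle, and it is routine because $\ncCat$ is symmetric monoidal.
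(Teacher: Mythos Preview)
Your proposal is correct and is precisely the argument the paper has in mind: the paper gives no explicit proof beyond the remark that the statement ``follows from \propref{prop-psi-ev}'', and your write-up is exactly the standard unwinding of that remark---reduce both legs of the associativity square (and of each unit triangle) to the common iterated evaluation using the defining square of $M$ and the defining equation of $\eta$, then invoke the injectivity of $\psi\mapsto(\fa\boxt\psi)\cdot\ev$.
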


The multiplication $M$ is computed explicitly in 
\cite[\S4]{Lyu-AinfCat}, see, in particular, Examples~4.2 there.

\section{Filtered cocategories}
Let $\LL$ be a partially ordered commutative monoid with the operation 
$+$ and neutral element 0.
Of course, we assume that \(a\le b\), \(c\le d\) imply \(a+c\le b+d\).
The subsets
\begin{align*}
\LL_+ &=\{l\in\LL\mid l\ge0\},
\\
\LL_- &=\{l\in\LL\mid l\le0\}
\end{align*}
are submonoids.
Clearly, \(\LL_+\cap\LL_-=\{0\}\).
We require that
\[ \LL_{++} =\{l\in\LL\mid l>0\} =\LL_+-{0}
\]
were non-empty.
We assume that $\LL$ satisfies the following conditions:
\begin{enumerate}
\renewcommand{\labelenumi}{(\roman{enumi})}
\item for all $a,b\in\LL$ there is $c\in\LL$ such that $a\le c$, 
$b\le c$ (that is, \((\LL,\le)\) is directed);

\item for all $a,b\in\LL$ there is $c\in\LL$ such that $c\le a$, 
$c\le b$ (that is, \(\LL^\op\) is directed);
		
\item for all $a,b\in\LL$ there is $c\in\LL$ such that $a+c\ge b$.	
\end{enumerate}
This generalizes the assumptions of \cite{MR3856926}.
If $\LL$ is a directed group (satisfies (i)), then $\LL$ satisfies (ii) 
and (iii) as well for obvious reasons.

The symmetric monoidal category of $\ZZ$\n-graded abelian groups 
(with the usual signed symmetry) is denoted $\grAb$.
$\LL$\n-filtered graded abelian group is a $\ZZ$\n-graded abelian group 
$M$ together with, for every \(l\in\LL\), a graded subgroup $\cf^lM$ 
such that \(a\le b\in\LL\) implies that \(\cf^aM\supset\cf^bM\) and 
\(\cup_{l\in\LL}\cf^lM=M\).
The symmetric multicategory \(\wh{\grAb_\LL}\) of $\LL$\n-filtered 
graded abelian groups is formed by polylinear maps of certain degree 
preserving the filtration:
\begin{align*}
\wh{\grAb_\LL}(M_1,\dots,M_n;N)^d = \{(\text{polylinear maps }f: 
M_1^{k_1}\times\dots\times{}&M_n^{k_n}\to 
N^{k_1+\dots+k_n+d})_{k_i\in\ZZ} \mid 
\\
\mid (\cf^{l_1}M_1^{k_1}\times\dots\times\cf^{l_n}M_n^{k_n})f 
&\subset\cf^{l_1+\dots+l_n}N^{k_1+\dots+k_n+d} \},
\end{align*}
$n\ge1$.
The sign for composition is the same as in 
\cite[Example~3.17]{BesLyuMan-book}.
This multicategory is representable 
\cite[Definition~3.23]{BesLyuMan-book} (see also 
\cite[Definition~8.3]{Hermida:MultiRep}) by a symmetric monoidal 
category which we denote \(\grAb_\LL\).
This follows from a similar statement for the closed multicategory 
$\wh{\Ab}$ of abelian groups.
One deduces the tensor product of a family $M_1$, \dots, $M_n$, $n\ge1$,
as the tensor product of $\ZZ$\n-graded abelian groups $M_i$, equipped 
with the filtration \cite[(2)]{MR3856926}
\begin{equation}
\cf^l(\tens_{i=1}^nM_i)
=\im(\oplus_{l_1+\dots+l_n=l}\tens_{i=1}^n\cf^{l_i}M_i 
\to \tens_{i=1}^nM_i).
\label{eq-Fl(xi1nMi)}
\end{equation}
Thus, \(\wh{\grAb_\LL}(M_1,\dots,M_n;N)\) is naturally isomorphic to 
\(\grAb_\LL(M_1\tdt M_n,N)\) for $n\ge1$ (more in 
\cite[Theorem~3.24]{BesLyuMan-book}).
The unit object is $\ZZ$, concentrated in degree 0, equipped with 
the filtration
\[ \cf^l\ZZ =
\begin{cases}
\ZZ, &\text{ if }l\le0,
\\
0, &\text{ otherwise}.
\end{cases}
\]
We define \(\wh{\grAb_\LL}(;N)\) as \(\grAb_\LL(\ZZ,N)\) in order 
to keep representability.

The monoidal category \(\grAb_\LL\) is symmetric with the signed 
symmetry of $\ZZ$\n-graded abelian groups.
Furthermore, it is closed.
In fact, let \(M,N\in \grAb_\LL\).
Associate with them a new graded $\LL$\n-filtered abelian group 
\(\und{\grAb_\LL}(M,N)\) with
\[ \cf^l\und{\grAb_\LL}(M,N)^d =\{f\in\und\grAb(M,N)^d \mid 
\forall\lambda\in\LL \; \forall k\in\ZZ \; 
(\cf^\lambda M^k)f \subset \cf^{\lambda+l}N^{k+d} \},
\]
the inner hom.
The evaluation
\[ \ev: M\tens\und{\grAb_\LL}(M,N) \to N, \quad m\tens f \mapsto (m)f,
\]
is a morphism of \(\grAb_\LL\), and it turns \(\grAb_\LL\) into 
a closed symmetric monoidal category.
Indeed, let \(\phi:M\tens P\to N\in\grAb_\LL\).
To any \(p\in P^d\), $d\in\ZZ$, assign a degree $d$ map 
\(\psi(p):M\to N\), \(m\mapsto\phi(m\tens p)\).
If \(p\in\cf^lP^d\), then \(\psi(p)\in\cf^l\und{\grAb_\LL}(M,N)^d\).
Hence a map \(\psi:P\to\und{\grAb_\LL}(M,N)\in\grAb_\LL\) such that
\begin{diagram}
M\tens P &&\rTTo^\phi &&N
\\
\dTTo<{1\tens\psi} &\!\!= &&\ruTTo(4,2)>\ev
\\
M\tens\und{\grAb_\LL}(M,N) &&&&.
\end{diagram}
Vice versa, given $\psi$ one obtains $\phi$ as the composition 
\((1\tens\psi)\cdot\ev\).
The two maps \(\phi\longleftrightarrow\psi\) are inverse to each other, 
and \(\grAb_\LL\) is closed.

According to \cite[Proposition~4.8]{BesLyuMan-book} the symmetric 
multicategory \(\wh{\grAb_\LL}\) is closed as well.
It is easy to describe the inner hom 
\(\wh{\und{\grAb_\LL}}(M_1,\dots,M_n;N)\in\Ob\wh{\grAb_\LL}\) via
\begin{align*}
\cf^l\wh{\und{\grAb_\LL}}(M_1,\dots,M_n;N)^d 
=\{(\text{polylinear maps }f: M_1^{k_1}\times\dots\times{}&M_n^{k_n}\to 
N^{k_1+\dots+k_n+d})_{k_i\in\ZZ} \mid 
\\
\mid (\cf^{l_1}M_1^{k_1}\times\dots\times\cf^{l_n}M_n^{k_n})f
\subset{}&\cf^{l_1+\dots+l_n+l}N^{k_1+\dots+k_n+d} \}.
\end{align*}
The corresponding evaluation is
\[ \ev: M_1,\dots,M_n,\wh{\und{\grAb_\LL}}(M_1,\dots,M_n;N) \to N, 
\quad (m_1,\dots,m_n,f) \mapsto (m_1,\dots,m_n)f.
\]

A commutative $\LL$\n-filtered graded ring $\Lambda$ is a commutative 
monoid (commutative algebra) in \(\grAb_\LL\).
Modules over $\Lambda$ in \(\grAb_\LL\) are called $\LL$\n-filtered 
$\ZZ$\n-graded $\Lambda$\n-modules and are identified with commutative 
$\Lambda$\n-bimodules (for short, $\Lambda$\n-modules).
In examples of interest (see \exaref{exa-Novikov-ring}) $\Lambda$ is 
$2\ZZ$\n-graded, so the commutativity issues for it are the same as in 
non-graded case.

Due to condition (i) a filtration \((\cf^\lambda M^k)_{\lambda\in\LL}\) 
on the graded $k$\n-th component $M^k$ viewed as a basis of 
neighborhoods of the origin defines a uniform structure on $M^k$ with 
the entourages \(\{(x,y)\in M\times M\mid x-y\in\cf^\lambda M^k\}\).
Standard properties of uniform structures are listed in 
\cite[Chap.~II, \S1, \S2]{zbMATH03395017}.

\begin{proposition}\label{pro-uniformly-continuous}
With the above uniform structure

(a) An element of \(\und{\grAb_\LL}(M,N)^d\) is a family of uniformly 
continuous maps $M^k\to N^{k+d}$.

(b) Each \(f\in\und{\grAb_\LL}(M_1,\dots,M_n;N)^d\) is a family of 
continuous maps 
\(f:M_1^{k_1}\times\dots\times M_n^{k_n}\to N^{k_1+\dots+k_n+d}\), where
\(M_i^{k_i}\), $N^k$ are given the topology, associated with the uniform
structure \cite[Chap.~II, \S1, n. 2, Definition~3]{zbMATH03395017}.

(c) If $\LL=\LL_+$, then each \(f\in\und{\grAb_\LL}(M_1,\dots,M_n;N)^d\)
is a family of uniformly continuous maps 
\(f:M_1^{k_1}\times\dots\times M_n^{k_n}\to N^{k_1+\dots+k_n+d}\).
\end{proposition}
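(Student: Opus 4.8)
The plan is to translate each assertion into a statement about how $f$ sends filtration subgroups into filtration subgroups, and then to feed this into the definition of (uniform) continuity. Recall that a basic entourage of $M^k$ has the form $\{(x,y)\mid x-y\in\cf^\lambda M^k\}$, and similarly for $N^{k+d}$; the essential algebraic fact is that $f$ is additive in each of its arguments, and the essential order-theoretic fact is condition (iii), which lets me swallow a fixed index shift by passing to a large enough filtration level. For (a): an element $f\in\und{\grAb_\LL}(M,N)^d$ lies in $\cf^l\und{\grAb_\LL}(M,N)^d$ for some $l\in\LL$, so $(\cf^\lambda M^k)f\subset\cf^{\lambda+l}N^{k+d}$ for all $\lambda$ and $k$. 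As the restriction of $f$ to $M^k$ is a homomorphism, $f(x)-f(y)=f(x-y)$, and uniform continuity of $M^k\to N^{k+d}$ reduces to: given a target index $\mu$, produce $\lambda$ with $f(\cf^\lambda M^k)\subset\cf^\mu N^{k+d}$. By the displayed inclusion it suffices that $\lambda+l\ge\mu$, and condition (iii) supplies such a $\lambda$ (choose $c$ with $l+c\ge\mu$ and put $\lambda=c$). This is the familiar statement that a filtration-preserving homomorphism of uniform groups is uniformly continuous.

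For (b), I fix a point $(x_1,\dots,x_n)$ and pick $\mu_i\in\LL$ with $x_i\in\cf^{\mu_i}M_i^{k_i}$, possible since the filtration of $M_i^{k_i}$ is exhaustive. Setting $y_i=x_i+h_i$ and using multilinearity,
\[
f(x_1+h_1,\dots,x_n+h_n)-f(x_1,\dots,x_n)
=\sum_{\varnothing\ne S\subset\{1,\dots,n\}}f_S,
\]
where the summand $f_S$ is the value of $f$ with $h_i$ inserted in the slots $i\in S$ and $x_i$ in the remaining slots. If $h_i\in\cf^{\lambda_i}M_i^{k_i}$ then $f_S\in\cf^{e_S}N$ with $e_S=\sum_{i\in S}\lambda_i+\sum_{i\notin S}\mu_i+l$ (here $l$ is the filtration degree of $f$, with $l=0$ when $f$ is a genuine multimorphism). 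Given a target $\nu$, I first enlarge each $\lambda_i$ so that $\lambda_i\ge\mu_i$ (condition (i)); then, by monotonicity of $+$, for any nonempty $S$ and any $j\in S$ one has $e_S\ge e_{\{j\}}$, so it is enough to meet the $n$ singleton constraints $e_{\{j\}}=\lambda_j+\sum_{i\ne j}\mu_i+l\ge\nu$. Each of these is arranged by condition (iii) (take $\lambda_j$ above both $\mu_j$ and the index $c_j$ with $\sum_{i\ne j}\mu_i+l+c_j\ge\nu$). The product neighbourhood determined by these $\lambda_i$ then witnesses continuity at $(x_1,\dots,x_n)$, and the point was arbitrary. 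I expect this combinatorial step to be the main obstacle: a single product neighbourhood must simultaneously control all $2^n-1$ mixed terms, which is why one first forces $\lambda_i\ge\mu_i$ so that the singleton terms become the worst case.

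For (c), the hypothesis $\LL=\LL_+$ makes $0$ the least element of $\LL$, so $\cf^0M_i^{k_i}\supset\cf^\mu M_i^{k_i}$ for every $\mu$ and hence $\cf^0M_i^{k_i}=M_i^{k_i}$. Thus every point lies at depth $0$, and I may take all $\mu_i=0$, now independently of the base point. Telescoping one variable at a time,
\[
f(x)-f(y)=\sum_{j=1}^n f(y_1,\dots,y_{j-1},x_j-y_j,x_{j+1},\dots,x_n),
\]
each summand lies in $\cf^{\lambda_j+l}N$ once $x_j-y_j\in\cf^{\lambda_j}M_j^{k_j}$, because all remaining slots already sit in $\cf^0=M_i^{k_i}$. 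Given $\nu$, condition (iii) furnishes $\lambda_j$ with $\lambda_j+l\ge\nu$, and these indices no longer depend on $x$ or $y$; the corresponding product entourage therefore witnesses uniform continuity. The role of $\LL=\LL_+$ is exactly to collapse the base-point depths $\mu_i$ to $0$, upgrading the pointwise estimate of (b) to a uniform one.
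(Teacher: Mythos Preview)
Your proof is correct and follows essentially the same approach as the paper. The only notable difference is in (b): the paper uses the $n$-term telescoping identity
\[
f(x_1,\dots,x_n)-f(y_1,\dots,y_n)=\sum_{j=1}^n f(y_1,\dots,y_{j-1},x_j-y_j,x_{j+1},\dots,x_n)
\]
(together with the inclusion $x_j\in\cf^{c_j}M_j^{k_j}$ forced by $\lambda_j\ge c_j$), whereas you expand into all $2^n-1$ subset terms and then reduce to the singleton case via $\lambda_i\ge\mu_i$; both arguments arrive at the same singleton constraints and invoke condition~(iii) in the same way.
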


\begin{proof}
(a) Let \(f\in\cf^l\und{\grAb_\LL}(M,N)^d\).
For any \(h\in\LL\) there exists \(\lambda\in\LL\) such that 
\(l+\lambda\ge h\) by condition (iii).
Then for arbitrary points $x,y\in M^k$ such that 
\(x-y\in\cf^\lambda M^k\) we have 
\(f(x)-f(y)=f(x-y)\in\cf^{l+\lambda}N^{k+d}\subset\cf^hN^{k+d}\).

(b) Fix a point 
\((y_1,\dots,y_n)\in M_1^{k_1}\times\dots\times M_n^{k_n}\).
There are $c_i\in\LL$ such that \(y_i\in\cf^{c_i}M_i^{k_i}\).
For an arbitrary \(\lambda\in\LL\) take \(\lambda_i\in\LL\) such that
\[ \lambda_i\ge c_i, \qquad \lambda_i +\sum_{j\neq i}c_j \ge\lambda.
\]
Consider the neighborhood of \(y_i\)
\[ \{x_i\in M_i^{k_i}\mid x_i-y_i\in\cf^{\lambda_i}M_i^{k_i}\} 
\subset \cf^{c_i}M_i^{k_i}.
\]
For $x_i$ from this neighborhood the element \(f(x_1,\dots,x_n)\) is in 
neighborhood of \(f(y_1,\dots,y_n)\), namely,
\begin{multline}
\begin{aligned}
f(x_1,\dots,x_n) -f(y_1,\dots,y_n) = &f(x_1-y_1,x_2,\dots,x_n) 
+f(y_1,x_2-y_2,x_3,\dots,x_n)
\\
&+\dots +f(y_1,\dots,y_{n-1},x_n-y_n) \in
\end{aligned}
\\
\in\cf^{\lambda_1+c_2+\dots+c_n}N^k+\cf^{c_1+\lambda_2+c_2+\dots+c_n}N^k
+\dots +\cf^{c_1+\dots+c_{n-1}+\lambda_n}N^k \subset\cf^\lambda N^k,
\label{eq-f(x1xn)-f(y1yn)}
\end{multline}
where \(k=k_1+\dots+k_n+d\).

(c) For any \(\lambda\in\LL\) and any points 
\((x_1,\dots,x_n),(y_1,\dots,y_n)\in M_1^{k_1}\times\dots\times M_n^{k_n}\)
if \(x_i-y_i\in\cf^\lambda M_i^{k_i}\), \(1\le i\le n\), then 
\(f(x_1,\dots,x_n) -f(y_1,\dots,y_n)\in\cf^\lambda N^{k_1+\dots+k_n+d}\)
similarly to \eqref{eq-f(x1xn)-f(y1yn)}.
\end{proof}

\subsection{Completion of a filtered graded abelian group.}
The notion of complete filtered abelian group is a particular case of a 
complete uniform space 
\cite[Chap.~II, \S3, n.3, Def.~3]{zbMATH03395017}.
There is the notion of separated completion (from now on 
\emph{completion}) $\hat{M}=(\hat M^k)$ of a uniform space $M=(M^k)$ 
\cite[Chap.~II, \S3, n.7, Th.~3]{zbMATH03395017}.
It consists of minimal Cauchy filters on $M=(M^k)$ 
\cite[Chap.~II, \S3, n.2]{zbMATH03395017}.
It is known that any Cauchy filter $F$ contains a unique minimal Cauchy 
filter $\und F$ \cite[Chap.~II, \S3, n.2, Prop.~5]{zbMATH03395017}.
A base of the filter $\und F$ can be obtained as a family 
\(\{A+\cf^\lambda M\mid A\in B,\,\lambda\in\LL\}\), where $B$ is a base 
of the filter $F$.

Consider now the graded abelian group 
\(\wh M=\lim_{\lambda\in\LL^\op}(M/\cf^\lambda M)\) equipped 
with filtration
\begin{equation}
\cf^l\wh M
=\lim_{\lambda\in\LL^\op}((\cf^l M+\cf^\lambda M)/\cf^\lambda M).
\label{eq-FlwhM}
\end{equation}
We understand the first limit as terminal cone on the functor 
\(\LL^\op\to\grAb\), \(\lambda\mapsto M/\cf^\lambda M\).
In our assumptions the non-empty subsemigroup \(\LL_{++}\) is a final 
subset of poset $\LL$.
Hence, 
\(\lim_{\lambda\in\LL^\op}(M/\cf^\lambda M)
=\lim_{\lambda\in\LL_{++}^\op}(M/\cf^\lambda M)\).
We are going to prove that $\wh{M}$ coincides with $\hat{M}$.
Until done we distinguish the two notations.

\begin{proposition}
When $\LL$ satisfies condition (i), the filtered graded abelian group 
$\wh{M}$ is complete.
\end{proposition}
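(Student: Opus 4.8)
The plan is to identify $\wh M$, degreewise, with a projective limit of discrete uniform spaces and then invoke the standard permanence properties of completeness. Completeness of a filtered graded abelian group is a degreewise property of the uniform spaces $\wh M^k$, so I fix $k\in\ZZ$ and work with $\wh M^k=\lim_{\lambda\in\LL^\op}(M^k/\cf^\lambda M^k)$, equipped with the uniform structure whose basic entourages are $U_l=\{(x,y)\mid x-y\in\cf^l\wh M^k\}$, $l\in\LL$, coming from the filtration \eqref{eq-FlwhM}.

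First I would realise $\wh M^k$ as a uniform subspace of the product $P=\prod_{\lambda\in\LL}(M^k/\cf^\lambda M^k)$, each factor carrying the discrete uniformity (for which the diagonal is an entourage, so that each factor is separated and complete). By definition of the limit, $\wh M^k$ is the set of compatible families, that is, the intersection, over all pairs $\lambda\le\mu$, of the equalisers of the two continuous maps $x\mapsto p_\lambda x$ and $x\mapsto p_{\lambda\mu}(p_\mu x)$, where $p_\lambda\colon P\to M^k/\cf^\lambda M^k$ are the projections and $p_{\lambda\mu}$ the transition maps. Since the factors are discrete, hence separated, each such equaliser is closed, so $\wh M^k$ is a closed, separated subspace of $P$.

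The one genuinely nontrivial step, and the only place where hypothesis (i) is used, is to check that the subspace uniformity inherited from $P$ coincides with the filtration uniformity $\{U_l\}$. The subspace uniformity has a base consisting of the entourages cut out by finitely many projections $p_{\lambda_1},\dots,p_{\lambda_n}$; by directedness I may choose $\mu\ge\lambda_1,\dots,\lambda_n$, and then equality of the $\mu$-components already forces equality of the $\lambda_i$-components, so the single-index entourages $V_\mu=\{(x,y)\mid p_\mu x=p_\mu y\}$ form a base. It then remains to prove $\Ker(p_\mu\colon\wh M^k\to M^k/\cf^\mu M^k)=\cf^\mu\wh M^k$, which yields $V_\mu=U_\mu$. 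The inclusion $\cf^\mu\wh M^k\subset\Ker p_\mu$ is clear, since the $\mu$-component of $(\cf^\mu M^k+\cf^\mu M^k)/\cf^\mu M^k$ is zero. For the reverse inclusion, given a compatible family annihilated by $p_\mu$, directedness provides for each $\lambda$ an index $\nu\ge\lambda,\mu$; a representative of its $\nu$-component then lies in $\cf^\mu M^k$ (using $\cf^\nu M^k\subset\cf^\mu M^k$), and projecting down to level $\lambda$ exhibits the $\lambda$-component inside $(\cf^\mu M^k+\cf^\lambda M^k)/\cf^\lambda M^k$, as required by \eqref{eq-FlwhM}.

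Once the two uniformities are identified, completeness is formal: a product of complete uniform spaces is complete and a closed subspace of a complete uniform space is complete \cite[Chap.~II, \S3]{zbMATH03395017}, so $\wh M^k$ is complete (and separated); as $k$ is arbitrary, $\wh M$ is complete. I expect the main obstacle to be exactly the comparison of uniform structures, that is, the identity $\cf^\mu\wh M^k=\Ker p_\mu$; once that directedness argument is in place, the rest is a citation of standard facts about projective limits of uniform spaces.
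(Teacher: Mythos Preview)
Your proof is correct and takes a genuinely different route from the paper's. The paper argues directly with Cauchy nets: given a Cauchy net $x:D\to\wh M$, it picks for each $\lambda\in\LL$ an index $N(\lambda)\in D$ witnessing the Cauchy condition at level $\lambda$, defines the candidate limit by $y_\lambda=x^{N(\lambda)}_\lambda$, and then uses directedness of $\LL$ to verify both that $(y_\lambda)$ is a compatible family and that the net converges to it. Your argument instead recognises $\wh M^k$ as a closed subspace of the product $\prod_\lambda M^k/\cf^\lambda M^k$ of discrete (hence complete) uniform spaces, and reduces everything to the identification $\Ker p_\mu=\cf^\mu\wh M^k$; directedness enters once, to collapse finite-index entourages to single-index ones and to lift along $\nu\ge\lambda,\mu$. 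Your approach is cleaner and makes the permanence under projective limits transparent, at the cost of invoking Bourbaki's standard results on products and closed subspaces; the paper's explicit net construction is more self-contained and dovetails with the subsequent proposition, where the explicit description of elements of $\wh M$ is used to compare $\wh M$ with the minimal-Cauchy-filter completion $\hat M$.
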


\begin{proof}[(Seems known)]
It suffices to look at a graded component of $M$ which we still denote 
$M$.
The definition of completeness can be given also via Cauchy nets, 
namely, we have to prove that any Cauchy net in $\wh{M}$ converges.
A net is a mapping 
\(x:D\to\wh M=\lim_{\lambda\in\LL^\op}(M/\cf^\lambda M)\), 
\(d\mapsto x^d=([x^d_\lambda])_{\lambda\in\LL}\), where $D$ is 
a preordered directed set.
Classes \([x^d_\lambda]\in M/\cf^\lambda M\) lift to elements 
\(x^d_\lambda\in M\) such that for any $d\in D$ and for all 
\(a\le b\in\LL\) we have \(x^d_a-x^d_b\in\cf^aM\).
The net $x$ is Cauchy iff for every $l\in\LL$ there is \(N=N(l)\in D\) 
such that for all \(n,m\ge N\in D\) we have \(x^n-x^m\in\cf^l\wh M\).
The last condition reads: for every \(\lambda\in\LL\) we have 
\(x^n_\lambda-x^m_\lambda\in\cf^lM+\cf^\lambda M\).

Let $x$ as above be a Cauchy net.
Consider the collection 
\(y=(y_\lambda)\overset{\text{def}}=(x^{N(\lambda)}_\lambda)_{\lambda\in\LL}\).
Let us show that \(y\in\wh M\).
Recall that for $a\le b\in\LL$ there is $N\in D$ such that 
\(N\ge N(a)\), \(N\ge N(b)\).
Then \(y_a\equiv x^N_a\mod\cf^aM\), \(y_b\equiv x^N_b\mod\cf^bM\), 
\(x^N_a\equiv x^N_b\mod\cf^aM\).
Hence \(y_a\equiv y_b\mod\cf^aM\) and \(y\in\wh M\).
It follows from the condition in the first paragraph that $x$ converges 
to $y$, thus $\wh{M}$ is complete.
\end{proof}

\begin{proposition}
When $\LL$ satisfies condition (i), the separated completion \(\hat M\) 
of a filtered graded abelian group $M$ coincides with $\wh{M}$.
The filtration
\[ \cf^l\hat{M} 
=\{F\in\hat{M} \mid \exists\,0\in A\in F\ \ A\subset \cf^lM\}, 
\quad l\in\LL,
\]
on \(\hat M\) identifies with filtration \eqref{eq-FlwhM} on $\wh M$.
\end{proposition}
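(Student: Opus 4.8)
The plan is to compare the two objects through the canonical morphism \(\iota\colon M\to\wh M\), \(m\mapsto([m+\cf^\lambda M])_{\lambda\in\LL}\), and to invoke the uniqueness of separated completions rather than to juggle minimal Cauchy filters by hand. First I would record the explicit correspondence between the two descriptions: to a minimal Cauchy filter $F$ one associates the family \(([x_\lambda])_{\lambda}\), where $x_\lambda$ is any point of a set \(A_\lambda\in F\) which is small of order $\cf^\lambda M$ (so that \(A_\lambda-A_\lambda\subset\cf^\lambda M\) pins down the coset \([x_\lambda]\in M/\cf^\lambda M\) unambiguously); conversely, to \(y=([y_\lambda])_\lambda\in\wh M\) one associates the Cauchy filter with base \(\{y_\lambda+\cf^\lambda M\}_{\lambda\in\LL}\), which is filtered because the compatibility \(y_a\equiv y_b\bmod\cf^aM\) from the previous proposition gives \(y_b+\cf^bM\subset y_a+\cf^aM\) for \(a\le b\). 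These two assignments are mutually inverse.

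The next step is to see that $\iota$ realizes $\wh M$ as the separated completion. The group $\wh M$ is complete by the previous proposition, and it is separated: if \(y\in\cf^\lambda\wh M\) for every $\lambda$, then taking \(\lambda=\mu\) in \eqref{eq-FlwhM} forces \(y_\mu\in\cf^\mu M\) for all $\mu$, i.e.\ \(y=0\). The map $\iota$ is uniformly continuous and the filtration of $M$ is the initial one it induces, so that $\iota$ descends to a uniform embedding of the separation quotient of $M$. Finally $\iota(M)$ is dense: given \(y\in\wh M\) and \(l\in\LL\), the choice \(m=y_l\) satisfies \(\iota(m)-y\in\cf^l\wh M\), because for each $\mu$ condition~(i) produces \(\nu\ge l,\mu\) and then \(y_l-y_\mu=(y_l-y_\nu)-(y_\mu-y_\nu)\in\cf^lM+\cf^\mu M\). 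Completeness, separatedness and density together identify $\wh M$ with $\hat M$ by the universal property of the separated completion, and under this identification the canonical map \(M\to\hat M\) becomes $\iota$.

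It remains to match the filtrations, and this is where I expect the real work to lie, since the two descriptions of \(\cf^l\) quantify differently: the filtration on $\hat M$ asks for the existence of a single \(A\in F\) with \(0\in A\subset\cf^lM\), whereas \eqref{eq-FlwhM} asks that \emph{every} truncation $y_\mu$ lie in \(\cf^lM+\cf^\mu M\). Under the correspondence above, the filter of $y$ has base \(\{y_\mu+\cf^\mu M\}_\mu\). For the implication \(y\in\cf^l\wh M\Rightarrow F\in\cf^l\hat M\) one shows \(\cf^lM\in F\): writing \(y_\mu=p_\mu+q_\mu\) with \(p_\mu\in\cf^lM\), \(q_\mu\in\cf^\mu M\), one deduces \(\cf^\mu M\subset\cf^lM\) and hence \(y_\mu+\cf^\mu M\subset\cf^lM\) for a suitable $\mu$, so that the set \(\cf^lM\) (which contains $0$) belongs to $F$. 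For the converse one takes the witnessing \(A\in F\), finds $\mu$ with \(y_\mu+\cf^\mu M\subset A\subset\cf^lM\), extracts the subgroup inclusion \(\cf^\mu M\subset\cf^lM\) together with \(y_\mu\in\cf^lM\), and then uses condition~(i) once more to propagate this to the estimate \(y_{\mu'}\in\cf^lM+\cf^{\mu'}M\) for every $\mu'$ via a common upper bound \(\nu\ge\mu,\mu'\). Collapsing the family of conditions indexed by $\mu$ to a single witnessing set is the only genuinely delicate point, and it rests squarely on the directedness hypothesis~(i).
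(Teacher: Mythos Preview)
Your proof is correct and follows essentially the same overall strategy as the paper's: establish that $\wh M$ is separated, complete (by the previous proposition), and contains $\iota(M)$ as a dense subset, then invoke the universal property of the separated completion to identify $\wh M$ with $\hat M$. The paper carries this out by citing a chain of Bourbaki results to manufacture the isomorphism $f:\hat M\to\wh M$ via a commutative diagram, while you make the correspondence between minimal Cauchy filters and compatible families $([x_\lambda])_\lambda$ explicit from the start; both routes reach the same point.

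The one place where the two arguments genuinely diverge is the filtration matching. The paper argues that both $\cf^l\hat M$ and $\cf^l\wh M$ pull back to $\cf^lM$ on $M$ and then appeals to density of the image (implicitly using that in a separated completion the basic entourages are recovered as closures of the original ones, again via Bourbaki). You instead verify the two inclusions by hand, using directedness to pass from a single witnessing index $\mu$ to the full family of conditions indexed by $\mu'$. Your argument is more self-contained; the paper's is shorter but leans entirely on the reference. One small phrasing issue: when you write ``one deduces $\cf^\mu M\subset\cf^lM$'' after the decomposition $y_\mu=p_\mu+q_\mu$, the inclusion does not follow from the decomposition but from the choice of a suitable $\mu\ge l$ (in fact $\mu=l$ already works); the qualifier ``for a suitable $\mu$'' should govern the choice, not appear as an afterthought.
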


\begin{proof}
The canonical mapping \(\yii:M\to\wh M\) is uniformly continuous since 
for \(z\in\cf^lM\) we have  \(\yii(z)\in\cf^l\wh M\).
Moreover, the filtration on $M$ is a preimage of the filtration on 
$\wh{M}$ hence preimage of the filtration on its subset $\yii(M)$.
In fact, the image $\yii(z)$ of $z\in M$ is in \(\cf^l\wh M\) iff for 
all \(\lambda\in\LL\) we have \(z\in\cf^lM+\cf^\lambda M\).
For \(\lambda=l\) we get \(z\in\cf^lM\).
Furthermore, \(\wh M\) is separated.
In fact, if \(x=([x_\lambda])_{\lambda\in\LL}\) belongs to all 
\(\cf^l\wh M\) then \(x_\lambda\in\cf^lM+\cf^\lambda M\) for all 
\(l,\lambda\in\LL\), hence, \(x_\lambda\in\cf^\lambda M\) for all 
\(\lambda\in\LL\), that is, $x=0$.

Let us prove that the image \(\yii(M)\) is everywhere dense in \(\wh M\).
For any \(x=([x_\lambda])\in\wh M\) and any \(l\in\LL\) we have to 
provide an element \(y\in M\) such that \(x-\yii(y)\in\cf^l\wh M\).
The last condition reads: for all \(\lambda\in\LL\) we have 
\(x_\lambda-y\in\cf^lM+\cf^\lambda M\).
Take \(y=x_l\).
By assumption for all \(l,\lambda\in\LL\) there is \(c\in\LL\) 
such that \(l\le c\), \(\lambda\le c\).
Hence, \(x_\lambda-y=x_\lambda-x_c+x_c-x_l\in\cf^\lambda M+\cf^lM\) 
as required.

Now we can construct the following commutative diagram
\begin{diagram}[h=0.85em]
\HmeetV &&\rLine_\yi &&\HmeetV
\\
\vLine &&&&\dTTo
\\
\\
M &\rEpi^{\overline{\yi}} &\yi(M) &\rMono^{\underline{\yi}} &\hat{M} 
&\rTTo^\cong &\yi(M)\sphat
\\
&\rdEpi(2,4)_{\overline{\yii}} &&&&\ldTTo(2,4)<\cong>h
\\
\dLine &&\dTTo<\cong>g &&\dTTo<\cong>f
\\
\\
&&\yii(M) &\rMono^{\underline\yii} &\wh M
\\
&&&&\uTTo
\\
\HmeetV &&\rLine^\yii &&\HmeetV
\end{diagram}
where \(\yi:M\to\hat{M}\) is the canonical mapping, sending a point 
to the filter of neighborhoods of this point.
It is denoted by $i$ in \cite[Chap.~II, \S3, n.7, 2)]{zbMATH03395017}.
The isomorphism \(\hat{M}\to\yi(M)\sphat\) follows by Propositions 
12.$1^\circ$ and 13 of \cite[Chap.~II, \S3, n.7]{zbMATH03395017}.
The morphism $g$ exists by Proposition~16 and it is invertible by 
Proposition~17 of \cite[Chap.~II, \S3, n.8]{zbMATH03395017}.
The isomorphism $h$ exists by 
\cite[Chap.~II, \S3, n.7, Prop.~13]{zbMATH03395017}.
Hence an isomorphism $f$.

The filter \(\co=\yi(0)\in\hat{M}\) of neighborhoods of $0\in M$ is 
a minimal Cauchy filter.
It has the base \(\{\cf^\lambda M\mid \lambda\in\LL\}\).
Filtration on \(\hat{M}\) consists of
\begin{align*}
\cf^l\hat{M} 
&=\{F\in\hat{M} \mid \exists\,A\in F\cap\co\ \ A-A\subset\cf^lM\}
\\
&=\{F\in\hat{M} \mid \exists\,A\in F \; \exists\lambda\in\LL\ \ A\supset
\cf^\lambda M,\ A-A\subset\cf^lM\}
\\
&=\{F\in\hat{M} \mid \exists\,0\in A\in F\ \ A\subset\cf^lM\}
\end{align*}
\cite[Chap.~II, \S3, n.7, 1)]{zbMATH03395017} (since Cauchy filter $F$ 
is minimal, its base consists of sets invariant under addition of 
\(\cf^\lambda M\) for some $\lambda\in\LL$, which we may assume $\ge l$).

Both \(\cf^l\hat{M}\) and \(\cf^l\wh{M}\) induce on $M$ the same 
subspace \(\cf^lM\) via pull-back with bases \(\yi:M\to\hat{M}\) and 
\(\yii:M\to\wh M\) \cite[Chap.~II, \S3, n.7, 2)]{zbMATH03395017}.
The images \(\yi(M)\subset\hat{M}\), \(\yii(M)\subset\wh{M}\) are dense 
\cite[Chap.~II, \S3, n.7, 3)]{zbMATH03395017}, hence, filtrations 
\(\cf^l\hat{M}\) and \(\cf^l\wh{M}\) are taken to each other under the 
isomorphism \(f:\hat{M}\to\wh{M}\) and its inverse.
\end{proof}

From now on we do not distinguish $\hat{M}$ and $\wh M$.

The mapping \(M\mapsto\hat{M}\) extends to the completion functor 
\(\hat{\text{-}}:\grAb_\LL\to\grAb_\LL\) 
\cite[Chap.~II, \S3, n.7, Prop.~15]{zbMATH03395017} in a unique way so 
that the maps \(\yii_M:M\to\hat{M}\) form a natural transformation.
A filtered graded abelian group $M$ is complete, when the canonical map 
\(\yii:M\to\hat M\) is an isomorphism.
The $\grAb_\LL$\n-category \(\cgrAb_\LL\) of complete $\LL$\n-filtered 
graded abelian groups is a reflective subcategory of \(\grAb_\LL\).
This follows by the remark that complete topological abelian groups form
a reflective subcategory of the category of topological abelian groups.
Thus by \cite[Corollary~4.2.4]{Borceux:CatHandbook2} (see enriched 
version at the end of Chapter~1 of \cite{KellyGM:bascec}) 
the completion functor is an idempotent monad.
In particular, for the unit of this monad \(\yii_M:M\to\hat{M}\), the 
morphisms \(\yii_{\hat{M}}=\wh{\yii_M}:\hat{M}\to\hat{\hat{M}}\) are 
inverse to the multiplication 
\(\mu_M:\hat{\hat{M}}\to\hat{M}\) (cf. \cite[Lemma~2.24]{MR3856926}).
It follows from \apref{ap-A-Reflective} that the reflective subcategory 
\(\cgrAb_\LL\) is symmetric monoidal with the monoidal product 
\(M\hat{\tens}N\overset{\text{def}}=\wh{M\tens N}\).
The unit object is still $\ZZ$.
We extend the functor \(\hat{\tens}^n:\cgrAb_\LL^n\to\cgrAb_\LL\) to 
\(\hat{\tens}^n:\grAb_\LL^n\to\cgrAb_\LL\) via the same recipe
\(\hat{\tens}_{i=1}^nM_i\overset{\text{def}}=\wh{\tens_{i=1}^nM_i}\).

\begin{proposition}\label{pro-nat-trans-phi}
For $n\ge1$ there is a natural transformation
\(\phi^n:\tens_{i=1}^n\wh{M_i}\to\wh{\tens_{i=1}^nM_i}:
\grAb_\LL^n\to\grAb_\LL\).
\end{proposition}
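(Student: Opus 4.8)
The plan is to construct $\phi^n$ directly from the presentation $\wh{\tens_{i=1}^nM_i}=\lim_{\mu\in\LL^\op}\bigl((\tens_iM_i)/\cf^\mu(\tens_iM_i)\bigr)$ and to read off naturality afterwards. One might hope to extend the universal $n$\n-linear map by density from the image of $\tens_i\yii_{M_i}$, but by \propref{pro-uniformly-continuous} that map is only continuous in general, and uniformly continuous solely when $\LL=\LL_+$; so Bourbaki's extension theorem does not apply directly and the inverse-limit presentation is the safer tool. By the universal property of the limit it then suffices to produce, for each $\mu\in\LL$, a morphism $\phi^n_\mu\colon\tens_{i=1}^n\wh{M_i}\to(\tens_iM_i)/\cf^\mu(\tens_iM_i)$ compatible with the projections of the inverse system, and finally to check that the resulting map lands in the filtration \eqref{eq-FlwhM}.

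First I would define $\phi^n_\mu$ as the linearization of an $n$\n-linear map $\prod_i\wh{M_i}\to(\tens_iM_i)/\cf^\mu$. Given $\wh m_i\in\wh{M_i}$, say $\wh m_i\in\cf^{c_i}\wh{M_i}$ for suitable $c_i\in\LL$ (possible since $\bigcup_l\cf^l\wh{M_i}=\wh{M_i}$), I would invoke conditions (i) and (iii) on $\LL$ to choose $\lambda_i\ge c_i$ with $\lambda_i+\sum_{j\ne i}c_j\ge\mu$ for every $i$, pick representatives $m_i\in\cf^{c_i}M_i$ of the $\lambda_i$\n-th components of $\wh m_i$, and set $(\wh m_1,\dots,\wh m_n)\phi^n_\mu=[m_1\tens\cdots\tens m_n]$.

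Well-definedness is the crux and the only genuinely delicate step. Replacing each $m_i$ by $m_i+u_i$ with $u_i\in\cf^{\lambda_i}M_i$ changes $m_1\tens\cdots\tens m_n$ by a sum of tensors, each having at least one factor in $\cf^{\lambda_j}M_j$ and its remaining factors in $\cf^{c_i}M_i$; by \eqref{eq-Fl(xi1nMi)} such a term lies in $\cf^{\lambda_j+\sum_{i\ne j}c_i}(\tens_iM_i)\subset\cf^\mu(\tens_iM_i)$, precisely because of the inequalities arranged above. Independence of the auxiliary $\lambda_i$ follows the same way after passing to a common refinement using (i), and $n$\n-linearity is immediate once representatives are chosen additively. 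Compatibility of the $\phi^n_\mu$ over $\mu'\le\mu$ is automatic, since a choice of $\lambda_i$ valid for $\mu$ is a fortiori valid for $\mu'$.

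It then remains to verify the routine properties. That $\phi^n$ preserves the filtration is read off from the construction: if $\wh m_i\in\cf^{l_i}\wh{M_i}$ with $\sum_il_i=l$ one may take $c_i=l_i$, so each $\phi^n_\mu$ sends $\wh m_1\tens\cdots\tens\wh m_n$ into $\im(\tens_i\cf^{l_i}M_i\to\tens_iM_i)$ modulo $\cf^\mu$, whence by \eqref{eq-FlwhM} the value lies in $\cf^l\wh{\tens_iM_i}$. Naturality in $(M_1,\dots,M_n)$ follows because a morphism $f_i\colon M_i\to N_i$ carries chosen representatives to representatives and commutes with the formation of tensors and quotients. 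Finally I would record the identity $(\tens_i\yii_{M_i})\cdot\phi^n=\yii_{\tens_iM_i}$, which describes $\phi^n$ on the dense image and is what pins it down in the later applications.
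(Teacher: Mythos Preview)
Your proof is correct and complete in outline; the estimates you need are exactly those in the paper.  The approach, however, is genuinely different from the paper's.

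The paper works with the description of $\hat M$ as the set of minimal Cauchy filters: given $F_i\in\hat M_i$ it builds a filter $F$ on $\tens_iM_i$ with base $\{A_1\tdt A_n\mid A_i\in F_i\}$, shows by a telescope argument (the paper's \eqref{eq-x1tdtxn-y1tdtyn}) that $F$ is Cauchy, and defines $\phi^n(F_1\tdt F_n)$ to be the unique minimal Cauchy filter below $F$.  Independence of choices and multilinearity are argued via the lattice of Cauchy filters.

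You instead use the inverse-limit presentation $\wh{\tens_iM_i}=\lim_\mu(\tens_iM_i)/\cf^\mu$ that the paper has already identified with the filter completion, and you produce $\phi^n$ level by level.  Your well-definedness step is exactly the same telescope estimate as \eqref{eq-x1tdtxn-y1tdtyn}, so the analytic content is identical; what differs is the bookkeeping.  Your approach is arguably more elementary (no appeal to minimal Cauchy subfilters), and your opening remark that the density-extension argument fails because the universal $n$-linear map is only continuous, not uniformly so, is a point worth making explicit.  One small thing you glossed over: you checked independence of the $\lambda_i$ but not of the $c_i$; since the $\lambda_j$ for $j\ne i$ depend on $c_i$, changing one $c_i$ forces you to adjust all the $\lambda_j$.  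This is handled by the same common-refinement trick using conditions (i) and (ii), but it deserves a sentence.
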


\begin{proof}
Let $M_i$, $1\le i\le n$,  be filtered abelian groups.
Let $F_i\in\hat M_i$ be minimal Cauchy filters in $M_i$, $1\le i\le n$.
Denote $M=\tens_{i=1}^nM_i$.
Define a basis $B$ of a filter $F$ in $M$ as
\[ B =\{A_1\tens A_2\tdt A_n \mid \forall i\, A_i\in F_i\},
\]
where 
$A_1\tens A_2\tdt A_n
=\{x_1\tens x_2\tdt x_n \mid \forall i\,x_i \in A_i\}$.
Let us prove that $F$ is a Cauchy filter.
Given $\lambda\in\LL$, take for $1\le i\le n$ arbitrary elements 
$a_i\in\LL$, take $A_i\in F_i$ such that 
$A_i-A_i\equiv\{x-y\mid x,y\in A_i\}\subset\cf^{a_i}M_i$, take 
arbitrary elements $y_i\in A_i$.
Let $b_i\in \LL$ be such that $y_i\in\cf^{b_i}M_i$.
Let $c_i\in\LL$ be such that $c_i\le a_i$ and $c_i\le b_i$.
Then $A_i\subset\cf^{c_i}\LL$ since any $x\in A_i$ can be presented as $x=x-y_i+y_i\in\cf^{c_i}M_i+\cf^{b_i}M_i\subset\cf^{c_i}M_i$. 
Let $\lambda_i\in\LL$, $1\le i\le n$, be such that
\[ \lambda_i +\sum_{j\neq i}c_j \ge\lambda.
\]
Let $B_i\in F_i$ satisfy $B_i-B_i\subset\cf^{\lambda_i}M_i$.
Then the set $S=(A_1\cap B_1)\tdt(A_n\cap B_n)\in F$ satisfies 
$S-S\subset\cf^\lambda M$.
In fact, for $x_i,y_i\in A_i\cap B_i$ we have
\begin{align}
x_1\tdt x_n -y_1\tdt y_n &=(x_1-y_1)\tens x_2\tdt x_n 
+y_1\tens(x_2-y_2)\tens x_3\tdt x_n \notag
\\
&+\dots +y_1\tdt y_{n-1}\tens(x_n-y_n) \in \notag
\\
\in\cf^{\lambda_1}M_1\tens\cf^{c_2}M_2\tens\cdots&\tens\cf^{c_n}M_n 
+\cf^{c_1}M_1\tens\cf^{\lambda_2}M_2\tens\cf^{c_3}M_3\tdt\cf^{c_n}M_n 
+\dots \notag
\\
&+\cf^{c_1}M_1\tdt\cf^{c_{n-1}}M_{n-1}\tens\cf^{\lambda_n}M_n 
\subset\cf^\lambda M.
\label{eq-x1tdtxn-y1tdtyn}
\end{align}
The Cauchy filter $F$ contains a unique minimal Cauchy filter $\und F$ 
\cite[Chap.~II, \S3, n.2, Prop.~5]{zbMATH03395017} and we define 
$\phi^n$ as a map sending \(F_1\tdt F_n\) to $\und F$.
The outcome does not depend on the choices made during the construction.
In fact, axioms on $\LL$ and on filters ensure that two different 
choices $F'$ and $F''$ for $F$ are contained in a third choice $F'''$ of
Cauchy filter $F$, hence, for minimal Cauchy filters we have 
\(\und F'=\und F'''=\und F''\).

This is a well-defined mapping $\tilde\phi^n$ from the free graded 
abelian group generated by $n$\n-tuples \((F_1,\dots,F_n)\) to $\hat M$.
One has
\[ \tilde\phi^n(F_1,\dots,F_i'+F_i'',\dots,F_n) 
=\tilde\phi^n(F_1,\dots,F_i',\dots,F_n) 
+\tilde\phi^n(F_1,\dots,F_i'',\dots,F_n)
\]
if \(F_i',F_i''\in\hat{M}_i^k\), $1\le i\le n$, $k\in\ZZ$.
In fact, sum of Cauchy filters $G'$, $G''$ is defined as the filter $G$,
generated by $A'+A''$, $A'\in G'$, $A''\in G''$.
Clearly, $G$ is a Cauchy filter.
Thus, the map $\tilde\phi^n$ factors through a well-defined map 
\(\phi^n:\tens_{i=1}^n\wh{M_i}\to\wh{\tens_{i=1}^nM_i}\).

Notice that
\[ \phi^n(\tens_{i=1}^n\cf^{\lambda_i}\hat{M_i}) \subset 
\cf^{\lambda_1+\dots+\lambda_n} \wh{\tens_{i=1}^nM_i}.
\]
In fact, let \(F_i\in\cf^{\lambda_i}\hat{M_i}\), \(1\le i\le n\).
Thus, $F_i$ is a minimal Cauchy filter such that there is 
\(0\in A_i\in F_i\), \(A_i\subset\cf^{\lambda_i}M_i\).
Then
\[ \phi^n(F_1\tdt F_n) \ni A_1\tdt A_n +\cf^\nu(M_1\tdt M_n)
\]
for any \(\nu\in\LL\).
However, \(0\in A_1\tdt A_n\) and
\[ A_1\tdt A_n \subset \cf^{\lambda_1}M_1\tdt\cf^{\lambda_n}M_n \subset
\cf^{\lambda_1+\dots+\lambda_n}(\tens_{i=1}^nM_i).
\]
Hence, for \(\nu\ge\lambda_1+\dots+\lambda_n\)
\[ 0\in A_1\tdt A_n +\cf^\nu(M_1\tdt M_n) \subset 
\cf^{\lambda_1+\dots+\lambda_n}(\tens_{i=1}^nM_i).
\]
Therefore, 
\(\phi^n(F_1\tdt F_n)\in
\cf^{\lambda_1+\dots+\lambda_n}\wh{\tens_{i=1}^nM_i}\).

Reasonings, similar to independence of choices show that $\phi^n$ 
form a natural transformation.
\end{proof}

For $n=0$ the version of $\phi^n$ is the isomorphism 
\(\phi^0=\yii:\ZZ\to\wh\ZZ\).
The filtered abelian group $\ZZ$ is complete due to non-emptiness of 
\(\LL_{++}\).
In fact, \(\wh\ZZ=\lim_{\lambda\in\LL_{++}^\op}\ZZ=\ZZ\).

\begin{proposition}
The pair \((\hat{\text-},\phi^\bull):\grAb_\LL\to\grAb_\LL\) is a lax 
symmetric monoidal functor.
\end{proposition}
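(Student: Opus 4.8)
I take $\hat{\text-}$ as the underlying functor, the natural transformations $\phi^n$ of \propref{pro-nat-trans-phi} as the multiplication, and $\phi^0=\yii:\ZZ\to\wh\ZZ$ (an isomorphism, since $\ZZ$ is complete) as the unit. In the $\tens^I$\n-formalism of \cite[Definition~2.10]{BesLyuMan-book}, a lax symmetric monoidal functor is exactly such a family $(\phi^n)$ subject to coherence with the structure isomorphisms $\lambda^f$: for every map $f:\mb n\to\mb m$ of finite ordered sets one must have
\[
\phi^n\cdot\wh{\lambda^f}
=\lambda^f\cdot\bigl(\tens_{j=1}^m\phi^{f^{-1}j}\bigr)\cdot\phi^m
:\tens_{i=1}^n\wh{M_i}\to\wh{\tens_{j=1}^m\tens_{i\in f^{-1}j}M_i},
\]
together with compatibility with the symmetries and the reductions $\phi^0=\yii_\ZZ$, $\phi^1=\yii_M$ coming from $\rho^1$. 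Since all the maps involved are additive, both composites are determined by their values on elementary tensors $F_1\tens\dots\tens F_n$ of minimal Cauchy filters $F_i\in\wh{M_i}$, so it suffices to compare these.

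The main step is the associativity coherence. Write $M=\tens_{i=1}^nM_i$. By construction $\phi^n(F_1\tens\dots\tens F_n)=\und F$, the minimal Cauchy filter contained in the Cauchy filter $F$ with basis $\{A_1\tdt A_n\mid A_i\in F_i\}$, and $\wh{\lambda^f}$ carries $\und F$ to the minimal Cauchy filter $\und{(\lambda^f_*F)}$ of the image filter $\lambda^f_*F$, whose basis consists of the rebracketed products $\tens_{j=1}^m\tens_{i\in f^{-1}j}A_i$. On the other route one first forms the minimal Cauchy filters $G_j=\phi^{f^{-1}j}\bigl(\tens_{i\in f^{-1}j}F_i\bigr)$ in $\tens_{i\in f^{-1}j}M_i$ and then $\phi^m(\tens_jG_j)=\und G$, where $G$ has basis $\{\tens_{j=1}^mB_j\mid B_j\in G_j\}$. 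Each such $B_j$ contains a set $\tens_{i\in f^{-1}j}A_i+\cf^{\nu_j}(\tens_{i\in f^{-1}j}M_i)$, so every basis set of $G$ contains a basis set of $\lambda^f_*F$, whence $G\subset\lambda^f_*F$ as filters. Both are Cauchy, and the chain $\und G\subset G\subset\lambda^f_*F$ exhibits $\und G$ as a minimal Cauchy filter inside $\lambda^f_*F$; by uniqueness of the minimal Cauchy filter contained in a given one \cite[Chap.~II, \S3, n.2, Prop.~5]{zbMATH03395017} we conclude $\und G=\und{(\lambda^f_*F)}=\wh{\lambda^f}(\und F)$. This is precisely the required identity on elementary tensors.

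The remaining verifications are routine. The symmetry isomorphisms of $\grAb_\LL$ permute the factors $A_i$ of the generating products (with the usual signs supplied by the grading), and since the construction of $\phi^n$ is manifestly symmetric in $F_1,\dots,F_n$, the same filter\n-tracking shows that $\phi^n$ intertwines the symmetries. Unitality holds because $\phi^1=\yii_M$ is the unit of the idempotent completion monad and $\phi^0=\yii_\ZZ$ is invertible, so the left and right unit coherences reduce to naturality of $\yii$ and the standard identities of an idempotent monad. The only genuine obstacle is the associativity step: the difficulty is that the two\n-stage route thickens the generating products by intermediate $\cf^\nu$\n-neighborhoods, and one must check that this thickening does not alter the resulting minimal Cauchy filter, which is exactly what the inclusion $G\subset\lambda^f_*F$ together with uniqueness of minimal Cauchy filters supplies.
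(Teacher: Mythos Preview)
Your proof is correct and considerably more explicit than the paper's. The paper's own argument is a single sentence: it simply asserts that the ``naturality (in the ordinary everyday usage sense) of the construction of $\phi^n$'' yields the coherence condition of \cite{DayStreet-subst} (diagram~(2.17.2) of \cite{BesLyuMan-book}), and leaves the reader to unpack this. You actually carry out that unpacking: you identify the associativity coherence as the nontrivial step, track both composites on elementary tensors of minimal Cauchy filters, and close the argument with the filter inclusion $G\subset\lambda^f_*F$ together with uniqueness of the minimal Cauchy subfilter. This is exactly the content the paper's appeal to ``naturality'' is gesturing at, made precise.

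One small slip: you write $\phi^1=\yii_M$, but $\phi^1$ has source and target $\hat M$, not $M$. From the construction of \propref{pro-nat-trans-phi} applied at $n=1$, one gets $\phi^1(F)=\und F=F$ for a minimal Cauchy filter $F$, so $\phi^1=\id_{\hat M}$ (equivalently $\yii_{\hat M}$, which is the identity since $\hat M$ is complete). This does not affect your unitality argument, which only uses that $\phi^1$ is compatible with $\rho^1$ and that $\phi^0=\yii_\ZZ$ is invertible.
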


\begin{proof}
Naturality (in the ordinary everyday usage sense) of the construction of
$\phi^n$ leads to required condition from \cite{DayStreet-subst}, see also 
diagram~(2.17.2) of \cite{BesLyuMan-book}.
\end{proof}

According to \cite[Proposition~3.28]{BesLyuMan-book} \(\phi^\bull\) make
completion \(\hat{\text-}\) also into a symmetric multifunctor 
\(\wh{\grAb_\LL}\to\wh{\grAb_\LL}\).

\begin{proposition}\label{pro-yiyi-phi=yi}
The canonical mapping \(\yi:M \to\hat{M}\) satisfies for $n\ge0$ the 
equation
\begin{equation}
\bigl(M_1\tdt M_n \rTTo^{\yi_1\tdt\yi_n} \hat{M}_1\tdt\hat{M}_n 
\rTTo^{\phi^n} \wh{M_1\!\!\tens\!\cdot\!\!\cdot\!\!\cdot\!\tens\!\!M_n}
\bigr) =\yi_{M_1\tdt M_n}.
\label{eq-yiyi-phi=yi}
\end{equation}
\end{proposition}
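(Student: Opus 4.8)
The plan is to reduce \eqref{eq-yiyi-phi=yi} to a check on elementary tensors and then to a direct comparison of the two minimal Cauchy filters involved. Both $\phi^n\cdot(\yi_1\tdt\yi_n)$ and $\yi_{M_1\tdt M_n}$ are morphisms of $\grAb_\LL$, hence additive and of degree $0$, and the graded group $M_1\tdt M_n$ is generated by homogeneous elementary tensors $x_1\tdt x_n$ with $x_i\in M_i$; so it suffices to verify the identity on such a generator. The case $n=0$ is tautological, since there $\phi^0=\yii\colon\ZZ\to\wh\ZZ$, the empty tensor product equals the complete group $\ZZ$, and $\yi_\ZZ=\yii$. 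Thus I would fix $n\ge1$ and write $M=M_1\tdt M_n$, $p=x_1\tdt x_n$.

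Next I would spell out both filters. The right-hand side $\yi_M(p)$ is the neighbourhood filter of $p$, with base $\{p+\cf^\nu M\mid\nu\in\LL\}$. Each $\yi_i(x_i)\in\hat M_i$ is the neighbourhood filter of $x_i$, with base $\{x_i+\cf^{\lambda_i}M_i\mid\lambda_i\in\LL\}$, and by the construction of $\phi^n$ in \propref{pro-nat-trans-phi} the value $\phi^n(\yi_1(x_1)\tdt\yi_n(x_n))$ is the minimal Cauchy filter $\und F$ inside the filter $F$ with base $\{(x_1+\cf^{\lambda_1}M_1)\tdt(x_n+\cf^{\lambda_n}M_n)\}$ (tensor of cosets meaning the set of elementary tensors). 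Using the description of a minimal filter recalled before \propref{pro-nat-trans-phi}, a base of $\und F$ is
\[
\bigl\{(x_1+\cf^{\lambda_1}M_1)\tdt(x_n+\cf^{\lambda_n}M_n)+\cf^{\nu'}M \mid \lambda_i,\nu'\in\LL\bigr\}.
\]
I would then prove $\und F=\yi_M(p)$ by comparing bases. One inclusion is free: $p$ lies in $(x_1+\cf^{\lambda_1}M_1)\tdt(x_n+\cf^{\lambda_n}M_n)$ (all increments zero), so each base set of $\und F$ contains $p+\cf^{\nu'}M$, giving $\und F\subseteq\yi_M(p)$.

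The substance is the opposite inclusion, which is the expected obstacle. Given $\nu\in\LL$, I must find $\lambda_i$ and $\nu'$ with $(x_1+\cf^{\lambda_1}M_1)\tdt(x_n+\cf^{\lambda_n}M_n)+\cf^{\nu'}M\subseteq p+\cf^\nu M$. Choosing $b_j\in\LL$ with $x_j\in\cf^{b_j}M_j$ and imposing $\lambda_j\ge b_j$, every $y_j\in x_j+\cf^{\lambda_j}M_j$ then lies in $\cf^{b_j}M_j$; expanding $y_1\tdt y_n-p$ by the telescoping identity of \eqref{eq-x1tdtxn-y1tdtyn}, the $i$-th summand has its $i$-th factor an increment in $\cf^{\lambda_i}M_i$ and all other factors in $\cf^{b_j}M_j$, hence by \eqref{eq-Fl(xi1nMi)} lies in $\cf^{\lambda_i+\sum_{j\ne i}b_j}M$. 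Condition (iii) together with directedness (i) lets me pick $\lambda_i\ge b_i$ with $\lambda_i+\sum_{j\ne i}b_j\ge\nu$ for each $i$; taking $\nu'=\nu$ then yields $y_1\tdt y_n-p\in\cf^\nu M$, which is the required inclusion. This is exactly the estimate of \propref{pro-nat-trans-phi} transported to the neighbourhood filters of fixed points, so once it is arranged the comparison of bases closes, and additivity propagates the identity from generators to all of $M_1\tdt M_n$.
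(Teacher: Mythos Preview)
Your proposal is correct and follows essentially the same route as the paper: reduce to elementary tensors, identify the filter $F$ built from the neighbourhood filters $\yi_i(x_i)$, and use the telescoping estimate \eqref{eq-x1tdtxn-y1tdtyn} together with conditions (i) and (iii) on $\LL$ to show that the relevant base sets sit inside $p+\cf^\nu M$. The only cosmetic difference is that the paper, rather than comparing bases of $\und F$ and $\yi_M(p)$ in both directions, simply observes that the telescoping estimate makes the neighbourhood filter $\yi_M(p)$ a subfilter of $F$ and then invokes uniqueness of the minimal Cauchy subfilter of $F$ to conclude $\und F=\yi_M(p)$; your explicit two-sided base comparison achieves the same thing.
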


\begin{proof}
For $n\ge1$ we may consider a graded component of $M_i$, a filtered 
abelian group, which we denote again by $M_i$.
Take elements \(y_i\in M_i\).
For some \(c_i\in\LL\) we have \(y_i\in\cf^{c_i}M_i\).
The filter \(\yi(y_i)\in\hat{M}_i\) has the base formed by 
\(y_i+\cf^{\lambda_i}M_i\), \(\lambda_i\in\LL\).
Thus, the Cauchy filter $F$ with the basis 
\((y_1+\cf^{\lambda_1}M_1)\tdt(y_n+\cf^{\lambda_n}M_n)\) contains the 
minimal Cauchy filter 
\(\phi^n(\yi(y_1)\tdt\yi(y_n))\in\wh{M_1\tdt M_n}\).
For any \(\lambda\in\LL\) there are \(\lambda_i\in\LL\) such that
\[ \lambda_i\ge c_i, \qquad \lambda_i +\sum_{j\neq i}c_j \ge\lambda.
\]
Using \eqref{eq-x1tdtxn-y1tdtyn} we see that the last set is contained 
in \(y_1\tdt y_n+\cf^\lambda(M_1\tdt M_n)\).
Hence, $F$ contains the minimal Cauchy filter of neighborhoods of 
\(y_1\tdt y_n\).
Therefore, \(\phi^n(\yi(y_1)\tdt\yi(y_n))=\yi(y_1\tdt y_n)\) 
by uniqueness of the minimal Cauchy subfilter.

The case of $n=0$ is straightforward.
\end{proof}

\subsection{Complete \texorpdfstring{$\Lambda$}{Λ}-modules.}\label{sec-Complete-Lambda-modules}
From now on, \emph{the graded commutative filtered ring $\Lambda$} will 
be \textbf{complete}.

For the moment $\cv=\Lambda\modul_\LL$ means the category of 
$\LL$\n-filtered graded $\Lambda$\n-modules for a graded commutative 
$\LL$\n-filtered ring $\Lambda$.
Morphisms are grading and filtration preserving $\Lambda$\n-module maps.
It is symmetric monoidal with the tensor product \(M\tens_\Lambda N\) 
equipped with filtration \eqref{eq-Fl(xi1nMi)}, where, of course, 
$\tens$ has to be interpreted as $\tens_\Lambda$, not as $\tens_\ZZ$.
The unit object $\1$ is $\Lambda$ with its filtration.
The category \(\Lambda\modul_\LL\) is closed.
In fact, let \(M,N\in\Lambda\modul_\LL\).
Associate with them a new graded $\LL$\n-filtered $\Lambda$\n-module 
\(\und{\Lambda\modul_\LL}(M,N)\) with
\[ \cf^l\und{\Lambda\modul_\LL}(M,N)^d =\{f\in\und{\Lambda\modul}(M,N)^d
\mid \forall\lambda\in\LL \; \forall k\in\ZZ \; (\cf^\lambda M^k)f 
\subset \cf^{\lambda+l}N^{k+d} \},
\]
the inner hom.
The evaluation
\[ \ev: M\tens_\Lambda\und{\Lambda\modul_\LL}(M,N) \to N, 
\quad m\tens f \mapsto (m)f,
\]
is a morphism of \(\Lambda\modul_\LL\), and it turns this category into 
a closed symmetric monoidal one.
Proof is the same as in \(\grAb_\LL\) case.
All definitions and notions of Section~\ref{sec-intro} apply for this 
$\cv$.
Note that the uniform space associated with the product 
\(\prod_{i\in I}M_i\) in \(\Lambda\modul_\LL\) (over an infinite 
set $I$) differs from product of uniform spaces $M_i$.

A $\Lambda$\n-module $M$ is complete, when the canonical map 
\(\yi:M\to\hat M\) is an isomorphism.
The category of complete $\Lambda$\n-modules \(\Lambda\modulc_\LL\) is 
a full \(\Lambda\modul_\LL\)-subcategory of \(\Lambda\modul_\LL\).

\begin{remark}
Let \((\fc,\Delta,\eps)\) be a cocategory.
Then the completion \(\hat\fc\) equipped with the comultiplication 
\(\big(\hat\fc\rTTo^{\wh\Delta} \wh{\fc\tens\fc}\rTTo^{\wh{\yi\tens\yi}} (\hat\fc\tens\hat\fc)\sphat\equiv\hat\fc\hat\tens\hat\fc\big)\) 
and the counit \(\hat\fc\rTTo^{\wh\eps} \wh{\1\fc}=\hat\1\hat\fc\) is 
a cocategory over $\Lambda$ with respect to monoidal structure 
$\hat\tens$, see \apref{sec-Completion-coalgebras}.
\end{remark}

\begin{example}\label{exa-Novikov-ring}
The universal Novikov ring
\[ \Lambda_{nov}(R) =\bigg\{\sum_{i=0}^{\infty}a_iT^{\lambda_i}e^{n_i} 
\mid \forall i\;a_i\in R,\;\lambda_i\in\RR,\;n_i\in\ZZ,\;
\lim_{i\to\infty}\lambda_i=\infty\bigg\}
\]
contains a subring, the Novikov ring,
\[ \Lambda_{0,nov}(R) =\bigg\{\sum_{i=0}^{\infty}a_iT^{\lambda_i}e^{n_i}
\in\Lambda_{nov}(R) \mid \forall i\;\lambda_i\ge0 \bigg\}
\]
The grading is determined by \(\deg R=0\), \(\deg T=0\), \(\deg e=2\) 
\cite[\S1.7 (Conv. 4)]{FukayaOhOhtaOno:Anomaly}.
\(\Lambda_{nov}(R)\) is $\RR$\n-filtered according to 
\cite[\S1.7 (Conv. 6)]{FukayaOhOhtaOno:Anomaly}.
The filtration is
\[ \cf^\lambda\Lambda_{nov}(R) =T^\lambda\Lambda_{0,nov}(R), 
\qquad \lambda\in\RR.
\]
Similarly, \(\Lambda_{0,nov}(R)\) is $\RR_{\ge0}$\n-filtered by
\[ \cf^\lambda\Lambda_{0,nov}(R) =T^\lambda\Lambda_{0,nov}(R), 
\qquad \lambda\in\RR_{\ge0}.
\]
These rings are complete [\textit{ibid}].
\end{example}

\subsection{Complete cocategories.}\label{sec-Complete-cocategories}
The set-up of \apref{ap-A-Reflective} applies well to symmetric monoidal
$\Lambda\modul_\LL$\n-category \(\cd=(\Lambda\modul_\LL\Quiv,\boxt^I)\) 
and its reflective subcategory of complete quivers 
\(\cc=\Lambda\modulc_\LL\Quiv\).
According to \propref{prop-C-lax-monoidal} the category $\cc$ is lax 
symmetric monoidal with the product 
\(\wh\boxt^{i\in I}\fa_i=\wh{\boxt^{i\in I}\fa_i}\).
The completion of a filtered quiver \(\fa\in\Ob\cd\) is given by the 
quiver \(\hat{\fa}\in\Ob\cc\) with \(\Ob\hat{\fa}=\Ob\fa\) and 
\(\hat{\fa}(X,Y)=\wh{\fa(X,Y)}\) for \(X,Y\in\Ob\fa\).
Hence, \(\yi:\fa\to\hat{\fa}\) is given by the morphisms 
\(\yi:\fa(X,Y)\to\wh{\fa(X,Y)}\), \(X,Y\in\Ob\fa\).
\propref{pro-nat-trans-phi} implies the existence of a natural 
transformation 
\(\phi^n:\boxt_{i=1}^n\wh{\fa_i}\to\wh{\boxt_{i=1}^n\fa_i}\).
According to \propref{pro-yiyi-phi=yi} the equation
\[ \bigl(\fa_1\bdb\fa_n \rTTo^{\yi_1\bdb\yi_n} 
\hat{\fa}_1\bdb\hat{\fa}_n \rTTo^{\phi^n} 
\wh{\fa_1\!\!\boxt\!\cdot\!\!\cdot\!\!\cdot\!\boxt\!\!\fa_n} \bigr) 
=\yi_{\fa_1\bdb\fa_n}
\]
holds.
Therefore, the conclusion of \propref{pro-1yi1yi1yi1-invertible} holds 
true and by \corref{cor-D-monoidal-C-monoidal} we find that 
\(\cc=(\Lambda\modulc_\LL\Quiv,\wh\boxt^I)\) is a symmetric monoidal 
$\Lambda\modul_\LL$\n-category.

Fix a (large) set $S$.
Consider $\Lambda\modul_\LL$\n-category \(\cd=\Lambda\modul_\LL\Quiv_S\)
and its reflective subcategory of complete quivers 
\(\cc=\Lambda\modulc_\LL\Quiv_S\).
We apply the results of \apref{ap-A-Reflective} to this situation 
as well.
Again, the conclusions of Propositions \ref{pro-nat-trans-phi} and 
\ref{pro-yiyi-phi=yi} hold for \(M_i\in\Lambda\modul_\LL\Quiv_S\), 
hence, \(\cd=(\Lambda\modul_\LL\Quiv_S,\tens^I)\) and 
\(\cc=(\Lambda\modulc_\LL\Quiv_S,\hat\tens^I)\) are monoidal 
$\Lambda\modul_\LL$\n-categories.
In particular, the construction of \apref{sec-Completion-coalgebras} 
applies.

It will be shown later that the following simple definition is 
equivalent to Definition~2.40 of \cite{MR3856926}.

\begin{definition}
Let $\fa$ be a complete filtered quiver and $S$ a set.
A morphism \(\phi:\1S\to\fa\in\Lambda\modulc_\LL\Quiv\) is called 
\emph{tensor convergent} if for every $l\in\LL$ and every $X\in S$ there
exists $N\in\NN$ such that for every $n\ge N$
\[ [\phi(1_X)]^{\hat{\tens}n} \in 
\cf^l[\fa(\phi X,\phi X)^{\hat{\tens}n}].
\]
\end{definition}

\begin{lemma}\label{lem-phipsi-tensor_convergent-phi+psi}
If \(\phi,\psi:\1S\to\fc\) are tensor convergent and 
\(\Ob\phi=\Ob\psi:S\to\Ob\fc\), then \(\phi+\psi\) is tensor convergent 
as well.
\end{lemma}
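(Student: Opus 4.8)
The plan is to reduce the claim to the behaviour of the filtration under the symmetry of $\hat\tens$, exploiting that tensor convergence is a statement about \emph{whole} powers $[\phi(1_X)]^{\hat\tens n}$ rather than about individual letters. Fix $X\in S$ and write $a=\phi(1_X)$, $b=\psi(1_X)$. Because $\Ob\phi=\Ob\psi$, both $a$ and $b$ live in the same complete module $\fc(\phi X,\phi X)=\fc(\psi X,\psi X)$; they have degree $0$ and, being images of the unit $1\in\cf^0\Lambda$, lie in $\cf^0$. Moreover $(\phi+\psi)(1_X)=a+b$, so I must show that for every $l\in\LL$ the power $(a+b)^{\hat\tens n}$ lands in $\cf^l[\fc(\phi X,\phi X)^{\hat\tens n}]$ for all sufficiently large $n$, with a bound on $n$ that is uniform in $X$.

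First I would expand $(a+b)^{\hat\tens n}$ by multilinearity into the $2^n$ words $w_1\hat\tens\dots\hat\tens w_n$ with $w_i\in\{a,b\}$, and group them by the number $p$ of factors equal to $a$ (so $q=n-p$ factors equal $b$). The decisive observation is that a word whose $a$'s occupy a position set $I$ with $|I|=p$ is obtained from $a^{\hat\tens p}\hat\tens b^{\hat\tens q}$ by the canonical reordering isomorphism built from the symmetry and associativity constraints of $\hat\tens$; since $a$ and $b$ have degree $0$, this reordering carries no Koszul sign. These constraints are morphisms of $\grAb_\LL$, hence preserve filtration degrees, while $\hat\tens$ adds them, as for $\tens$ in \eqref{eq-Fl(xi1nMi)} and its completed version. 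Consequently every such word lies in $\cf^{\alpha+\beta}$ whenever $\alpha,\beta\in\LL$ satisfy $a^{\hat\tens p}\in\cf^\alpha$ and $b^{\hat\tens q}\in\cf^\beta$.

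Now I would feed in tensor convergence of $\phi$ and $\psi$: given $l\in\LL$, choose $N_a,N_b\in\NN$ with $a^{\hat\tens p}\in\cf^l$ for all $p\ge N_a$ and $b^{\hat\tens q}\in\cf^l$ for all $q\ge N_b$; since $a,b\in\cf^0$ one also has $a^{\hat\tens p},b^{\hat\tens q}\in\cf^0$ for every $p,q$. Set $N=N_a+N_b$. For $n\ge N$ and any $p$ it is impossible to have both $p<N_a$ and $n-p<N_b$, so at least one of $a^{\hat\tens p}$, $b^{\hat\tens q}$ lies in $\cf^l$ while the other lies in $\cf^0$; by additivity of filtration degrees under $\hat\tens$ and compatibility of $+$ with $\le$ on $\LL$, their tensor lies in some $\cf^{\alpha+\beta}$ with $\alpha+\beta\ge l$, hence in $\cf^l$. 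Therefore every word in the expansion lies in $\cf^l[\fc(\phi X,\phi X)^{\hat\tens n}]$, and as $\cf^l$ is a subgroup the finite sum $(a+b)^{\hat\tens n}$ lies there too. The bound $N$ depends only on $l$ and works uniformly in $X$, which is precisely tensor convergence of $\phi+\psi$.

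The step I expect to be the crux is the reordering observation: a highly mixed word such as $a\hat\tens b\hat\tens a\hat\tens b\hat\tens\cdots$ has no long run of a single letter, so one cannot bound it by inspecting its factors one at a time — each factor only sits in $\cf^0$. What rescues the argument is that this word is the image under a filtration-preserving permutation of $a^{\hat\tens p}\hat\tens b^{\hat\tens q}$, and the \emph{entire} block $a^{\hat\tens p}$ is already deep in the filtration by hypothesis; so depth of a whole power is transported to each mixed word. Making the bookkeeping of these reorderings and the additivity of filtration degrees under $\hat\tens$ precise in the completed setting is the only genuine work, after which the counting estimate $n\ge N_a+N_b$ is immediate.
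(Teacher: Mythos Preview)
Your proof is correct and follows the same skeleton as the paper's: expand $(a+b)^{\hat\tens n}$ into $2^n$ words, use the symmetry of $\hat\tens$ (a filtration-preserving isomorphism) to identify each word with $a^{\hat\tens p}\hat\tens b^{\hat\tens q}$, then bound the blocks. Where you diverge is in observing that $a,b\in\cf^0$ automatically, since $1_X\in\cf^0\Lambda$ and $\phi,\psi$ preserve filtration; this lets the complementary block sit in $\cf^0$ for free, so the pigeonhole bound $N=N_a+N_b$ suffices. The paper does not exploit this and instead tracks arbitrary levels $c_k,b_m\in\LL$ for the finitely many small powers, compensates with auxiliary constants $c,b\in\LL_+$, and then needs a second round of cutoffs $N,P$ chosen against $\lambda+b$ and $c+\lambda$ before setting $Q=1+\max\{K+P,N+M\}$. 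Your version is genuinely shorter; the paper's version would survive if one weakened the hypothesis to morphisms of some fixed nonzero level. One small slip: your $N_a,N_b$ depend on $X$ as well as on $l$, so the final bound is not uniform in $X$ as you assert --- but tensor convergence only asks for a bound for each pair $(l,X)$, so this does no harm.
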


\begin{proof}
Fix an element \(x\in S\) and an element $l\in\LL$.
Consider \(\lambda\in\LL_+\) such that \(\lambda\ge l\).
Denote by $Y$ the $\Lambda$\n-module \(\fc(\phi X,\phi X)\).
Let $K\in\NN$ (resp. $M\in\NN$) be such that for every $k\in\NN$, 
$k\ge K$ (resp. $m\in\NN$, $m\ge M$) we have 
\(\phi(1_X)^{\hat{\tens}k}\in\cf^\lambda(Y^{\hat{\tens}k})\) (resp. \(\psi(1_X)^{\hat{\tens}m}\in\cf^\lambda(Y^{\hat{\tens}m})\)).
Let \(\phi(1_X)^{\hat{\tens}k}\in\cf^{c_k}(Y^{\hat{\tens}k})\) for 
\(0\le k<K\) and let $c\in\LL_+$ be such that \(c+c_k\ge0\) for 
\(0\le k<K\).
Let \(\psi(1_X)^{\hat{\tens}m}\in\cf^{b_m}(Y^{\hat{\tens}m})\) for 
\(0\le m<M\) and let $b\in\LL_+$ be such that \(b+b_m\ge0\) for 
\(0\le m<M\).
Let $N\in\NN$ (resp. $P\in\NN$) be such that for every $k\in\NN$, 
$k\ge N$ (resp. $m\in\NN$, $m\ge P$) we have 
\(\phi(1_X)^{\hat{\tens}k}\in\cf^{\lambda+b}(Y^{\hat{\tens}k})\) (resp. \(\psi(1_X)^{\hat{\tens}m}\in\cf^{c+\lambda}(Y^{\hat{\tens}m})\)).
Set \(Q=1+\max\{K+P,N+M\}\).
For any $n\ge Q$ the $2^n$ summands of 
\((\phi(1_X)+\psi(1_X))^{\hat{\tens}n}\) are identified with one of the 
summands $\phi(1_X)^{\hat{\tens}a}\hat{\tens}\psi(1_X)^{\hat{\tens}d}$, 
\(a+d=n\), \(a,d\in\NN\), with the use of symmetry, preserving 
the filtration.
If $a<K$, then $d\ge P$ and
\[ \phi(1_X)^{\hat{\tens}a}\hat{\tens}\psi(1_X)^{\hat{\tens}d} \in
\cf^{c_a}(Y^{\hat{\tens}a})\hat{\tens}\cf^{c+\lambda}(Y^{\hat{\tens}d}) 
\subset \cf^\lambda(Y^{\hat{\tens}n}).
\]
If $d<M$, then $a\ge N$ and
\[ \phi(1_X)^{\hat{\tens}a}\hat{\tens}\psi(1_X)^{\hat{\tens}d} \in
\cf^{\lambda+b}(Y^{\hat{\tens}a})\hat{\tens}\cf^{b_d}(Y^{\hat{\tens}d}) 
\subset \cf^\lambda(Y^{\hat{\tens}n}).
\]
It remains to consider the case $a\ge K$, $d\ge M$.
Then
\[ \phi(1_X)^{\hat{\tens}a}\hat{\tens}\psi(1_X)^{\hat{\tens}d} \in
\cf^\lambda(Y^{\hat{\tens}a})\hat{\tens}\cf^\lambda(Y^{\hat{\tens}d}) 
\subset \cf^{2\lambda}(Y^{\hat{\tens}n}).
\]
Hence, 
\((\phi(1_X)+\psi(1_X))^{\hat{\tens}n}\in
\cf^\lambda(Y^{\hat{\tens}n})\subset\cf^l(Y^{\hat{\tens}n})\).
\end{proof}

\begin{remark}\label{rem-tensor-convergent}
For any map $f:Q\to S$ and tensor convergent \(\phi:\1S\to\fa\) the map 
$\1f\cdot\phi$ is tensor convergent as well.
For any morphism \(g:\fa\to\fb\in\Lambda\modulc_\LL\Quiv\) and any 
$X\in S$ the map 
\(g^{\hat{\tens}n}:\fa(\phi X,\phi X)^{\hat{\tens}n}\to
\fb(g\phi X,g\phi X)^{\hat{\tens}n}\) 
is in \(\Lambda\modulc_\LL\), hence, \(\phi\cdot g\) is 
tensor convergent as well.
\end{remark}

\begin{definition}
Inspired by Definitions \ref{def-cocategory}, 
\ref{def-augmented-cocategory}, \ref{def-conilpotent-cocategory} we say 
that a \emph{completed conilpotent cocategory} $\fC=\hat{\fc}$ is a 
completion (as a filtered quiver) of a conilpotent cocategory $\fc$.
It is itself a cocategory (with respect to \(\hat{\tens}\)) equipped 
with the comultiplication 
\(\Delta_{\hat{\fc}}=(\hat{\fc}\rto{\wh{\Delta_\fc}} \wh{\fc\tens \fc}
\rTTo^{\wh{\yi\tens\yi}} \wh{\hat{\fc}\tens\hat{\fc}}
=\hat{\fc}\hat{\tens}\hat{\fc})\), 
the counit 
\(\eps_{\hat{\fc}}=\wh{\eps_\fc}:
\hat{\fc}\to\hat{\Lambda}\hat{\fc}=\Lambda\fc\) 
and the augmentation 
\(\eta_{\hat{\fc}}=\wh{\eta_\fc}:
\Lambda\fc=\hat{\Lambda}\hat{\fc}\to\hat{\fc}\), 
see \apref{sec-Completion-coalgebras}.
Morphisms of completed conilpotent cocategories (\emph{cofunctors}) 
\(f:\fB\to\fC\) are morphisms of \(\Lambda\modulc_\LL\Quiv\) compatible 
with the comultiplication and the counit in the sense of 
\eqref{eq-dia-Delta-epsilon} (with $\tens$ replaced with $\hat{\tens}$) 
such that \(\eta_\fB\cdot f-\1 f\cdot\eta_\fC:\1\fB\to\fC\) is 
tensor convergent.
In their set-up De~Deken and Lowen introduce another notion -- 
$qA_\infty$-functors \cite{MR3856926} (which turns out equivalent to 
cofunctors, cf. \propref{prop-cofunctor-qA8}) in analogy with 
$qdg$-functors of \cite{1010.0982}.
The category of cofunctors between completed conilpotent cocategories 
is denoted $\cncCat$.
The category with augmentation preserving cofunctors between completed 
conilpotent cocategories (\(\eta_\fB\cdot f=\1 f\cdot\eta_\fC\), see 
\eqref{dia-augmentation-preserving}) is denoted $\acncCat$.
\end{definition}

\begin{remark}
The composition of cofunctors \(f:\fa\to\fb\) and \(g:\fb\to\fc\) is a 
cofunctor itself.
Indeed,
\begin{multline*}
\eta_\fa\cdot f\cdot g -\1(f\cdot g)\cdot\eta_\fc 
=\eta_\fa\cdot f\cdot g -\1 f\cdot\eta_\fb\cdot g 
+\1 f\cdot\eta_\fb\cdot g -\1f\cdot\1g\cdot\eta_\fc
\\
=(\eta_\fa\cdot f-\1 f\cdot\eta_\fb)\cdot g +\1 f\cdot(\eta_\fb\cdot g 
-\1g\cdot\eta_\fc): \1\fa \to \fc.
\end{multline*}
Both summands are tensor convergent by \remref{rem-tensor-convergent} 
and induce the mapping \(\Ob f\cdot\Ob g\) on objects.
By \lemref{lem-phipsi-tensor_convergent-phi+psi} the map 
\(\eta_\fa\cdot f\cdot g-\1(f\cdot g)\cdot\eta_\fc\) is 
tensor convergent.
Identity morphism is a cofunctor.
Thus, there is a category of completed conilpotent cocategories 
$\cncCat$ with cofunctors as morphisms.
\end{remark}

\begin{lemma}\label{lem-phiboxtpsi-tensor-convergent}
Let \(\phi:\1S\to\fb\), \(\psi:\1Q\to\fd\) be morphisms in 
\(\Lambda\modulc_\LL\Quiv\) and let $\phi$ be tensor convergent.
Then \(\phi\wh\boxt\psi:\1(S\times Q)=\1S\wh\boxt\1Q\to\fb\wh\boxt\fd\) 
is tensor convergent.
\end{lemma}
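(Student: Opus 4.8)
The plan is to check the defining condition of tensor convergence for $\phi\wh\boxt\psi$ directly at an arbitrary object $(X,Y)\in S\times Q$, reducing everything to the hypothesis on $\phi$. First I would identify the relevant element. Under the identification $\1(S\times Q)=\1S\wh\boxt\1Q$ the identity $1_{(X,Y)}$ corresponds to $1_X\hat{\tens}1_Y$, so $(\phi\wh\boxt\psi)(1_{(X,Y)})=\phi(1_X)\hat{\tens}\psi(1_Y)=:u\hat{\tens}v$, an element of the endomorphism module $B\hat{\tens}D$ of $(\phi X,\psi Y)$ in $\fb\wh\boxt\fd$, where $B=\fb(\phi X,\phi X)$, $D=\fd(\psi Y,\psi Y)$, $u=\phi(1_X)$ and $v=\psi(1_Y)$.

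Next I would pass through the iterated middle-four-interchange (unshuffle) isomorphism $(B\hat{\tens}D)^{\hat{\tens}n}\cong B^{\hat{\tens}n}\hat{\tens}D^{\hat{\tens}n}$. Being assembled from the symmetry of the symmetric monoidal category $\Lambda\modulc_\LL$, it is a filtration-preserving isomorphism, and since $u$ and $v$ both have degree $0$ it carries $(u\hat{\tens}v)^{\hat{\tens}n}$ to $u^{\hat{\tens}n}\hat{\tens}v^{\hat{\tens}n}$ with no intervening sign. Hence it suffices to show $u^{\hat{\tens}n}\hat{\tens}v^{\hat{\tens}n}\in\cf^l(B^{\hat{\tens}n}\hat{\tens}D^{\hat{\tens}n})$ for all sufficiently large $n$.

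The key point is that $v$ sits in filtration degree $0$: the element $1_Y=1\in\Lambda=\1Q(Y,Y)$ is the unit of the filtered ring, hence lies in $\cf^0\Lambda$, and $\psi$ preserves the filtration, so $v=\psi(1_Y)\in\cf^0 D$; by \eqref{eq-Fl(xi1nMi)} it follows that $v^{\hat{\tens}n}\in\cf^0(D^{\hat{\tens}n})$ for every $n$. Now I invoke tensor convergence of $\phi$ at $X$ with target $l$: there is $N\in\NN$ with $u^{\hat{\tens}n}\in\cf^l(B^{\hat{\tens}n})$ for all $n\ge N$. Combining these, for $n\ge N$ we obtain
\[
u^{\hat{\tens}n}\hat{\tens}v^{\hat{\tens}n}\in\cf^l(B^{\hat{\tens}n})\hat{\tens}\cf^0(D^{\hat{\tens}n})\subset\cf^{l}(B^{\hat{\tens}n}\hat{\tens}D^{\hat{\tens}n}),
\]
which is exactly the condition defining tensor convergence of $\phi\wh\boxt\psi$ at $(X,Y)$.

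The step I expect to carry the real content, as opposed to bookkeeping, is the filtration control of $v$: since we assume nothing about $\psi$ beyond that it is a morphism of complete quivers, the powers $v^{\hat{\tens}n}$ do not become deep as $n\to\infty$, and the estimate survives only because $v$ lands in $\cf^0$ rather than in a strictly negative degree. Were $v$ only in $\cf^b$ with $b<0$, the factor $v^{\hat{\tens}n}$ would lower the filtration by $nb$, and one would then need the filtration of $u^{\hat{\tens}n}$ to grow at least linearly with slope $\ge|b|$ — a conclusion not available from tensor convergence together with the weak hypotheses (i)--(iii) on $\LL$. The remaining ingredients, namely the action of $\phi\wh\boxt\psi$ on identities and the compatibility of the unshuffle isomorphism with the filtration, are routine.
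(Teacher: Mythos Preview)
Your proof is correct and follows essentially the same approach as the paper: observe that $1_Y\in\cf^0\Lambda$ forces $\psi(1_Y)\in\cf^0\fd(\psi Y,\psi Y)$ and hence $\psi(1_Y)^{\hat\tens n}\in\cf^0$, then combine with tensor convergence of $\phi$ via the unshuffle identification $(u\hat\tens v)^{\hat\tens n}\cong u^{\hat\tens n}\hat\tens v^{\hat\tens n}$. Your explicit remark that the unshuffle is a filtration-preserving isomorphism with no sign (degree~$0$ elements) and your closing discussion of why $v\in\cf^0$ is essential are welcome clarifications but do not alter the argument.
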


\begin{proof}
Let $X\in S$, $Y\in Q$.
We have \(1_Y\in\cf^0(\1Q(Y,Y))\), hence, 
\(\psi(1_Y)\in\cf^0(\fd(\psi Y,\psi Y))\).
Therefore, 
\(\psi(1_Y)^{\hat\tens n}\in\cf^0[\fd(\psi Y,\psi Y)^{\hat\tens n}]\).
Thus, for $n$ large enough
\begin{multline*}
(\phi\wh\boxt\psi)(1_X\wh\boxt1_Y)]^{\hat\tens n} 
=\phi(1_X)^{\hat\tens n}\wh\boxt\psi(1_Y)^{\hat\tens n} \in
\cf^l[\fb(\phi X,\phi X)^{\hat\tens n}]\wh\boxt\cf^0
[\fd(\psi Y,\psi Y)^{\hat\tens n}]
\\
\subset\cf^l\{[\fb(\phi X,\phi X)\wh\boxt
\fd(\psi Y,\psi Y)]^{\hat\tens n}\},
\end{multline*}
that is, \(\phi\wh\boxt\psi\) is tensor convergent.
\end{proof}

By \propref{pro-nilpotent-full-monoidal-subcategory-augmented} the 
category $\acncCat$ is monoidal with respect to $\wh\boxt$.
Furthermore:

\begin{proposition}
The category $\cncCat$ is monoidal with respect to $\wh\boxt$.
\end{proposition}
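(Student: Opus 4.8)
The plan is to leverage the fact, just established, that $\acncCat$ is monoidal under $\wh\boxt$, and to reduce the proposition to a single new point: that $\wh\boxt$ sends a pair of (not necessarily augmentation preserving) cofunctors to a cofunctor. Since $\cncCat$ and $\acncCat$ have the same objects, the tensor product $\fa\wh\boxt\fc$ of completed conilpotent cocategories is already at hand, and the associativity and unit isomorphisms, being augmentation preserving cofunctors, are in particular morphisms of $\cncCat$. Their naturality with respect to arbitrary cofunctors, together with the pentagon and triangle coherences, holds already at the level of underlying morphisms of $\Lambda\modulc_\LL\Quiv$, because $(\Lambda\modulc_\LL\Quiv,\wh\boxt^I)$ was shown to be a symmetric monoidal category. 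So the only genuine task is to promote $\wh\boxt$ to a bifunctor $\cncCat\times\cncCat\to\cncCat$.

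Thus I would fix cofunctors $f:\fa\to\fb$ and $g:\fc\to\fd$ and check that $f\wh\boxt g$ is a cofunctor. Compatibility with comultiplication and counit is inherited from $f$ and $g$, since $\wh\boxt$ is a monoidal product of cocategories. What must be verified is the tensor convergence of
\[
\eta_{\fa\wh\boxt\fc}\cdot(f\wh\boxt g)-\1(f\wh\boxt g)\cdot\eta_{\fb\wh\boxt\fd}:\1(\fa\wh\boxt\fc)\to\fb\wh\boxt\fd.
\]
Under the canonical identification $\1(\fa\wh\boxt\fc)\cong\1\fa\wh\boxt\1\fc$ one has $\eta_{\fa\wh\boxt\fc}=\eta_\fa\wh\boxt\eta_\fc$ and $\1(f\wh\boxt g)=\1 f\wh\boxt\1 g$, so the interchange law rewrites the two terms as $(\eta_\fa\cdot f)\wh\boxt(\eta_\fc\cdot g)$ and $(\1 f\cdot\eta_\fb)\wh\boxt(\1 g\cdot\eta_\fd)$. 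Abbreviating $\phi_1=\eta_\fa\cdot f$, $\phi_2=\1 f\cdot\eta_\fb$, $\psi_1=\eta_\fc\cdot g$, $\psi_2=\1 g\cdot\eta_\fd$ (all four products $\phi_i\wh\boxt\psi_j$ carry the object map $\Ob f\times\Ob g$), the completion functor and hence $\wh\boxt$ being additive in each variable for a fixed object map, I may telescope
\[
\phi_1\wh\boxt\psi_1-\phi_2\wh\boxt\psi_2=\phi_1\wh\boxt(\psi_1-\psi_2)+(\phi_1-\phi_2)\wh\boxt\psi_2.
\]

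Now $\psi_1-\psi_2$ and $\phi_1-\phi_2$ are tensor convergent exactly because $g$ and $f$ are cofunctors. By \lemref{lem-phiboxtpsi-tensor-convergent}, and by its mirror image obtained from the symmetry of $\Lambda\modulc_\LL\Quiv$, each summand is tensor convergent: the first because its right factor is, the second because its left factor is. Since both summands share the object map $\Ob f\times\Ob g$, \lemref{lem-phipsi-tensor_convergent-phi+psi} makes their sum tensor convergent, which is precisely the defining condition for $f\wh\boxt g$ to be a cofunctor. The identities $(f\wh\boxt g)\cdot(f'\wh\boxt g')=(f\cdot f')\wh\boxt(g\cdot g')$ and $\id\wh\boxt\id=\id$ hold at the quiver level, giving the bifunctor and completing the proof. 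The crux is the telescoping step: neither $\phi_1,\phi_2$ nor $\psi_1,\psi_2$ need be tensor convergent individually, so one must route all the convergence through the differences $\phi_1-\phi_2$ and $\psi_1-\psi_2$, and it is the conjunction of the two lemmas that makes this possible.
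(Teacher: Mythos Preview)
Your proof is correct and follows essentially the same approach as the paper: reduce to showing $f\wh\boxt g$ is a cofunctor, telescope the difference $\eta(f\wh\boxt g)-\1(f\wh\boxt g)\eta$ into two summands each having one tensor-convergent factor, then apply \lemref{lem-phiboxtpsi-tensor-convergent} and \lemref{lem-phipsi-tensor_convergent-phi+psi}. The only cosmetic difference is the choice of telescoping: the paper writes $(\phi_1-\phi_2)\wh\boxt\psi_1+\phi_2\wh\boxt(\psi_1-\psi_2)$ where you write $\phi_1\wh\boxt(\psi_1-\psi_2)+(\phi_1-\phi_2)\wh\boxt\psi_2$, and you make explicit the appeal to the mirror image of the lemma via symmetry, which the paper leaves implicit.
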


\begin{proof}
First of all, the $\wh\boxt$\n-product of morphisms of complete quivers,
which preserve the grading and the filtration, preserves them as well.
Secondly, let \(f:\fA\to\fB\), \(g:\fC\to\fD\) be morphisms from 
$\cncCat$.
Then
\begin{multline*}
(\eta_\fA\wh\boxt\eta_\fC)\cdot(f\wh\boxt g) 
-(\1f\wh\boxt\1g)\cdot(\eta_\fB\wh\boxt\eta_\fD) 
=\eta_\fA f\wh\boxt\eta_\fC g -\1f\eta_\fB\wh\boxt\1g\eta_\fD
\\
=(\eta_\fA f-\1f\eta_\fB)\wh\boxt\eta_\fC g 
+\1f\eta_\fB\wh\boxt(\eta_\fC g -\1g\eta_\fD): 
\1\fA\wh\boxt\1\fC \to \fB\wh\boxt\fD.
\end{multline*}
By \lemref{lem-phiboxtpsi-tensor-convergent} both summands are tensor 
convergent.
Both include the mapping \(\Ob f\times\Ob g\) on objects.
By \lemref{lem-phipsi-tensor_convergent-phi+psi} their sum is tensor 
convergent, hence, \(f\wh\boxt g\) is a cofunctor.
\end{proof}

If cofunctor \(f:\fa\to\fb\in\ncCat\), then 
\(\hat f:\hat\fa\to\hat\fb\in\acncCat\).
Thus, completion induces a functor \(\ncCat\to\acncCat\).

\begin{example}
Consider \(\LL=\{0,\infty\}\) with the neutral element 0 and the rules 
$0<\infty$, \(\infty+\infty=\infty\).
Any abelian group $M$ is equipped with the $\LL$\n-filtration 
$\cf^0M=M$, $\cf^\infty M=0$.
Such a filtration is called \emph{discrete} by 
\cite[Examples 2.4, 2.17, Remark~2.8]{MR3856926}.
The canonical mapping of $\LL$\n-filtered abelian groups 
\(\yi:M\to\hat{M}\) is an isomorphism.
The same for graded abelian groups $M$.
We have \(\Lambda\modulc_\LL=\Lambda\modul\) for an arbitrary graded 
commutative ring $\Lambda$.
A morphism \(\phi:\1S\to\fa\in\Lambda\modul\Quiv\) is tensor convergent 
if for every $X\in S$ there exists $N\in\NN$ such that 
\([\phi(1_X)]^{\hat{\tens}N}=0\).
The category $\cncCat$ has the same objects as $\ncCat$, but larger 
sets of morphisms.
For \(f:\fa\to\fb\in\cncCat\) the map 
\(\eta_\fa\cdot f-\1 f\cdot\eta_\fb\) is tensor convergent, while for 
$f\in\ncCat$ it is 0.
\end{example}

Completion of $\Lambda$\n-modules commutes with direct sums in the 
following sense.
Let $M$, $N$ be filtered $\Lambda$\n-modules.
Their direct sum is determined by the diagram 
\(M\pile{\lTTo^{\pr_1}\\ \rTTo_{\inj_1}}M\oplus N
\pile{\rTTo^{\pr_2}\\ \lTTo_{\inj_2}}N\) 
with the standard relations between $\pr_i$ and $\inj_j$.
The same relations hold in the completion 
\(\wh M\pile{\lTTo^{\wh{\pr_1}}\\ \rTTo_{\wh{\inj_1}}}\wh{M\oplus N}
\pile{\rTTo^{\wh{\pr_2}}\\ \lTTo_{\wh{\inj_2}}}\wh N\).
Therefore, \(\wh{M\oplus N}\cong\wh M\oplus\wh N\).

Applying to the conilpotent cocategory $T\fa$ from 
\exaref{exa-T-conilpotent-cocategory} the completion construction of 
\secref{sec-Completion-coalgebras}, we get a functor
\(\wh{T\text-\,}:\widetilde{\Lambda\modul_\LL\Quiv}\to
\acncCat\to\Coalg_{\widetilde{\Lambda\modulc_\LL\Quiv}}\).
The decomposition \(T\fa=T^{<n}\fa\oplus T^{\ge n}\fa\), $n\ge1$, 
implies the decomposition 
\(\wh{T\fa}=\wh{T^{<n}\fa}\oplus\wh{T^{\ge n}\fa}\).

\begin{remark}
Recall that $T\fa$ is also a (free) category with the composition $\mu$,
and \(T^{\ge n}\fa\) is its ideal.
This can be expressed as the existence of top arrow $\mu'$ in 
the commutative diagram
\begin{diagram}
T^{\ge n}\fa\tens T\fa &\rTTo^{\mu'} &T^{\ge n}\fa
\\
\dMono<{i\tens1} &&\dMono>i
\\
T\fa\tens T\fa &\rTTo^\mu &T\fa
\end{diagram}
where $i$ is the split inclusion, and by another similar diagram.
Completing this square to the right square in
\begin{diagram}
\wh{T^{\ge n}\fa}\hat\tens\wh{T\fa} &\rTTo^{\wh{\yi\tens\yi}^{-1}} 
&\wh{T^{\ge n}\fa\tens T\fa} &\rTTo^{\wh{\mu'}} &\wh{T^{\ge n}\fa}
\\
\dMono<{\wh i\wh\tens1} &&\dMono<{\wh{i\tens1}} &&\dMono>{\wh i}
\\
\wh{T\fa}\hat\tens\wh{T\fa} &\rTTo^{\wh{\yi\tens\yi}^{-1}} 
&\wh{T\fa\tens T\fa} &\rTTo^{\wh\mu} &\wh{T\fa}
\end{diagram}
we get a commutative diagram.
Thus, \(\wh{T^{\ge n}\fa}\) is a two-sided ideal of 
\((\wh{T\fa},\mu_{\wh{T\fa}})\).
\end{remark}

\begin{remark}\label{rem-series-converges}
Consider $\Lambda\modul_\LL$\n-category \(\cd=\Lambda\modul_\LL\Quiv\) 
and its reflective subcategory of complete quivers 
\(\cc=\Lambda\modulc_\LL\Quiv\).
Let $\fA$ be a completed conilpotent cocategory and let $\fb$ be 
a filtered quiver.
The obvious embedding of uniform spaces 
\((\pr_k)_{k\ge0}:T\fb\rMono \prod_{k\ge0}\fb^{\tens k}\), where the 
product is taken in $\tilde\cd$, leads to embedding of completions 
\(\wh{T\fb}\subset\wh{\prod_{k\ge0}\fb^{\tens k}}
=\prod_{k\ge0}\wh{\fb^{\tens k}}\),
see \cite[Chap.~II, \S3, n.9, Cor.~1]{zbMATH03395017}.
For the last equation just notice that limits commute with limits.
Therefore, we have injections
\begin{equation}
(\text-\cdot\wh{\pr_k})_{k\ge0}: \cncCat(\fA,\wh{T\fb}) \rMono 
\tilde{\cc}(\fA,\wh{T\fb}) \rMono 
\tilde{\cc}(\fA,\prod_{k\ge0}\wh{\fb^{\tens k}}) 
=\prod_{k\ge0}\tilde{\cc}(\fA,\wh{\fb^{\tens k}}),
\label{eq-injections-cncCat-prod}
\end{equation}
where the first product is in $\tilde{\cc}$ and the second in $\Set$.
The completion \(\wh{T\fb}\) of 
\(\oplus_{k\ge0}\wh{\fb^{\tens k}}
\equiv\coprod_{k\ge0}\wh{\fb^{\tens k}}\) 
coincides with the closure of this subspace of the complete space 
\(\prod_{k\ge0}\wh{\fb^{\tens k}}\).
Hence, \(\wh{T\fb}\) consists of elements of certain degree
\(x=(x_0,x_1,x_2,\dots)\in\prod_{k\ge0}\wh{\fb^{\tens k}}\) such that 
for every \(l\in\LL\) there is $n\in\NN$ with the property that for all 
$k\ge n$ we have \(x_k\in\cf^l\wh{\fb^{\tens k}}\).
We may say also that the series \(\sum_{k=0}^\infty x_k\) converges.
This is equivalent to the previous condition since $x_k$ belong to 
different direct summands.
\end{remark}

\begin{lemma}\label{lem-abcd-fg-convergent-series}
Let $\fa$, $\fb$, $\fc$, $\fd$ be complete quivers.
Let \(f_k:\fa\to\fb\), $k\in\NN$, and \(g_m:\fc\to\fd\), $m\in\NN$, be 
morphisms of filtered quivers.
Assume that series \(f=\sum_{k\in\NN}f_k\) (resp. 
\(g=\sum_{m\in\NN}g_m\)) pointwise converges, that is, for each 
$x\in\fa^d$, $d\in\ZZ$, (resp. $y\in\fc^p$, $p\in\ZZ$) and for every 
\(l\in\LL\) we have \(f_k(x)\in\cf^l\fb\) (resp. \(g_m(y)\in\cf^l\fd\)) 
except for finite number of terms.
Then the tensor product \(f\wh\boxt g:\fa\wh\boxt\fc\to\fb\wh\boxt\fd\) 
is the sum of pointwise convergent series 
\(\sum_{k,m\in\NN}f_k\wh\boxt g_m\), that is, for any $x\in\fa^d$, 
$y\in\fc^p$, $d,p\in\ZZ$, and any \(l\in\LL\) we have 
\(f_k(x)\wh\boxt g_m(y)\in\cf^l(\fb\wh\boxt\fd)\) except for finite 
number of terms.
\end{lemma}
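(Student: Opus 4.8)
The plan is to fix elements $x\in\fa^d$ and $y\in\fc^p$ together with a target level $l\in\LL$, and to show that the set of pairs $(k,m)$ for which $f_k(x)\wh\boxt g_m(y)\notin\cf^l(\fb\wh\boxt\fd)$ is finite; this is exactly the asserted pointwise convergence. Everything takes place inside fixed hom-components, so the relevant component of $\fb\boxt\fd$ is a tensor product of filtered $\Lambda$-modules carrying the filtration \eqref{eq-Fl(xi1nMi)}. Since $f_k\wh\boxt g_m=\wh{f_k\boxt g_m}$ and the canonical map $\yi$ into the completion preserves the filtration, naturality of $\yi$ gives $f_k(x)\wh\boxt g_m(y)=\yi\bigl(f_k(x)\boxt g_m(y)\bigr)$, and it suffices to prove $f_k(x)\boxt g_m(y)\in\cf^l(\fb\boxt\fd)$ for all but finitely many $(k,m)$. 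By \eqref{eq-Fl(xi1nMi)} this holds whenever $f_k(x)\in\cf^{l_1}\fb$ and $g_m(y)\in\cf^{l_2}\fd$ with $l_1+l_2\ge l$, for then $f_k(x)\boxt g_m(y)\in\cf^{l_1+l_2}(\fb\boxt\fd)\subset\cf^l(\fb\boxt\fd)$.

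The key step is to extract uniform lower bounds for the filtration levels of the two families. Pointwise convergence of $g$ furnishes, for any fixed level, a finite set of indices outside of which $g_m(y)$ already lies in that level; each of the finitely many remaining $g_m(y)$ lies in some $\cf^{\mu_m}\fd$. Applying condition (ii), directedness of $\LL^\op$, finitely many times yields a single $\mu_*\in\LL$ bounding below both the fixed level and all the $\mu_m$, whence $g_m(y)\in\cf^{\mu_*}\fd$ for \emph{every} $m\in\NN$. Symmetrically, convergence of $f$ together with condition (ii) produces $\nu_*\in\LL$ with $f_k(x)\in\cf^{\nu_*}\fb$ for every $k\in\NN$.

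Next I would invoke condition (iii): applied to $\mu_*$ and $l$ it yields $c\in\LL$ with $\mu_*+c\ge l$, and applied to $\nu_*$ and $l$ it yields $c'\in\LL$ with $\nu_*+c'\ge l$. By pointwise convergence the sets $K_0=\{k\mid f_k(x)\notin\cf^c\fb\}$ and $M_0=\{m\mid g_m(y)\notin\cf^{c'}\fd\}$ are finite. A two-case analysis then covers every pair outside the finite product $K_0\times M_0$: if $k\notin K_0$ then $f_k(x)\in\cf^c\fb$ and $g_m(y)\in\cf^{\mu_*}\fd$, so $f_k(x)\boxt g_m(y)\in\cf^{c+\mu_*}(\fb\boxt\fd)\subset\cf^l(\fb\boxt\fd)$; if instead $m\notin M_0$ then $f_k(x)\in\cf^{\nu_*}\fb$ and $g_m(y)\in\cf^{c'}\fd$, so $f_k(x)\boxt g_m(y)\in\cf^{\nu_*+c'}(\fb\boxt\fd)\subset\cf^l(\fb\boxt\fd)$. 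Thus the only possibly exceptional pairs lie in $K_0\times M_0$, which is finite, establishing the convergence. That the sum of the series equals $f\wh\boxt g$ then follows from bilinearity and continuity of $\wh\boxt$, since in the complete group $\fb\wh\boxt\fd$ one has $\sum_{k,m}f_k(x)\wh\boxt g_m(y)=\bigl(\sum_k f_k(x)\bigr)\wh\boxt\bigl(\sum_m g_m(y)\bigr)=f(x)\wh\boxt g(y)$.

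The main obstacle is the honest construction of the uniform lower bounds $\mu_*$ and $\nu_*$: without them the ``cross terms'', where one index is small and the other large, cannot be controlled, and it is precisely here that pointwise convergence must be combined with conditions (ii) and (iii) on $\LL$. A naive estimate using convergence in each variable separately would leave these off-diagonal regions uncontrolled, so the interplay of the $\LL$-axioms is what makes the argument go through.
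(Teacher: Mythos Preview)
Your proof is correct and follows essentially the same strategy as the paper: isolate a finite rectangle $K_0\times M_0$ of possibly bad index pairs by playing the convergence of one series against filtration bounds coming from the other via the axioms on $\LL$. The only cosmetic difference is organizational: you first extract global lower bounds $\mu_*,\nu_*$ using condition~(ii) and then invoke~(iii), whereas the paper proceeds asymmetrically, interleaving conditions~(i) and~(iii) without naming uniform lower bounds; your symmetric version is arguably cleaner, but the content is the same.
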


\begin{proof}
For any $l\in\LL$ consider any decomposition $l=l'+l''$, $l',l''\in\LL$.
Let $x\in\fa^d$, $y\in\fc^p$, $d,p\in\ZZ$.
There exists $K\in\NN$ such that \(f_k(x)\in\cf^{l'}(\fb)\) for all 
$k\ge K$.
Consider $c_k\in\LL$ for $k<K$, $k\in\NN$, such that 
\(f_k(x)\in\cf^{c_k}(\fb)\).
There is \(\lambda''\in\LL\) such that \(\lambda''\ge l''\) and 
\(c_k+\lambda''\ge l\) for all $k<K$.
There exists $M\in\NN$ such that \(g_m(y)\in\cf^{\lambda''}(\fd)\) 
for all $m\ge M$.
Consider $b_m\in\LL$ for $m<M$, $m\in\NN$, such that 
\(g_m(y)\in\cf^{b_m}(\fd)\).
There is \(\lambda'\in\LL\) such that \(\lambda'\ge l'\) and 
\(\lambda'+b_m\ge l\) for all $m<M$.
There exists $N\in\NN$ such that $N\ge K$ and 
\(f_k(x)\in\cf^{\lambda'}(\fb)\) for all $k\ge N$.
For all pairs \((k,m)\in\NN^2\) except such that $k<N$ and $m<M$ we 
deduce from the above that 
\(f_k(x)\wh\boxt g_m(y)\in\cf^l(\fb\wh\boxt\fd)\).
\end{proof}

\begin{theorem}\label{thm-Cofunctors-1st-projection}
(i) Let \(\fA=\hat{\fa}\) be a completed conilpotent cocategory and let 
$\fb$ be a filtered quiver.
Cofunctors to the completed tensor cocategory \(\wh{T\fb}\) are in a 
natural bijection with the subset of quiver morphisms
\[ \cncCat(\fA,\wh{T\fb}) \rto\Phi \{\phi:\fA\to\hat{\fb}\in\tilde{\cc}
\mid \eta\cdot\phi:\Lambda\Ob\fA\to\hat{\fb}
\text{ is tensor convergent}\}, \quad f \mapsto f\cdot\wh{\pr_1}.
\]

(ii) There is a natural bijection
\[ \acncCat(\fA,\wh{T\fb}) \cong \{\phi:\fA\to\hat{\fb}\in\tilde{\cc}
\mid \eta\cdot\phi=0\}, \quad f \mapsto f\cdot\wh{\pr_1}.
\]
\end{theorem}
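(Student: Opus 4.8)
The plan is to transcribe the argument of \propref{pro-tensor-ncCat} into the completed filtered setting, replacing the eventual vanishing of $\bar\Delta^{(k)}$ by filtration estimates. First I check that $\Phi$ is well defined: for a cofunctor $f$ the difference $\eta_\fA\cdot f-\1f\cdot\eta_{\wh{T\fb}}$ is tensor convergent by the definition of cofunctor, and since $\eta_{\wh{T\fb}}=\wh{\inj_0}$ is annihilated by $\wh{\pr_1}$, composing with $\wh{\pr_1}$ leaves $\eta_\fA\cdot\phi$, which is still tensor convergent by \remref{rem-tensor-convergent}; thus $\phi=f\cdot\wh{\pr_1}$ lands in the indicated set. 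Injectivity of $\Phi$ follows from the cut comultiplication exactly as in \propref{pro-tensor-ncCat}: iterating $f\cdot\Delta=\Delta\cdot(f\hat\tens f)$ gives $f\cdot\Delta^{(k)}=\Delta^{(k)}_\fA\cdot f^{\hat\tens k}$, and since $\Delta^{(k)}_{\wh{T\fb}}\cdot\wh{\pr_1}^{\hat\tens k}=\wh{\pr_k}$, one obtains the component formula $f\cdot\wh{\pr_k}=\Delta^{(k)}_\fA\cdot\phi^{\hat\tens k}$. By \remref{rem-series-converges} the components inject into $\prod_{k\ge0}\tilde\cc(\fA,\wh{\fb^{\tens k}})$, so $f$ is recovered from $\phi$.

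For surjectivity, given an admissible $\phi$ I would define candidate components $f_k=\Delta^{(k)}_\fA\cdot\phi^{\hat\tens k}$, followed by the lax monoidal comparison $\hat\fb^{\hat\tens k}\to\wh{\fb^{\tens k}}$ of the completion functor (\propref{pro-nat-trans-phi}). Each $f_k$ preserves the filtration with a bound \emph{uniform in} $k$, because $\Delta^{(k)}$, $\phi^{\hat\tens k}$ and the comparison map all do, so that $f_k(\cf^\lambda\fA)\subset\cf^\lambda\wh{\fb^{\tens k}}$. By \remref{rem-series-converges} it then suffices to prove, for each $x\in\fA^d$ and each $l\in\LL$, that $f_k(x)\in\cf^l\wh{\fb^{\tens k}}$ for all but finitely many $k$.

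This convergence is the main obstacle, and I would reduce it to the dense subquiver $\fa\subset\fA$. Choosing $x'\in\fa$ with $x-x'\in\cf^l\fA$ (density of $\fa$ in its completion $\fA$), the uniform filtration bound gives $f_k(x-x')\in\cf^l$ for \emph{all} $k$, so it remains to treat $x'\in\fa$. For such $x'$ there is $n$ with $\bar\Delta^{(n+1)}x'=0$, so every term of $\Delta^{(k)}x'$ has at most $n$ bar-factors and hence a run of at least $(k-n)/(n+1)$ consecutive unit factors $\eta(1_X)$ at a single object $X$; applying $\phi^{\hat\tens k}$ turns such a run into a tensor power $[(\eta\cdot\phi)(1_X)]^{\hat\tens m}$, which by tensor convergence of $\eta\cdot\phi$ lies in arbitrarily deep filtration once $k$ is large. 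Combining this with the bounded, $x'$-dependent filtration of the finitely many remaining factors through the estimate of \propref{pro-nat-trans-phi}, and using condition (iii) on $\LL$ to absorb that bounded level, forces $f_k(x')\in\cf^l$ for $k\gg0$. Since $\cf^l$ is a subgroup, $f_k(x)=f_k(x')+f_k(x-x')\in\cf^l$ for $k\gg0$, as required; taking the maximum over the finitely many objects and terms makes the threshold uniform.

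It then remains to check that the assembled $f\colon\fA\to\wh{T\fb}$ is a cofunctor. Comultiplication compatibility follows from $f\cdot\wh{\pr_k}=\Delta^{(k)}_\fA\cdot\phi^{\hat\tens k}$ together with the cut comultiplication of $\wh{T\fb}$, computed as in \propref{pro-tensor-ncCat}; counitality reduces to $\Ob f=\Ob\phi$ on the degree-$0$ component; and tensor convergence of $\eta_\fA\cdot f-\1f\cdot\eta_{\wh{T\fb}}$ follows because, by the component formula and $\eta_\fA$ being a coalgebra morphism, its value on $1_X$ is $\sum_{k\ge1}[(\eta\cdot\phi)(1_X)]^{\hat\tens k}$, whose $\hat\tens$-powers involve only tensor powers $[(\eta\cdot\phi)(1_X)]^{\hat\tens m}$ with $m\ge n$, again controlled by tensor convergence of $\eta\cdot\phi$ and by \lemref{lem-phipsi-tensor_convergent-phi+psi}. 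The two constructions are mutually inverse, since $\Phi(f)=f\cdot\wh{\pr_1}=\phi$ (as $\Delta^{(1)}=\id$) while the component formula recovers $f$ from $\phi$, and naturality in $\fa$ and $\fb$ is routine. Finally, part (ii) is the restriction of (i) to the subset $\eta\cdot\phi=0$: this condition is trivially tensor convergent and, by $\eta_\fA\cdot f\cdot\wh{\pr_k}=[(\eta\cdot\phi)(1_X)]^{\hat\tens k}$, is equivalent (via the case $k=1$) to $\eta_\fA\cdot f=\1f\cdot\eta_{\wh{T\fb}}$, that is, to $f$ preserving the augmentation.
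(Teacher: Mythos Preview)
Your proof is correct and follows essentially the same route as the paper: the component formula via diagram~\eqref{dia-f:a-whTb}, the density reduction to $\fa$ combined with a pigeonhole argument on the runs of $\eta$-factors in $\Delta^{(k)}x'$, and the verification that the resulting $f$ is a cofunctor. One place where your sketch is thinner than the paper's argument is the comultiplication compatibility $f\cdot\Delta=\Delta\cdot(f\hat\tens f)$: this is an equality in $\wh{T\fb}\hat\tens\wh{T\fb}$, and your appeal to ``as in \propref{pro-tensor-ncCat}'' does not quite suffice, since in the uncompleted case the sums are finite while here both sides are infinite series whose summation orders differ; the paper invokes (a $\hat\tens$-variant of) \lemref{lem-abcd-fg-convergent-series} to conclude that both convergent series have the same sum, and you should either do the same or argue by continuity and density of $\fa\subset\fA$.
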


\begin{proof}
(i) First we remark that the corestriction \(f\cdot\wh{\pr_k}\) of any 
morphism \(f:\fA\to\wh{T\fb}\in\cncCat\) is uniquely determined by the 
composition 
\(\check{f}=f\cdot\wh{\pr_1}:\fA\to\wh\fb\in\Lambda\modulc_\LL\Quiv\) 
as the following commutative diagram shows
\begin{diagram}[width=4em,h=2.3em,LaTeXeqno]
&&\wh{T\fb} &\rTTo^{\wh{\pr_k}} &\wh{\fb^{\tens k}}
\\
&\ruTTo^f &\dTTo<{\Delta^{(k)}} &= &\dTTo>{\wh{\yi^{\tens k}}}
\\
\fA &= &\wh{T\fb}^{\hat\tens k} &\rTTo^{\wh{\pr_1}^{\hat\tens k}} 
&\wh{\fb}^{\hat\tens k}
\\
&\rdTTo<{\Delta^{(k)}} &\uTTo<{f^{\hat\tens k}} 
&\ruTTo^=_{\check{f}^{\hat\tens k}} &
\\
&&\fA^{\hat\tens k} &&
\label{dia-f:a-whTb}
\end{diagram}
In fact, the obvious equation
\[ \pr_k =\bigl(T\fb \rTTo^{\Delta^{(k)}} T\fb^{\tens k} 
\rTTo^{\pr_1^{\tens k}} \fb^{\tens k}\bigr)
\]
implies commutativity of the exterior of the diagram
\begin{diagram}[h=2.25em]
T\fb &\rTTo^\yi &\wh{T\fb} &&\rTTo^{\wh{\pr_k}} &&\wh{\fb^{\tens k}}
\\
\dTTo<{\Delta^{(k)}} &= &\dTTo<{\wh{\Delta^{(k)}}} 
&\rdTTo<=>{\Delta^{(k)}} &&&\dTTo>{\wh{\yi^{\tens k}}}
\\
T\fb^{\tens k} &\rTTo^\yi &\wh{T\fb^{\tens k}} 
&\rTTo^{\wh{\yi^{\tens k}}} &\wh{\wh{T\fb}^{\tens k}} 
&\rTTo^{\wh{\wh{\pr_1}^{\tens k}}} &\wh{\wh{\fb}^{\tens k}}
\end{diagram}
Therefore the trapezium commute which is the upper right square 
in diagram~\eqref{dia-f:a-whTb}.

So given \(\phi:\fA\to\hat{\fb}\in\tilde{\cc}\) such that 
\(\eta\cdot\phi:\Lambda\Ob\fA\to\hat{\fb}\) is tensor convergent, let us
prove that \(f:\fA\to\wh{T\fb}\) with \(\Ob f=\Ob\phi\) and for any 
element \(x\in\fA^d\), \(d\in\ZZ\), the value of $f(x)$ given by 
the (convergent) series
\begin{multline}
\sum_{k\ge0} 
\wh{\yi^{\tens k}}^{-1}\phi^{\hat{\tens}k}(\Delta_\fA^{(k)}x) 
=\sum_{k\ge0} \hat\yi^{-1}\phi(x_{(1)})\htdht\hat\yi^{-1}\phi(x_{(k)})
\\
=(x)\eps\cdot(\1\phi)\cdot\inj_0 
+\sum_{k\ge1} \hat\yi^{-1}\phi(x_{(1)})\htdht\hat\yi^{-1}\phi(x_{(k)}),
\label{eq-phix1-phixk}
\end{multline}
is a cofunctor.
Here \(x_{(1)}\htdht x_{(k)}\equiv\Delta^{(k)}(x)\).
Convergence means that for every $l\in\LL$ the $k$\n-th term except for 
a finite number of terms belongs to \(\cf^l\wh{\fb^{\tens k}}\) and 
will be proven now.

Assume that \(\fA=\hat{\fa}\) where cocategory $\fa$ is conilpotent and 
\(\phi:\fA\to\hat{\fb}\in\tilde{\cc}\) is such that 
\(\eta_\fA\cdot\phi\) tensor converges.
Replacing $\fa$ with the conilpotent cocategory \(\yi(\fa)\) we may 
assume that \(\yi:\fa\hookrightarrow\hat{\fa}\) is an embedding.
We have to prove that \eqref{eq-phix1-phixk} converges for all 
\(x\in\fA^d\).
It suffices to assume that \(x\in\bar{\fA}(X,Y)^d\).
Use the notation 
\(x_{(\bar{1})}\htdht x_{(\bar{k})}\equiv\bar{\Delta}_\fA^{(k)}(x)\).
The counital comultiplication $\Delta$ is recovered from the reduced 
comultiplication $\bar\Delta$ via the formula
\begin{equation}
\Delta_\fA^{(k)}(x) =\sum_{j:S\hookrightarrow\mb k}^{S\ne\emptyset} 
\hat{\tens}^{i\in\mb k} (x_{(\overline{j^{-1}i})})^{\chi(i\in jS)},
\label{eq-Delta-bar-Delta}
\end{equation}
where \(y^0=\eta_\fA(1)\), \(y^1=y\), and the summation extends over all
non-empty subsets $S$ of \(\mb k=\{1,2,\dots,k\}\).
By convention, \(\bar{\Delta}_\fA^{(1)}x=x\).

Assume for a moment that \(x\in\bar{\fa}^d\), hence, for some $n>0$ 
\(\bar{\Delta}_\fa^{(n)}(x)=0\).
So the summation in \eqref{eq-Delta-bar-Delta} goes over $S$ with 
\(|S|<n\).
The list of tuples of objects \(X=Z_0,Z_1,\dots,Z_{m-1},Z_m=Y\), which 
occur in
\[ \bar{\Delta}_\fa^{(m)}(x) \in 
\oplus_{Z_1,\dots,Z_{m-1}} \fa(X,Z_1)\tens\fa(Z_1,Z_2)\tdt\fa(Z_{m-1},Y)
\]
for $1\le m<n$, is finite.
Form a single list \(Z_1,\dots,Z_q\) of \(Z_i\) occurring in all such 
decompositions and denote 
\(Y_q=\hat\fb(\phi Z_q,\phi Z_q)\in\Lambda\modulc_\LL\).
Let $P\in\NN$ such that for every $p\in\NN$, $p\ge P$, we have 
\(\phi\eta(1_{Z_q})^{\tens p}\in\cf^0(Y_q^{\tens p})\).
Let \(b_p\in\LL\) be such that 
\(\phi\eta(1_{Z_q})^{\tens p}\in\cf^{b_p}(Y_q^{\tens p})\) for any 
\(1\le q\le Q\) and any $p<P$.
Let \(b\le b_p\) for all $p<P$.
Let $c\in\LL$ be such that all factors \(x_{(\bar i)}\) of all summands 
of all \(\Delta^{(m)}(x)\), $0\le m<n$, belonging to 
\(\fa(Z_{i-1},Z_i)\), were, in fact, in \(\cf^c\fa(Z_{i-1},Z_i)\).
There is \(\lambda\in\LL\) such that \((n-1)c+nb+\lambda\ge l\).
There is $N\in\NN$ such that $k\ge N$ implies 
\(\phi\eta(1_{Z_q})^{\tens k}\in\cf^\lambda(Y_q^{\tens k})\).
When \(k>nN\) at least one factor of this type occurs in 
\(\Delta^{(k)}(x)\).
Hence, for \(k>nN\) we have 
\(\phi^{\tens k}\Delta^{(k)}(x)\in\cf^\lambda(\hat\fb(\phi X,\phi Y))\).

Consider now an arbitrary \(x\in\bar{\fA}(X,Y)^d\).
Given $l\in\LL$ there is an element \(x'\in\bar{\fa}(X,Y)^d\) such that 
\(x-x'\in\cf^l\bar{\fA}(X,Y)^d\).
Then for all $k\ge1$ we have 
\(\phi^{\hat{\tens}k}(\Delta_\fA^{(k)}x)
-\yi[\phi^{\tens k}(\Delta_\fa^{(k)}x')]
\in\cf^l(\hat{\fb}^{\hat{\tens}k})\).
Since 
\(\phi^{\tens k}(\Delta_\fa^{(k)}x')\in\cf^l(\hat{\fb}^{\tens k})\) 
for large $k$ we deduce that 
\(\phi^{\hat{\tens}k}(\Delta_\fA^{(k)}x)
\in\cf^l(\hat{\fb}^{\hat{\tens}k})\) 
which proves the convergence of \eqref{eq-phix1-phixk} and gives a 
well-defined map of filtered quivers \(f:\fA\to\wh{T\fb}\) 
with \(\Ob f=\Ob\phi\).

Let us prove that $f$ is a morphism of cocategories.
Due to coassociativity of $\Delta_\fA$ we may write a convergent series
\begin{equation}
(x)f\cdot\Delta =\sum_{k\in\NN}\sum_{m+n=k}^{m,n\in\NN} 
\phi^{\hat\tens m}(\Delta^{(m)}x_{(1)})
\hat\tens\phi^{\hat\tens n}(\Delta^{(n)}x_{(2)}).
\label{eq-xf-Delta}
\end{equation}
Variant of \lemref{lem-abcd-fg-convergent-series} for $\hat\tens$ 
gives another convergent series
\[ (x)\Delta\cdot(f\hat\tens f) =\sum_{m,n\in\NN} 
\phi^{\hat\tens m}(\Delta^{(m)}x_{(1)})
\hat\tens\phi^{\hat\tens n}(\Delta^{(n)}x_{(2)})
\]
with the same terms as in \eqref{eq-xf-Delta} but with different 
summation order.
Since the sum of a series convergent in our sense does not depend on the
order of summation, we conclude that 
\(f\cdot\Delta=\Delta\cdot(f\hat\tens f)\).

The morphism $f$ preserves the counit due to \eqref{eq-phix1-phixk}:
\[ f\cdot\eps =f\cdot\wh{\pr_0} =\eps\cdot(\1\phi) =\eps\cdot(\1f).
\]

The map $f$ is a cofunctor since
\[ (1_X)\eta_\fA\cdot f -(1_X)\1f\cdot\inj_0\cdot\yi 
=\sum_{k\ge1} [\phi\eta(1_X)]^{\hat{\tens}k}.
\]
By definition, for any $l\in\LL_+$ there exists $N\in\NN$ such that for 
every $n\ge N$ we have 
\([\phi\eta(1_X)]^{\hat{\tens}n}\in\cf^l[\wh{\fb^{\tens n}}]\).
Clearly, all terms of
\[ [(1_X)\eta_\fA\cdot f -(1_X)\1f\cdot\inj_0\cdot\yi]^{\hat{\tens}n} 
=\Bigl[\sum_{k\ge1} [\phi\eta(1_X)]^{\hat{\tens}k}\Bigr]^{\hat{\tens}n}
\]
are in \(\cf^l[\wh{T\fb}]\).
Summing up, a map
\[ \Psi: \{\phi:\fA\to\hat{\fb}\in\tilde{\cc}\mid 
\eta\cdot\phi:\Lambda\Ob\fA\to\hat{\fb}\text{ is tensor convergent}\} 
\to \cncCat(\fA,\wh{T\fb})
\]
is constructed.

Let us prove that for any cofunctor \(f:\fA\to\wh{T\fb}\) the map 
\(\eta\cdot\check{f}=\eta\cdot f\cdot\wh{\pr_1}:\1\fA\to\wh\fb\) 
is tensor convergent.
We know that 
\(\eta_\fA\cdot f-\1f\cdot\inj_0\cdot\yi:\1\fA\to\wh{T\fb}\) is tensor 
convergent.
Therefore, 
\((\eta_\fA\cdot f-\1f\cdot\inj_0\cdot\yi)\cdot
\wh{\pr_1}=\eta\cdot\check{f}\)
is tensor convergent by \remref{rem-tensor-convergent}.
Thus, the claimed map \(\Phi:f\mapsto\check{f}\) is constructed.

Clearly, \(\Phi\Psi(\phi)=\phi\).
In particular, $\Phi$ is surjective.
As the reasoning at the beginning of the proof shows, 
injection~\eqref{eq-injections-cncCat-prod} factorizes through $\Phi$, 
namely, \((\text-\cdot\wh{\pr_k})_{k\ge0}=\Phi\cdot\Xi\), where
\[ \Xi: \tilde{\cc}(\fA,\wh{\fb}) \rMono 
\prod_{k\ge0}\tilde{\cc}(\fA,\wh{\fb^{\tens k}}), \qquad 
\phi \mapsto (\Delta_\fA^{(k)}\cdot\phi^{\hat\tens k})_{k\ge0}.
\]
Therefore, $\Phi$ is an injection as well.
We conclude that $\Phi$ is bijective and \(\Psi=\Phi^{-1}\).

(ii) follows from (i).
\end{proof}

\begin{corollary}\label{cor-Cofunctors-aTb}
Let $\fa$ be a conilpotent cocategory and let $\fb$ be a filtered quiver.

(i) Cofunctors to the completed tensor cocategory \(\wh{T\fb}\) are in 
a natural bijection with the subset of quiver morphisms
\[ \cncCat(\hat\fa,\wh{T\fb}) \cong \{\phi:\fa\to\hat{\fb}\in\tilde{\cc}
\mid \eta\cdot\phi:\Lambda\Ob\fa\to\hat{\fb}
\text{ is tensor convergent}\}, \quad f \mapsto f\cdot\wh{\pr_1}.
\]

(ii) There is a natural bijection
\[ \acncCat(\hat\fa,\wh{T\fb}) \cong\{\phi:\fa\to\hat{\fb}\in\tilde{\cc}
\mid \eta\cdot\phi=0\}, \quad f \mapsto f\cdot\wh{\pr_1}.
\]
\end{corollary}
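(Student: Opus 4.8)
The plan is to deduce both parts directly from \thmref{thm-Cofunctors-1st-projection}, applied to the completed conilpotent cocategory $\fA=\hat\fa$, invoking only the universal property of completion. That theorem already supplies natural bijections
\[ \cncCat(\hat\fa,\wh{T\fb}) \cong \{\psi:\hat\fa\to\hat\fb \mid \eta\cdot\psi\text{ is tensor convergent}\}, \qquad f\mapsto f\cdot\wh{\pr_1}, \]
and, in the augmentation preserving case,
\[ \acncCat(\hat\fa,\wh{T\fb}) \cong \{\psi:\hat\fa\to\hat\fb \mid \eta\cdot\psi=0\}, \qquad f\mapsto f\cdot\wh{\pr_1}. \]
So the task reduces to reindexing the right hand sides by morphisms issuing from $\fa$ itself instead of from its completion $\hat\fa$.

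To do this I would use that $\cc=\Lambda\modulc_\LL\Quiv$ is a reflective subcategory of $\cd=\Lambda\modul_\LL\Quiv$, the completion functor $\hat{\text-}$ being left adjoint to the inclusion (an idempotent monad, as recorded earlier and in \apref{ap-A-Reflective}). Since the target $\hat\fb$ is complete, the unit $\yi_\fa:\fa\to\hat\fa$ induces a bijection $\psi\mapsto\yi_\fa\cdot\psi$ from morphisms $\hat\fa\to\hat\fb$ to filtered quiver morphisms $\fa\to\hat\fb$, every such $\fa\to\hat\fb$ factoring uniquely through $\yi_\fa$. It then remains to check that this bijection carries the two side conditions to the corresponding conditions on $\phi=\yi_\fa\cdot\psi$. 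Here I would note that completion leaves the object set untouched, so $\Ob\hat\fa=\Ob\fa=S$ and $\Lambda\Ob\hat\fa=\Lambda\Ob\fa=\Lambda S$, and that naturality of $\yi$ applied to $\eta_\fa:\Lambda S\to\fa$, combined with completeness of $\Lambda S$ (so that $\yi_{\Lambda S}$ is the identity), gives $\eta_{\hat\fa}=\eta_\fa\cdot\yi_\fa$. Consequently $\eta_{\hat\fa}\cdot\psi=\eta_\fa\cdot\yi_\fa\cdot\psi=\eta_\fa\cdot\phi$ as maps $\Lambda S\to\hat\fb$; the two are literally the same map, so one is tensor convergent (respectively zero) exactly when the other is, and the bijection restricts to the required subsets.

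Composing the two bijections then gives (i): a cofunctor $f:\hat\fa\to\wh{T\fb}$ is carried first to $\check f=f\cdot\wh{\pr_1}:\hat\fa\to\hat\fb$ and then to $\yi_\fa\cdot\check f:\fa\to\hat\fb$, the restriction of $\check f$ along $\yi_\fa$, which is precisely the assignment $f\mapsto f\cdot\wh{\pr_1}$ under the identification of $\fa$ with its image in $\hat\fa$; naturality of the composite follows from that of its two factors. Part (ii) will be identical, retaining throughout the stricter condition $\eta\cdot\psi=0$ and the category $\acncCat$ in place of $\cncCat$. The only nonformal point — and thus the main, if modest, obstacle — is the compatibility verified in the previous paragraph: that the augmentation-and-first-projection data of a cofunctor computed out of $\hat\fa$ coincides with that computed out of $\fa$. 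Once $\eta_{\hat\fa}=\eta_\fa\cdot\yi_\fa$ and $\Ob\hat\fa=\Ob\fa$ are in hand, the transfer of the tensor convergence condition is immediate, and the corollary follows.
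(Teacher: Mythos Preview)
Your proposal is correct and follows essentially the same route as the paper: apply \thmref{thm-Cofunctors-1st-projection} with $\fA=\hat\fa$ and then use the universal property of completion to pass from morphisms out of $\hat\fa$ to morphisms out of $\fa$. The paper's one-line proof invokes \propref{pro-uniformly-continuous}(a) together with the universality of completion, whereas you appeal directly to the reflectivity of $\Lambda\modulc_\LL\Quiv$ in $\Lambda\modul_\LL\Quiv$ (already recorded in \secref{sec-Complete-cocategories}); these amount to the same thing, and your explicit verification that $\eta_{\hat\fa}=\eta_\fa\cdot\yi_\fa$ carries the side conditions across is a welcome elaboration of what the paper leaves implicit.
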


\begin{proof}
Follows from \propref{pro-uniformly-continuous}(i) and 
\thmref{thm-Cofunctors-1st-projection} by universality property of 
the completion.
\end{proof}

\begin{definition}\label{def-cofunctor-aB}
Let \(\fa\in\ncCat\), \(\fB\in\cncCat\).
A \emph{cofunctor} \(f:\fa\to\fB\) is a morphism from 
\(\Lambda\modul_\LL\Quiv\) compatible with the comultiplication and 
the counit in the sense that
\begin{equation}
\begin{diagram}[inline]
\fa &&\rTTo^f &&\fB
\\
\dTTo<\Delta &&= &&\dTTo>\Delta
\\
\fa\tens\fa &\rTTo^{f\tens f} &\fB\tens\fB &\rTTo^\yi &\wh{\fB\tens\fB}
\end{diagram}
\qquad,\qquad
\begin{diagram}[inline]
\fa &\rTTo^f &\fB
\\
\dTTo<\eps &= &\dTTo>\eps
\\
\Lambda\fa &\rTTo^{\Lambda f} &\Lambda\fB
\end{diagram}
\label{eq-dia-aB-Delta-epsilon}
\end{equation}
and such that \(\eta_\fa\cdot f-\1f\cdot\eta_\fB\) is tensor convergent.
\end{definition}

An explanation of the above is given by

\begin{proposition}\label{pro-restriction-universality-ff'}
Let \(\fa\in\ncCat\), \(\fB\in\cncCat\).
The restriction and universality of the completion give mutually 
inverse bijections
\[ \cncCat(\hat{\fa},\fB) \longleftrightarrow 
\{\text{ cofunctors } \fa\to\fB\}.
\]
\end{proposition}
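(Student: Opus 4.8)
The plan is to reduce the whole statement to the universal property of the completion, so that the only genuine content is the matching of the three defining clauses of a cofunctor. Since the completion $\yi\colon\fa\to\hat\fa$ is the unit of an idempotent monad whose algebras are exactly the complete quivers, and since $\fB$ is complete, precomposition with $\yi$ is a bijection between morphisms of complete quivers $\tilde f\colon\hat\fa\to\fB$ and morphisms of filtered quivers $f\colon\fa\to\fB$, with $f=\yi\cdot\tilde f$. These two assignments (universality, extending $f$ to $\tilde f$; and restriction, $\tilde f\mapsto\yi\cdot\tilde f$) are mutually inverse at the quiver level by the uniqueness clause, and $\Ob\yi=\id$ forces $\Ob f=\Ob\tilde f$ and $\1f=\1\tilde f$. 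So it remains to prove that $f$ satisfies the conditions of \defref{def-cofunctor-aB} if and only if $\tilde f$ is a cofunctor in $\cncCat$; mutual inverseness for cofunctors is then inherited from the quiver level.

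For the comultiplication I would argue by uniqueness into a complete target. Both $\tilde f\cdot\Delta_\fB$ and $\Delta_{\hat\fa}\cdot(\tilde f\hat\tens\tilde f)$ are morphisms $\hat\fa\to\fB\hat\tens\fB=\wh{\fB\tens\fB}$, and since the codomain is complete they coincide iff they agree after precomposition with $\yi$. Starting from $\Delta_{\hat\fa}=\wh{\Delta_\fa}\cdot\wh{\yi\tens\yi}$ and applying naturality of $\yi$ twice, one records
\[
\yi\cdot\Delta_{\hat\fa}=\Delta_\fa\cdot(\yi\tens\yi)\cdot\yi_{\hat\fa\tens\hat\fa}.
\]
Composing on the right with $\tilde f\hat\tens\tilde f$ and using the naturality square $\yi_{\hat\fa\tens\hat\fa}\cdot(\tilde f\hat\tens\tilde f)=(\tilde f\tens\tilde f)\cdot\yi_{\fB\tens\fB}$ (valid because $\hat\tens$ of morphisms is induced from $\tens$ by the completion functor) together with $(\yi\tens\yi)\cdot(\tilde f\tens\tilde f)=f\tens f$, the right-hand side collapses to $\Delta_\fa\cdot(f\tens f)\cdot\yi_{\fB\tens\fB}$, while the left-hand side gives $\yi\cdot\tilde f\cdot\Delta_\fB=f\cdot\Delta_\fB$. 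Hence the comultiplication equation for $\tilde f$ holds exactly when the first diagram of \defref{def-cofunctor-aB} holds for $f$.

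The counit and augmentation reduce to near-identities by the same technique. From $\eps_{\hat\fa}=\wh{\eps_\fa}$ and naturality of $\yi$ (with $\1\fa$ complete, as $\Lambda$ is complete, so $\yi_{\1\fa}=\id$) one gets $\yi\cdot\eps_{\hat\fa}=\eps_\fa$, so counitality of $\tilde f$ into the complete quiver $\1\fB$ is equivalent, after precomposition with $\yi$, to the second diagram of \defref{def-cofunctor-aB}. Similarly $\eta_{\hat\fa}=\wh{\eta_\fa}=\eta_\fa\cdot\yi$, whence $\eta_{\hat\fa}\cdot\tilde f=\eta_\fa\cdot f$, and with $\1\tilde f=\1f$ the two difference maps $\eta_{\hat\fa}\cdot\tilde f-\1\tilde f\cdot\eta_\fB$ and $\eta_\fa\cdot f-\1f\cdot\eta_\fB$ from $\1\fa=\1\hat\fa$ to $\fB$ are literally the same map; so one is tensor convergent iff the other is. Combining the three equivalences yields the claimed bijection. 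I expect the comultiplication step to be the main obstacle: everything hinges on tracking the definition $\Delta_{\hat\fa}=\wh{\Delta_\fa}\cdot\wh{\yi\tens\yi}$ through the naturality of $\yi$ and the behaviour of $\hat\tens$ on morphisms, whereas the counit and tensor-convergence clauses become automatic once these naturality squares are established.
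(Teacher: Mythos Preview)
Your proof is correct and follows essentially the same approach as the paper's: both arguments use the universal property of the completion $\yi:\fa\to\hat\fa$ to reduce the bijection to checking that the three cofunctor conditions (comultiplication, counit, tensor convergence) transfer along $\yi$. The paper packages the comultiplication and counit checks into a single pair of commutative diagrams~\eqref{eq-dia-aahatB-Delta-epsilon} whose left rectangles express that $\yi$ itself is a cofunctor and whose right squares are the conditions on $\tilde f$, then invokes universality to pass from the commuting exterior to the commuting right square; you unpack the same naturality squares equationally. You are in fact slightly more explicit than the paper about the tensor-convergence clause (observing that $\eta_{\hat\fa}\cdot\tilde f-\1\tilde f\cdot\eta_\fB$ and $\eta_\fa\cdot f-\1f\cdot\eta_\fB$ are literally the same map $\1\fa\to\fB$), which the paper leaves implicit.
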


\begin{proof}
Diagrams~\eqref{eq-dia-aB-Delta-epsilon} for \(f=\yi:\fa\to\hat{\fa}\) 
take the form of left rectangles below
\begin{equation}
\begin{diagram}[inline,h=2.1em]
\fa &&\rTTo^\yi &&\hat{\fa} &\rTTo^g &\fB
\\
\dTTo<\Delta &&= &\ldTTo^{\hat{\Delta}}_= &\dTTo>{\Delta_{\hat{\fa}}} 
&&\dTTo>{\Delta_\fB}
\\
\fa\tens\fa &\rTTo^\yi &\wh{\fa\tens\fa} &\rTTo^{\wh{\yi\tens\yi}} 
&\wh{\hat\fa\tens\hat\fa} &\rTTo^{\wh{g\tens g}} &\wh{\fB\tens\fB}
\end{diagram}
\quad,\quad
\begin{diagram}[inline,h=2.1em]
\fa &\rTTo^\yi &\hat{\fa} &\rTTo^g &\fB
\\
\dTTo<\eps &= &\dTTo>{\hat\eps} &&\dTTo>\eps
\\
\Lambda\fa &\rEq^{\Lambda\yi}_\yi &\Lambda\fa &\rTTo^{\Lambda g} 
&\Lambda\fB
\end{diagram}
\quad.
\label{eq-dia-aahatB-Delta-epsilon}
\end{equation}
and they obviously commute, proving that \(\yi:\fa\to\hat{\fa}\) is 
a cofunctor.
The restriction map is 
\(\cncCat(\hat{\fa},\fB)\to\{\text{ cofunctors } \fa\to\fB\}\), 
\(g\mapsto\yi\cdot g\).

The inverse map is constructed as follows.
Let \(f:\fa\to\fB\in\Lambda\modul_\LL\Quiv\) satisfy 
\eqref{eq-dia-aB-Delta-epsilon}.
Then there is a unique 
\(g=\tilde f:\hat{\fa}\to\fB\in\Lambda\modul_\LL\Quiv\) such that 
\(f=\bigl(\fa\rto\yi \hat{\fa}\rto g \fB\bigr)\).
If $f$ is a cofunctor, the exterior rectangles of 
\eqref{eq-dia-aahatB-Delta-epsilon} commute.
By universality property of \(\yi:\fa\to\hat{\fa}\), the right squares 
of \eqref{eq-dia-aahatB-Delta-epsilon} commute as well.
\end{proof}

Introduce the notation \(\hat T\fa=\wh{T\fa}\).
We have shown in the above proof that any cofunctor 
\(f:T\fa\to\hat T\fb\) factorizes as 
\(f=\bigl(T\fa\rto\yi \hat T\fa\rto{\tilde f} \hat T\fb\bigr)\) for 
a unique \(\tilde f\in\cncCat(\hat T\fa,\hat T\fb)\).
The components of $f$ and $\tilde f$,
\begin{align*}
f_k &=\bigl(T^k\fa \rTTo^{\inj_k} T\fa \rTTo^f \hat T\fb 
\rTTo^{\wh{\pr_1}} \hat\fb\bigr),
\\
\tilde f_k &=\bigl(\wh{T^k\fa} \rTTo^{\wh{\inj_k}} \hat T\fa
\rTTo^{\tilde f} \hat T\fb \rTTo^{\wh{\pr_1}} \hat\fb\bigr),
\end{align*}
are related by \(f_k=\yi\cdot\tilde f_k\) as well.

\begin{remark}\label{rem-sumprin-converges-to-Id}
It follows from \remref{rem-series-converges} that the series
\[ \sum_{k=0}^\infty \wh{\pr_k}\cdot\wh{\inj_k} 
=\sum_{k=0}^\infty \bigl(\hat T\fb \rTTo^{\wh{\pr_k}} 
\wh{T^k\fb} \rTTo^{\wh{\inj_k}} \hat T\fb\bigr)
\]
converges to \(\Id_{\hat T\fb}\).
\end{remark}

We use this remark in order to write down components of the composition 
\(h=\bigl(T\fa\rto f \hat T\fb\rto{\tilde g} \hat T\fc\bigr)\).
We have by \eqref{dia-f:a-whTb}
\begin{align}
h_l &=\sum_{k=0}^\infty \bigl(T^l\fa \rTTo^{\inj_l} T\fa \rTTo^f 
\hat T\fb \rTTo^{\wh{\pr_k}} \wh{T^k\fb} \rTTo^{\wh{\inj_k}} \hat T\fb 
\rTTo^{\tilde g} \hat T\fc \rTTo^{\wh{\pr_1}} \hat\fc\bigr) \notag
\\
&=\sum_{k=0}^\infty \bigl(T^l\fa \rTTo^{\inj_l} T\fa\rTTo^{\Delta^{(k)}}
(T\fa)^{\tens k} \rTTo^{\check{f}^{\tens k}} \hat\fb^{\tens k} \rTTo^\yi
\hat\fb^{\wh\tens k} \rTTo^{\wh{\yi^{\tens k}}^{-1}} \fb^{\wh\tens k} 
\rTTo^{\tilde g_k} \hat\fc\bigr) \notag
\\
&=\sum_{i_1+\dots+i_k=l}^{k\ge0} \bigl(T^l\fa\rTTo^{f_{i_1}\tdt f_{i_k}}
\hat\fb^{\tens k} \rTTo^\yi \hat\fb^{\wh\tens k} 
\rTTo^{\wh{\yi^{\tens k}}^{-1}} \fb^{\wh\tens k} \rTTo^{\tilde g_k} 
\hat\fc\bigr).
\label{eq-hl-fi1fik-gk}
\end{align}

\begin{proposition}\label{prop-cofunctor-qA8}
Let $\fa$, $\fb$ be filtered quivers and let 
\(f:\hat T\fa\to\hat T\fb\in\Lambda\modulc_\LL\Quiv\) be compatible 
with the comultiplication and the counit.
Then $f$ is a cofunctor iff 
\(f_0=\wh{\inj_0}\cdot f\cdot\wh{\pr_1}:\1\fa\to\hat\fb\) 
is tensor convergent.
\end{proposition}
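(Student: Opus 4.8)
The plan is to deduce the proposition almost directly from \thmref{thm-Cofunctors-1st-projection}, applied to the completed conilpotent cocategory $\fA=\hat T\fa=\wh{T\fa}$, which is the completion of the cofree conilpotent cocategory $T\fa$ of \exaref{exa-T-conilpotent-cocategory}. The key bookkeeping observation is that the augmentation of $\hat T\fa$ is $\eta_{\hat T\fa}=\wh{\inj_0}$, so that the map from the statement satisfies $f_0=\wh{\inj_0}\cdot f\cdot\wh{\pr_1}=\eta_{\hat T\fa}\cdot\check f$, where $\check f=f\cdot\wh{\pr_1}$ is the first corestriction of $f$. First I would record, from diagram~\eqref{dia-f:a-whTb} in the proof of \thmref{thm-Cofunctors-1st-projection}, that a morphism of $\Lambda\modulc_\LL\Quiv$ compatible with the comultiplication and the counit is already uniquely determined by $\check f$; this is what will let me transfer the theorem's bijection, a priori a statement about cofunctors, to the larger class of comultiplication-and-counit-compatible morphisms considered here.

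For the ``only if'' direction I would start from the defining property of a cofunctor, namely that $\eta_{\hat T\fa}\cdot f-\1 f\cdot\eta_{\hat T\fb}\colon\1\fa\to\hat T\fb$ is tensor convergent. Composing on the right with $\wh{\pr_1}$ and using that $\inj_0\cdot\pr_1=0$ on $T\fb$ (distinct tensor summands), whence $\1 f\cdot\eta_{\hat T\fb}\cdot\wh{\pr_1}=\1 f\cdot\wh{\inj_0\cdot\pr_1}=0$, collapses the difference to $f_0=\eta_{\hat T\fa}\cdot f\cdot\wh{\pr_1}$. Since tensor convergence is preserved under post-composition by a morphism of complete quivers (\remref{rem-tensor-convergent}), I conclude that $f_0$ is tensor convergent.

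For the ``if'' direction, assuming $f_0=\eta_{\hat T\fa}\cdot\check f$ is tensor convergent, I would feed $\check f$ into the bijection of \thmref{thm-Cofunctors-1st-projection}: it lies in the indexing set $\{\phi:\eta\cdot\phi\text{ tensor convergent}\}$, so the theorem produces a genuine cofunctor $\Psi(\check f)\colon\hat T\fa\to\hat T\fb$ whose first corestriction $\Psi(\check f)\cdot\wh{\pr_1}$ is again $\check f$. Both $f$ and $\Psi(\check f)$ are compatible with the comultiplication and the counit and share the corestriction $\check f$, so the uniqueness recorded in the first paragraph forces $f=\Psi(\check f)$, and therefore $f$ is a cofunctor.

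I do not expect a genuinely hard step: the two directions are short once the identifications $\eta_{\hat T\fa}=\wh{\inj_0}$ and $f_0=\eta_{\hat T\fa}\cdot\check f$ are in place. The only point requiring care—the mild obstacle—is making sure the hypothesis that $f$ is \emph{already} compatible with the comultiplication and the counit is precisely what licenses the uniqueness of $f$ from $\check f$; this is what allows the theorem, whose bijection is phrased for cofunctors, to decide membership in the cofunctor class for the wider family of morphisms treated in this proposition.
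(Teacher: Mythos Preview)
Your proposal is correct, and both directions work as you describe; but the argument differs from the paper's in a way worth noting.

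For the ``only if'' direction you compose $\eta_{\hat T\fa}\cdot f-\1 f\cdot\eta_{\hat T\fb}$ with $\wh{\pr_1}$ and invoke \remref{rem-tensor-convergent}; this is the clean route already used inside the proof of \thmref{thm-Cofunctors-1st-projection}. The paper's own proof of this proposition instead establishes the explicit identity $y\overset{\text{def}}=\eta_{\hat T\fa}\cdot f-\1 f\cdot\eta_{\hat T\fb}=\sum_{n>0}f_0^{\hat\tens n}$ and then \emph{inverts} it combinatorially, showing $f_0=\sum_{m>0}(-1)^{m-1}y^{\hat\tens m}$ via a binomial computation, whence $f_0$ inherits tensor convergence from $y$. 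Your route is shorter; the paper's route yields, as a byproduct, the explicit mutually inverse correspondences $f_0\leftrightarrow y$ (which the paper remarks on at the end of its proof).

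For the ``if'' direction you invoke the bijection $\Psi$ of \thmref{thm-Cofunctors-1st-projection} and then use the uniqueness of a comultiplication-and-counit-compatible morphism with given $\check f$ (via diagram~\eqref{dia-f:a-whTb} together with the embedding $\wh{T\fb}\hookrightarrow\prod_k\wh{\fb^{\tens k}}$ from \remref{rem-series-converges}) to identify $f$ with the cofunctor $\Psi(\check f)$. The paper instead computes directly that $\eta_{\hat T\fa}\cdot f-\1 f\cdot\eta_{\hat T\fb}=\sum_{n>0}f_0^{\hat\tens n}$ and checks this series is tensor convergent. Your approach is more conceptual and avoids any series manipulation; the paper's is self-contained and again makes the relation $y=\sum_{n>0}f_0^{\hat\tens n}$ explicit. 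The one point you flag as needing care---that diagram~\eqref{dia-f:a-whTb} only uses compatibility with the comultiplication, not the full cofunctor property---is indeed the crux, and it is correct: the left triangle there is precisely $f\cdot\Delta^{(k)}=\Delta^{(k)}\cdot f^{\hat\tens k}$.
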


\begin{proof}
Assuming that \(f_0:\1\fa\to\hat\fb\) is tensor convergent, we find due 
to diagram~\eqref{dia-f:a-whTb} that 
\(\eta_{\hat{T}\fa}\cdot f-\1 f\cdot\eta_{\hat{T}\fb}
=\sum_{n>0}f_0^{\hat\tens n}:\1\fa\to\hat{T}\fb\)
and the series in the right hand side is convergent.
Furthermore, the series in the right hand side is tensor convergent.
Therefore, if $f_0$ is tensor convergent, then $f$ is cofunctor.

Assuming that $f$ is a cofunctor, we see that by definition 
\(y=\eta_{\hat{T}\fa}\cdot f-\1 f\cdot\eta_{\hat{T}\fb}:
\1\fa\to\wh{T^{\ge1}\fb}\) 
is tensor convergent.
From \eqref{eq-phix1-phixk} we obtain that 
\(y=\sum_{n>0}f_0^{\hat\tens n}\).
The series 
\(f_0'=\sum_{m>0}(-1)^{m-1}y^{\hat\tens m}:\1\fa\to\wh{T^{\ge1}\fb}\) 
converges.
Let us compute the sum:
\[ f_0' =\sum_{m>0}(-1)^{m-1}
\Bigl(\sum_{n>0}f_0^{\hat\tens n}\Bigr)^{\hat\tens m} 
=\sum_{k>0}f_0^{\hat\tens k} \sum_{i_1+\dots+i_m=k}^{m,i_j>0}(-1)^{m-1}.
\]
Notice that the coefficient near $t^k$ in expansion
\((\frac t{1-t})^m=t^m\sum_{a=0}^\infty(-1)^at^a\binom{-m}a\) equals 
\(\binom{k-1}{k-m}\) for $k\ge m$ and vanishes if $k<m$.
Therefore,
\[ f_0' =\sum_{k>0}f_0^{\hat\tens k} 
\sum_{m=1}^k(-1)^{m-1}\binom{k-1}{k-m} 
=f_0+\sum_{k>1}f_0^{\hat\tens k} (1-1)^{k-1} =f_0.
\]
We conclude that \(f_0=f_0'\) is tensor convergent.
By the way, one can show that the both compositions of the maps 
\(f_0\mapsto y\), \(y\mapsto f_0\) are identities.
\end{proof}

This proposition shows that in the set-up of De~Deken and Lowen 
$qA_\infty$-functors \cite{MR3856926} are the same as cofunctors.

\begin{corollary}
Let $\fa$, $\fb$ be filtered quivers and let
\(f:T\fa\to\hat T\fb\in\Lambda\modul_\LL\Quiv\) be compatible with the 
comultiplication and the counit in the sense of 
diagrams~\eqref{eq-dia-aB-Delta-epsilon}.
Then $f$ is a cofunctor iff 
\(f_0=\inj_0\cdot f\cdot\wh{\pr_1}:\1\fa\to\hat\fb\) is tensor 
convergent.
\end{corollary}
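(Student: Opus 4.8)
The plan is to transport the statement across the completion and then invoke \propref{prop-cofunctor-qA8}, which is its counterpart for a source that is already complete. Since $\fa$ is a filtered quiver, the tensor quiver $T\fa$ is a conilpotent cocategory, so $T\fa\in\ncCat$, while $\hat T\fb\in\cncCat$. Hence \propref{pro-restriction-universality-ff'} applies with its $\fa$ and $\fB$ taken to be $T\fa$ and $\hat T\fb$ respectively.

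First I would factor the given morphism. As $f:T\fa\to\hat T\fb$ satisfies diagrams~\eqref{eq-dia-aB-Delta-epsilon}, the universality of the target (a complete quiver) yields a unique $\tilde f:\hat T\fa\to\hat T\fb\in\Lambda\modulc_\LL\Quiv$ with $f=\bigl(T\fa\rto\yi\hat T\fa\rto{\tilde f}\hat T\fb\bigr)$, and the universality argument in the proof of \propref{pro-restriction-universality-ff'} shows that $\tilde f$ is again compatible with the comultiplication and the counit. By the bijection of that proposition together with the uniqueness of this factorization, $f$ is a cofunctor in the sense of \defref{def-cofunctor-aB} if and only if $\tilde f$ lies in $\cncCat(\hat T\fa,\hat T\fb)$, i.e. $\tilde f$ is a cofunctor.

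Next I would apply \propref{prop-cofunctor-qA8} to $\tilde f$: it is a cofunctor precisely when $\tilde f_0=\wh{\inj_0}\cdot\tilde f\cdot\wh{\pr_1}:\1\fa\to\hat\fb$ is tensor convergent. It then remains to match $f_0$ with $\tilde f_0$. Naturality of $\yi$ along $\inj_0:T^0\fa\to T\fa$ gives $\inj_0\cdot\yi_{T\fa}=\yi_{T^0\fa}\cdot\wh{\inj_0}$, whence $f_0=\inj_0\cdot f\cdot\wh{\pr_1}=\yi_{T^0\fa}\cdot\wh{\inj_0}\cdot\tilde f\cdot\wh{\pr_1}=\yi_{\1\fa}\cdot\tilde f_0$, using $T^0\fa=\1\fa$. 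The decisive observation is that the unit quiver $\1\fa=\1\Ob\fa$ is complete, because $\Lambda$ is complete; consequently $\yi_{\1\fa}$ is an isomorphism fixing each $1_X$, so $f_0(1_X)=\tilde f_0(1_X)$ and $\Ob f_0=\Ob\tilde f_0$. Thus $f_0$ and $\tilde f_0$ literally agree, and in particular one is tensor convergent if and only if the other is.

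Chaining these gives the equivalences $f$ is a cofunctor $\Leftrightarrow$ $\tilde f$ is a cofunctor $\Leftrightarrow$ $\tilde f_0$ is tensor convergent $\Leftrightarrow$ $f_0$ is tensor convergent, which is the assertion. The only delicate point is the last identification $f_0=\tilde f_0$, namely that completion leaves the zeroth component untouched; it rests entirely on completeness of $\1\fa$, after which the proof is a formal assembly of \propref{pro-restriction-universality-ff'} and \propref{prop-cofunctor-qA8}. One could instead argue directly, computing $\eta_{T\fa}\cdot f-\1f\cdot\eta_{\hat T\fb}=\sum_{n>0}f_0^{\hat\tens n}$ from \eqref{eq-phix1-phixk} and reasoning as in \propref{prop-cofunctor-qA8}, but that merely repeats the work already done there.
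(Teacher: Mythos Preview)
Your proof is correct and follows the intended route: the paper states the corollary without proof precisely because it is immediate from \propref{prop-cofunctor-qA8} via the factorization $f=\yi\cdot\tilde f$ of \propref{pro-restriction-universality-ff'} together with the relation $f_k=\yi\cdot\tilde f_k$ recorded just after that proposition. Your explicit justification that $\yi_{\1\fa}$ is an isomorphism (completeness of $\Lambda$) is the only point the paper leaves tacit.
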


\subsection{Coderivations}\label{sec-Coderivations}
\begin{definition}
	Let \(f,g:\fA\to\fB\in\cncCat\).
An \((f,g)\)\n-coderivation \(r:f\to g:\fA\to\fB\) of degree $d$ and of 
level $\lambda$ is a collection of elements 
\(r\in\cf^\lambda\und{\Lambda\modul_\LL}(\fA(X,Y),\fB(fX,gY))^d\), which
satisfies the equation 
\(r\cdot\Delta=\Delta\cdot(f\hat\tens r+r\hat\tens g)\).
\end{definition}

Let \(\fA,\fB\in\cncCat\).
The coderivation quiver \(\Coder(\fA,\fB)\) has cofunctors 
\(f:\fA\to\fB\) as objects and the component 
\(\cf^\lambda\Coder(\fA,\fB)(f,g)^d\) of the filtered graded 
$\Lambda$\n-module \(\Coder(\fA,\fB)(f,g)\) consists of coderivations 
\(r:f\to g:\fA\to\fB\) of degree $d$ and of level $\lambda$.

\begin{proposition}\label{pro-coderivations-bijection}
Let $\fb$ be a filtered quiver and let $f,g:\fA\to\wh{T\fb}\in\cncCat$.
\((f,g)\)\n-coderivations \(r:f\to g:\fA\to\wh{T\fb}\) of degree $d$ and
of level $\lambda$ are in bijection with the collections of morphisms 
\(\check r=r\cdot\wh{\pr_1}\in
\cf^\lambda\und{\Lambda\modul_\LL}(\fA(X,Y),\hat\fb(fX,gY))^d\), 
$X,Y\in\Ob\fA$.
\end{proposition}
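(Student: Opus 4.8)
The plan is to mimic the proof of \propref{pro-coderivations-components-T}, carrying the filtration data along and controlling convergence exactly as in the proof of \thmref{thm-Cofunctors-1st-projection}. The map $r\mapsto\check r=r\cdot\wh{\pr_1}$ is manifestly well-defined: since $\wh{\pr_1}:\wh{T\fb}\to\hat\fb$ is a filtration-preserving morphism of $\Lambda\modulc_\LL$, composing with it a coderivation of degree $d$ and level $\lambda$ yields a collection $\check r$ lying in $\cf^\lambda\und{\Lambda\modul_\LL}(\fA(X,Y),\hat\fb(fX,gY))^d$. So the real work is to construct an inverse, that is, to reconstruct $r$ from $\check r$ and to verify that the reconstruction is a coderivation of the prescribed degree and level.

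Given $\check r$, I would define $r$ through its components $r\cdot\wh{\pr_n}:\fA(X,Y)\to\wh{\fb^{\tens n}}$ by the completed analogue of \eqref{eq-r=sum},
\[
r\cdot\wh{\pr_n}=\Delta_\fA^{(n)}\cdot\sum_{q+1+t=n}\check f^{\hat\tens q}\hat\tens\check r\hat\tens\check g^{\hat\tens t},\qquad n\ge1,
\]
with $r\cdot\wh{\pr_0}=0$ (the sum over $q+1+t=0$ being empty), the right-hand side suitably interpreted through the identifications $\wh{\yi^{\tens n}}^{-1}$ as in the component formula \eqref{eq-hl-fi1fik-gk}; here $\check f$, $\check g$ are the first components of the cofunctors $f$, $g$ furnished by \thmref{thm-Cofunctors-1st-projection}. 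Degree and level bookkeeping is then immediate: each summand carries the single factor $\check r$ of level $\lambda$ and degree $d$, while $\check f$, $\check g$ have level $0$ and degree $0$, so every $r\cdot\wh{\pr_n}$ has level $\lambda$ and degree $d$.

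The main obstacle is showing that the collection $(r\cdot\wh{\pr_n})_{n\ge1}$ actually defines a morphism into the completed tensor cocategory $\wh{T\fb}$, that is, that for each $x\in\fA(X,Y)^d$ and each $l\in\LL$ all but finitely many components $(r\cdot\wh{\pr_n})(x)$ land in $\cf^l\wh{\fb^{\tens n}}$ in the sense of \remref{rem-series-converges}. I would run the same argument as in the convergence part of the proof of \thmref{thm-Cofunctors-1st-projection}: first reduce, via \eqref{eq-Delta-bar-Delta}, to $x\in\bar\fa(X,Y)^d$, where conilpotence of $\fa$ gives $\bar\Delta_\fa^{(m)}x=0$ for $m$ past some $n_0$, so that in each term of $\Delta_\fA^{(n)}x$ with $n$ large a growing number of the tensor slots are filled by augmentation elements $\eta(1_Z)$; since $f$ and $g$ are cofunctors, $\check f$ and $\check g$ are tensor convergent on these elements, forcing the terms deep into the filtration, while the single $\check r$ slot only shifts the level by the fixed amount $\lambda$. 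The general $x\in\bar\fA(X,Y)^d$ is then handled by approximating it modulo $\cf^l$ by an element of $\bar\fa(X,Y)^d$, exactly as in the passage from $\bar\fa$ to $\bar\fA$ there.

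Finally I would check that the reconstructed $r$ is genuinely an $(f,g)$-coderivation, i.e.\ $r\cdot\Delta=\Delta\cdot(f\hat\tens r+r\hat\tens g)$. This is a component-wise verification using coassociativity of $\Delta_\fA$ and the cut comultiplication on $\wh{T\fb}$, formally identical to the computation establishing \eqref{eq-r=sum} in \propref{pro-coderivations-components-T} and to the cofunctor computation in \thmref{thm-Cofunctors-1st-projection}; convergence of the series involved and the admissibility of rearranging summation orders are guaranteed by the previous paragraph together with the $\hat\tens$-variant of \lemref{lem-abcd-fg-convergent-series}. Since for $n=1$ the only term is $\Delta_\fA^{(1)}\cdot\check r=\check r$, we get $r\cdot\wh{\pr_1}=\check r$, so the two constructions are mutually inverse, giving the claimed bijection.
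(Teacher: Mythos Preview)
Your proposal is correct, and your component formula for $r\cdot\wh{\pr_n}$ matches the paper's.  The route diverges at the convergence step.  You plan to rerun the whole convergence argument from the proof of \thmref{thm-Cofunctors-1st-projection}: reduce to the conilpotent core $\bar\fa$, expand $\Delta^{(n)}$ via \eqref{eq-Delta-bar-Delta}, exploit tensor convergence of $\eta\cdot\check f$ and $\eta\cdot\check g$, then approximate a general $x\in\bar\fA$.  This works, but the paper avoids it entirely.  The observation is that by coassociativity the family $(r\cdot\wh{\pr_n})_n$ assembles into the closed expression
\[
r=\Delta_\fA^{(3)}\cdot(f\hat\tens\check r\hat\tens g)\cdot\mu_{\wh{T\fb}}^{(3)},
\]
where the \emph{full} cofunctors $f,g:\fA\to\wh{T\fb}$ appear rather than their first components $\check f,\check g$.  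Since $f$ and $g$ already take values in $\wh{T\fb}$ and the completed multiplication $\mu_{\wh{T\fb}}^{(3)}$ is a morphism \emph{into} $\wh{T\fb}$ (see \eqref{eq-Multiplication-algebra-hat-TX}), this visibly lands in $\wh{T\fb}$ with no separate convergence proof needed.  The coderivation verification is then a four-line Sweedler computation directly on this closed form.  So the paper's trick is to package the infinite sum inside the already-convergent cofunctors $f$ and $g$; your approach unpacks them into $\check f,\check g$ and must then redo the work that went into building $f$ and $g$ in the first place.
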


\begin{proof}
	The commutative diagram
	\begin{diagram}[width=6em,h=2.3em]
		&&\wh{T\fb} &\rTTo^{\wh{\pr_k}} &\wh{\fb^{\tens k}}
		\\
		&\ruTTo^r &\dTTo<{\Delta^{(k)}} &= &\dTTo>{\wh{\yi^{\tens k}}}
		\\
\fA &= &\wh{T\fb}^{\hat\tens k} &\rTTo^{\wh{\pr_1}^{\hat\tens k}} 
&\wh{\fb}^{\hat\tens k}
\\
& \rdTTo<{\Delta^{(k)}} 
&\uTTo~{\sum_{q+1+t=k}f^{\hat\tens q}\hat\tens r\hat\tens g^{\hat\tens t}}
&\ruTTo^=_{\sum_{q+1+t=k}\check{f}^{\hat\tens q}\hat\tens\check{r}\hat\tens\check{g}^{\hat\tens t}}
&
		\\
		&&\fA^{\hat\tens k} &&
	\end{diagram}
shows that the composition 
\(\fA\rto r\wh{T\fb}\hookrightarrow\prod_{k\ge0}\wh{\fb^{\tens k}}\) is 
given by the family
\[ \bigl(\Delta_\fA^{(k)} \cdot 
\sum_{q+1+t=k}\check{f}^{\hat\tens q}\hat\tens\check{r}
\hat\tens\check{g}^{\hat\tens t}\bigr)_{k=0}^\infty.
\]
Due to coassociativity of \(\Delta_\fA\) this equals 
\(\Delta_\fA^{(3)}\cdot(f\hat\tens\check{r}
\hat\tens g)\cdot\mu_{\wh{T\fb}}^{(3)}\), 
which clearly lies in $\wh{T\fb}$.
Thus,
\begin{equation}
r =\Delta_\fA^{(3)}\cdot(f\hat\tens\check{r}\hat\tens g)
\cdot\mu_{\wh{T\fb}}^{(3)}
\label{eq-r-Delta3-frg}
\end{equation}
is unambiguously determined by the collection 
\(\check{r}=r\cdot\wh{\pr_1}:\fA(X,Y)\to\hat{\fb}(fX,gY)\).

On the other hand, the right hand side of \eqref{eq-r-Delta3-frg} is an 
\((f,g)\)\n-coderivation as the following computation shows
\begin{align*}
x(r\cdot\Delta) &=(x_{(1)}\hat\tens x_{(2)}\hat\tens x_{(3)})
[(f\cdot\Delta)\hat\tens\check{r}\hat\tens g] 
+(x_{(1)}\hat\tens x_{(2)}\hat\tens x_{(3)})
[f\hat\tens\check{r}\hat\tens(g\cdot\Delta)]
\\
&=\bigl(x_{(1)}\wh{\bigotimes}x_{(2)}\hat\tens x_{(3)}\hat\tens x_{(4)}\bigr)
\bigl(f\wh{\bigotimes}f\hat\tens\check{r}\hat\tens g\bigr) 
+\bigl(x_{(1)}\hat\tens x_{(2)}\hat\tens x_{(3)}\wh{\bigotimes}x_{(4)}\bigr)
\bigl(f\hat\tens\check{r}\hat\tens g\wh{\bigotimes}g\bigr)
\\
&=\bigl(x_{(1)}\wh{\bigotimes}x_{(2)}\bigr) \bigl(f\wh{\bigotimes}r\bigr)
+\bigl(x_{(1)}\wh{\bigotimes}x_{(2)}\bigr) \bigl(r\wh{\bigotimes}g\bigr)
=(x\Delta)(f\hat\tens r+r\hat\tens g).
\end{align*}
It remains to note that $f$ and $g$ preserve the filtration and the 
grading.
\end{proof}

\begin{corollary}\label{cor-a-hatb-coderivations}
Let \(\fa\in\ncCat\), let $\fb$ be a filtered quiver and let 
\(f,g:\hat\fa\to\wh{T\fb}\in\cncCat\).
\((f,g)\)\n-coderivations \(r:f\to g:\hat\fa\to\wh{T\fb}\) of degree $d$
and of level $\lambda$ are in bijection with the collections of morphisms
\(\check r=r\cdot\wh{\pr_1}\in
\cf^\lambda\und{\Lambda\modul_\LL}(\fa(X,Y),\hat\fb(fX,gY))^d\), 
$X,Y\in\Ob\fa$.
\end{corollary}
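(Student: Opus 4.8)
The plan is to deduce this exactly as \corref{cor-Cofunctors-aTb} was deduced from \thmref{thm-Cofunctors-1st-projection}, i.e. by feeding \propref{pro-coderivations-bijection} through the universal property of the completion. First I would apply \propref{pro-coderivations-bijection} to the completed conilpotent cocategory $\fA=\hat\fa$, noting $\Ob\hat\fa=\Ob\fa$. It gives a bijection between $(f,g)$\n-coderivations $r:f\to g:\hat\fa\to\wh{T\fb}$ of degree $d$ and level $\lambda$ and the collections
\[ \check r=r\cdot\wh{\pr_1}\in\cf^\lambda\und{\Lambda\modul_\LL}(\hat\fa(X,Y),\hat\fb(fX,gY))^d, \qquad X,Y\in\Ob\fa. \]
Since $\hat\fa(X,Y)=\wh{\fa(X,Y)}$, it only remains, for each pair $X,Y$, to identify the module of components over $\wh{\fa(X,Y)}$ with the corresponding module over the uncompleted quiver $\fa(X,Y)$.

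For this I would use that the target $\hat\fb(fX,gY)=\wh{\fb(fX,gY)}$ is complete and that $\Lambda\modulc_\LL$ is a reflective $\Lambda\modul_\LL$\n-subcategory of $\Lambda\modul_\LL$ (established in \secref{sec-Complete-cocategories} via \apref{ap-A-Reflective}, where the completion is an idempotent monad). The \emph{enriched} reflection then supplies, for the complete object $\hat\fb(fX,gY)$, an isomorphism of filtered graded $\Lambda$\n-modules
\[ \yi^*:\und{\Lambda\modul_\LL}(\wh{\fa(X,Y)},\hat\fb(fX,gY)) \rto{\cong} \und{\Lambda\modul_\LL}(\fa(X,Y),\hat\fb(fX,gY)), \]
given by precomposition with $\yi:\fa(X,Y)\to\wh{\fa(X,Y)}$. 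Being an isomorphism in $\Lambda\modul_\LL$, both $\yi^*$ and its inverse preserve grading and filtration levels, so $\yi^*$ restricts to a bijection on each $\cf^\lambda(\text{-})^d$. Composing this with the bijection of \propref{pro-coderivations-bijection} yields the asserted bijection; explicitly $r$ corresponds to $\check r=\yi\cdot r\cdot\wh{\pr_1}$, the restriction of $r\cdot\wh{\pr_1}$ along $\yi$, which is what the displayed formula abbreviates. Naturality of all the ingredients makes the composite natural in the data.

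The one step deserving care is that $\yi^*$ respects the level $\lambda$, and here is where \propref{pro-uniformly-continuous}(a) does the concrete work behind the abstract reflection. A component $\check r$ of level $\lambda$ over $\fa(X,Y)$ is a family of uniformly continuous maps into the complete space $\hat\fb(fX,gY)$; since $\yi(\fa(X,Y))$ is dense in $\wh{\fa(X,Y)}$ it extends uniquely to a uniformly continuous map, still $\Lambda$\n-linear by continuity of the operations. That this extension again has level $\lambda$ follows because $\cf^\mu\wh{\fa(X,Y)}$ is the closure of the image of $\cf^\mu\fa(X,Y)$ while $\cf^{\mu+\lambda}\hat\fb(fX,gY)$ is closed in the complete target, so the extension sends the former into the latter for every $\mu$. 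This filtration bookkeeping is the only genuinely analytic point; everything else is the formal enriched-reflection argument already available from the earlier sections.
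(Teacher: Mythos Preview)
Your proposal is correct and follows essentially the same route as the paper: the paper's proof is the single sentence ``Follows from Propositions~\ref{pro-uniformly-continuous}(i) and \ref{pro-coderivations-bijection} by universality property of the completion,'' and you have simply unpacked that sentence, first applying \propref{pro-coderivations-bijection} with $\fA=\hat\fa$ and then using uniform continuity plus density to identify $\cf^\lambda\und{\Lambda\modul_\LL}(\wh{\fa(X,Y)},\hat\fb(fX,gY))^d$ with $\cf^\lambda\und{\Lambda\modul_\LL}(\fa(X,Y),\hat\fb(fX,gY))^d$. Your third paragraph, checking that the unique continuous extension preserves level~$\lambda$ via closedness of $\cf^{\mu+\lambda}\hat\fb(fX,gY)$, is exactly the content the paper leaves implicit in the phrase ``universality property of the completion.''
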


\begin{proof}
Follows from Propositions \ref{pro-uniformly-continuous}(i) and 
\ref{pro-coderivations-bijection} by universality property of 
the completion.
\end{proof}

\begin{definition}\label{def-coderivation-aB}
Let \(\fa\in\ncCat\), \(\fB\in\cncCat\), and let \(f,g:\fa\to\fB\) be 
cofunctors in the sense of \defref{def-cofunctor-aB}.
An \((f,g)\)\n-coderivation \(r:f\to g:\fa\to\fB\) of degree $d$ and of 
level $\lambda$ is a collection of elements 
\(r\in\cf^\lambda\und{\Lambda\modul_\LL}(\fa(X,Y),\fB(fX,gY))^d\), 
which satisfies the equation
\begin{diagram}[LaTeXeqno]
\fa &&\rTTo^r &&\fB
\\
\dTTo<\Delta &&= &&\dTTo>\Delta
\\
\fa\tens\fa &\rTTo^{f\tens r+r\tens g} &\fB\tens\fB &\rTTo^\yi 
&\wh{\fB\tens\fB}
\label{dia-arB}
\end{diagram}
The filtered $\Lambda$\n-module of \((f,g)\)\n-coderivations is denoted 
\(\Coder(\fa,\fB)(f,g)\).
\end{definition}

The reason for introducing this definition is given by the following

\begin{proposition}\label{pro-coderivations-aB}
Let \(\fa\in\ncCat\), \(\fB\in\cncCat\), and let \(f,g:\fa\to\fB\) be 
cofunctors.
They can be represented as \(f=\yi\cdot f'\), \(g=\yi\cdot g'\) by 
\propref{pro-restriction-universality-ff'}.
Then the map
\begin{equation}
\Coder(\hat{\fa},\fB)(f',g') \to \Coder(\fa,\fB)(f,g), 
\qquad r' \mapsto \yi\cdot r'=r,
\label{eq-r'-ir'-r}
\end{equation}
is a bijection.
\end{proposition}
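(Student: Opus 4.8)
The plan is to imitate the proof of \propref{pro-restriction-universality-ff'}, replacing the universal property for cofunctors by the universal property of the completion applied to coderivations. First I would check that the assignment \(r'\mapsto\yi\cdot r'\) indeed lands in \(\Coder(\fa,\fB)(f,g)\): since \(\yi\) preserves the grading and lies in \(\cf^0\), the composite \(r=\yi\cdot r'\) is again a collection in \(\cf^\lambda\und{\Lambda\modul_\LL}(\fa(X,Y),\fB(fX,gY))^d\) (note \(fX=f'X\), \(gY=g'Y\) since \(\yi\) is the identity on objects), and it remains to verify \eqref{dia-arB}. For the inverse I would invoke \propref{pro-uniformly-continuous}(a): each component of \(r\) is a uniformly continuous, filtration-preserving map, so by the universal property of \(\yi\colon\fa\to\hat\fa\) into the complete object \(\fB\) it extends uniquely to \(r'\in\cf^\lambda\und{\Lambda\modul_\LL}(\hat\fa(X,Y),\fB(f'X,g'Y))^d\) with \(\yi\cdot r'=r\); the level \(\lambda\) and degree \(d\) persist because the extension carries \(\cf^\mu\hat\fa=\wh{\cf^\mu\fa}\) into \(\cf^{\mu+\lambda}\fB\) by continuity.

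Both the well-definedness of the forward map and the coderivation property of \(r'\) rest on a single compatibility identity, which I would record first:
\[
\yi\cdot\Delta_{\hat\fa}\cdot(f'\hat\tens r'+r'\hat\tens g')
=\Delta_\fa\cdot(f\tens r+r\tens g)\cdot\yi
\colon \fa\to\wh{\fB\tens\fB}.
\]
To prove it I would combine the comultiplication relation \(\yi\cdot\Delta_{\hat\fa}=\Delta_\fa\cdot\yi\cdot\wh{\yi\tens\yi}\) (read off the left part of \eqref{eq-dia-aahatB-Delta-epsilon}, using \(\Delta_{\hat\fa}=\wh{\Delta_\fa}\cdot\wh{\yi\tens\yi}\) and naturality of \(\yi\) on \(\Delta_\fa\)) with the description \(f'\hat\tens r'=\wh{f'\tens r'}\) of \(\hat\tens\) on morphisms. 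Then \(\wh{\yi\tens\yi}\cdot(f'\hat\tens r')=\wh{(\yi\cdot f')\tens(\yi\cdot r')}=\wh{f\tens r}\), and naturality of \(\yi\) gives \(\yi\cdot\wh{f\tens r}=(f\tens r)\cdot\yi\); the same for the \(r'\hat\tens g'\) summand, and summing yields the displayed identity.

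Granting this identity, the forward direction is immediate: precomposing the coderivation equation \(r'\cdot\Delta_\fB=\Delta_{\hat\fa}\cdot(f'\hat\tens r'+r'\hat\tens g')\) with \(\yi\), and using \(\yi\cdot r'=r\) together with the identity, gives exactly \eqref{dia-arB} for \(r\). The inverse direction is where the real obstacle lies, namely showing that the extension \(r'\) is itself a coderivation. Here I would argue that both \(r'\cdot\Delta_\fB\) and \(\Delta_{\hat\fa}\cdot(f'\hat\tens r'+r'\hat\tens g')\) are (families of) uniformly continuous maps \(\hat\fa\to\wh{\fB\tens\fB}\), and that by the identity above together with the assumption that \(r\) satisfies \eqref{dia-arB}, their precompositions with \(\yi\) both equal \(r\cdot\Delta_\fB=\Delta_\fa\cdot(f\tens r+r\tens g)\cdot\yi\). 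Since \(\yi(\fa)\) is dense in \(\hat\fa\) and \(\wh{\fB\tens\fB}\) is complete, hence separated, the universal property of the completion forces these two continuous extensions of one and the same map to coincide, so \(r'\) is a coderivation. Finally, mutual inverseness of \(r'\mapsto\yi\cdot r'\) and \(r\mapsto r'\) follows from the uniqueness clause in the universal property, exactly as in \propref{pro-restriction-universality-ff'}, establishing the bijection \eqref{eq-r'-ir'-r}.
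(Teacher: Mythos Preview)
Your proof is correct and follows essentially the same approach as the paper's: the paper encodes your ``compatibility identity'' and the density argument in a single commutative diagram whose left squares commute automatically (this is your identity), whose exterior is the coderivation equation \eqref{dia-arB} for $r$, and whose right trapezium is the coderivation equation for $r'$; the universal property of $\yi$ then forces the right trapezium to commute, exactly as you argue via uniqueness of continuous extensions. The only cosmetic difference is that you spell out the identity and the extension-uniqueness step in words, whereas the paper packages both into one diagram chase.
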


\begin{proof}
Take \(r'\in\Coder(\hat{\fa},\fB)(f',g')\).
Then the rightmost quadrilateral (trapezium) in the following diagram 
commutes:
\begin{diagram}[LaTeXeqno]
\fa &\rTTo^\yi &\hat{\fa} &&\rTTo^{r'} &&\fB
\\
\dTTo<\Delta &= &\dTTo<{\hat\Delta} &\rdTTo<=>{\Delta_{\hat{\fa}}} 
&&&\dTTo>\Delta
\\
\fa\tens\fa &\rTTo^\yi &\wh{\fa\tens\fa} &\rTTo^{\wh{\yi\tens\yi}} 
&\wh{\hat\fa\tens\hat\fa} &\rTTo^{\wh{f'\tens r'}+\wh{r'\tens g'}} 
&\wh{\fB\tens\fB}
\\
&\rdTTo(4,2)[hug]_{f\tens r+r\tens g} &&&= &\ruTTo_\yi &
\\
&&&&\fB\tens\fB &&
\label{dia-air'B}
\end{diagram}
Therefore, the whole diagram commutes and map \eqref{eq-r'-ir'-r} is 
well-defined.

On the other hand, any map 
\(r\in\cf^\lambda\und{\Lambda\modul_\LL}(M,N)^d\) takes \(\cf^lM^k\) 
to \(\cf^{l+\lambda}N^{k+d}\).
We are interested in \(M=\fa(X,Y)\), \(N=\fB(fX,gY)\).
By \propref{pro-uniformly-continuous}(i) \(r:M^k\to N^{k+d}\) are 
uniformly continuous for all $k\in\ZZ$.
Therefore, these maps factorize as 
\(r=\bigl(M^k\rto\yi \wh{M^k} \rto{r'} N^{k+d}\bigl)\).
Clearly, \(r'\in\cf^\lambda\und{\Lambda\modul_\LL}(\hat M,N)^d\) and 
\(r=\yi\cdot r'\).
The exterior of \eqref{dia-air'B} commutes.
Thus, the biggest rectangle in \eqref{dia-air'B} commutes.
Hence, the right rectangle commutes.
Equivalently, the trapezium with vertices $\hat{\fa}$, \dots, 
$\wh{\fB\tens\fB}$ commutes, that is, $r'$ is an 
\((f',g')\)-coderivation.
\end{proof}

\begin{corollary}\label{cor-coderivations-aB}
Let \(\fa\in\ncCat\), \(\fB\in\cncCat\).
Then the filtered quivers \(\Coder(\hat{\fa},\fB)\) and 
\(\Coder(\fa,\fB)\) are isomorphic.
\end{corollary}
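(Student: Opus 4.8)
The plan is to obtain the isomorphism by assembling the two bijections already established: \propref{pro-restriction-universality-ff'} will supply the bijection on objects and \propref{pro-coderivations-aB} the isomorphisms on hom\n-modules. Recall that a morphism of filtered quivers over $\Lambda\modul_\LL$ is nothing but a map on objects together with grading\n- and filtration\n-preserving $\Lambda$\n-module maps between the corresponding hom\n-modules; since a quiver carries no composition, no further coherence is required, and to produce an isomorphism it suffices to exhibit a bijection on objects and a filtered graded $\Lambda$\n-module isomorphism on each hom\n-module.

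First I would define the object part. By \propref{pro-restriction-universality-ff'} the mutually inverse maps
\[
\cncCat(\hat{\fa},\fB) \longleftrightarrow \{\text{cofunctors } \fa\to\fB\},
\qquad g\mapsto\yi\cdot g, \quad f\mapsto\tilde f,
\]
identify the objects of $\Coder(\hat{\fa},\fB)$, namely the cofunctors $\hat{\fa}\to\fB$, with the objects of $\Coder(\fa,\fB)$, namely the cofunctors $\fa\to\fB$ in the sense of \defref{def-cofunctor-aB}. This is the required bijection on objects.

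Next I would define the hom\n-module part via \propref{pro-coderivations-aB}. For objects $f'$, $g'$ of $\Coder(\hat{\fa},\fB)$, writing $f=\yi\cdot f'$ and $g=\yi\cdot g'$ for their images, that proposition gives a bijection $\Coder(\hat{\fa},\fB)(f',g')\to\Coder(\fa,\fB)(f,g)$, $r'\mapsto\yi\cdot r'$. Since the source and target of $r'$ are carried to the source and target of $r=\yi\cdot r'$ under the object bijection, these hom\n-level maps are compatible with the object map, so together they constitute a morphism of filtered quivers. It then remains to check that each such map is an isomorphism in $\Lambda\modul_\LL$: precomposition with the $\Lambda$\n-module map $\yi$ is additive and $\Lambda$\n-linear in $r'$, and by the explicit argument in the proof of \propref{pro-coderivations-aB} (which invokes uniform continuity, \propref{pro-uniformly-continuous}(i), to build the inverse $r\mapsto r'$) it carries $\cf^\lambda\und{\Lambda\modul_\LL}(\hat{\fa}(X,Y),\fB(f'X,g'Y))^d$ bijectively onto $\cf^\lambda\und{\Lambda\modul_\LL}(\fa(X,Y),\fB(fX,gY))^d$ for every $d$ and $\lambda$. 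Hence degree and level are preserved in both directions and each hom\n-module map is a filtered graded $\Lambda$\n-module isomorphism.

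Assembling these ingredients yields the isomorphism of filtered quivers $\Coder(\hat{\fa},\fB)\cong\Coder(\fa,\fB)$. There is essentially no obstacle beyond the bookkeeping already carried out in the two cited propositions; the only point worth verifying explicitly is the last one — that $r'\mapsto\yi\cdot r'$ preserves the grading and the filtration levelwise — which is immediate from the factorization $r=\yi\cdot r'$.
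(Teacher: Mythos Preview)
Your proposal is correct and follows exactly the route the paper intends: the corollary is stated immediately after \propref{pro-coderivations-aB} with no separate proof, since the object bijection from \propref{pro-restriction-universality-ff'} together with the levelwise hom bijection of \propref{pro-coderivations-aB} assemble into the claimed isomorphism of filtered quivers. Your only addition is making explicit that $r'\mapsto\yi\cdot r'$ is $\Lambda$\n-linear and preserves grading and filtration level, which is indeed immediate from the factorization and the argument in \propref{pro-coderivations-aB}.
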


When we write \(r:f\to g:\ca\to\cb\) we mean 
\(r\in\cf^\lambda\Coder(Ts\ca,\hat Ts\cb)(f,g)^d\) for some $d\in\ZZ$ 
and \(\lambda\in\LL\).
Suppose that \(\tilde h:\hat Ts\cb\to \hat Ts\cc\) is a cofunctor.
Then for $r$ as above there is a coderivation 
\(r\tilde h\in
\cf^\lambda\Coder(Ts\ca,\hat Ts\cc)(f\tilde h,g\tilde h)^d\), 
whose components are found as
\begin{multline*}
(r\tilde h)_l =\sum_{i_1+\dots+i_k+t+j_1+\dots+j_m=l}^{k,m\ge0} 
\Bigl(T^ls\ca 
\rTTo^{f_{i_1}\tdt f_{i_k}\tens r_t\tens g_{j_1}\tdt g_{j_m}} 
(s\hat\cb)^{\tens(k+1+m)}
\\
\rTTo^\yi (s\hat\cb)^{\wh\tens(k+1+m)}\rTTo^{\wh{\yi^{\tens k+1+m}}^{-1}}
(s\cb)^{\wh\tens(k+1+m)} \rTTo^{\tilde h_{k+1+m}} \hat\fc\Bigr),
\end{multline*}
due to \propref{pro-coderivations-bijection}, 
\remref{rem-sumprin-converges-to-Id} similarly to 
\eqref{eq-hl-fi1fik-gk}.

Suppose now that besides 
\(\tilde r\in
\cf^\lambda\Coder(\hat Ts\ca,\hat Ts\cb)(\tilde f,\tilde g)^d\) 
we have a cofunctor \(e:Ts\cc\to\hat Ts\ca\).
Then we have also a coderivation 
\(e\tilde r:e\tilde f\to e\tilde g:\cc\to\cb\), 
\(e\tilde r\in
\cf^\lambda\Coder(Ts\cc,\hat Ts\cb)(e\tilde f,e\tilde g)^d\), 
whose components are given by
\[ (e\tilde r)_l =\sum_{i_1+\dots+i_k=l}^{k\ge0} \bigl(T^ls\cc 
\rTTo^{e_{i_1}\tdt e_{i_k}} (s\hat\ca)^{\tens k} \rTTo^\yi 
(s\hat\ca)^{\wh\tens k} \rTTo^{\wh{\yi^{\tens k}}^{-1}} 
(s\ca)^{\wh\tens k} \rTTo^{\tilde r_k} s\hat\cb\bigr)
\]
due to \thmref{thm-Cofunctors-1st-projection}, 
\remref{rem-sumprin-converges-to-Id} similarly to 
\eqref{eq-hl-fi1fik-gk}.

\subsection{Evaluation.}\label{sec-Evaluation-filtered}
Let $\fA$ be a completed conilpotent cocategory and let $\fb$ be a 
$\cv$\n-quiver.
Define the evaluation cofunctor 
\(\ev:\fA\boxt T\Coder(\fA,\wh{T\fb})\to\wh{T\fb}\) on objects as 
\(\ev(A\boxt f)=fA\), and on morphisms as follows.
Let \(f^0,f^1,\dots,f^n:\fA\to\wh{T\fb}\) be cofunctors, and let 
\(r^1,\dots,r^n\) be coderivations of certain degrees and of some level 
as in 
$f^0\rto{r^1} f^1\rto{r^2} \dots f^{n-1}\rto{r^n} f^n:\fA\to\wh{T\fb}$,
$n\ge0$.
Then \(c=r^1\tdt r^n\in T^n\Coder(\fA,\wh{T\fb})(f^0,f^n)\).
Define
\begin{equation*}
[a\boxt(r^1\tdt r^n)]\ev =(a\Delta^{(2n+1)})(f^0\hat\tens\check r^1
\hat\tens f^1\hat\tens\check r^2\htdht f^{n-1}\hat\tens\check r^n
\hat\tens f^n) \mu_{\wh{T\fb}}^{(2n+1)}.
\end{equation*}
The right hand side belongs to 
\((\wh{T\fb})^{\hat\tens(2n+1)}\mu_{\wh{T\fb}}^{(2n+1)}\) and is mapped 
by multiplication \(\mu_{\wh{T\fb}}^{(2n+1)}\) from 
\eqref{eq-Multiplication-algebra-hat-TX} into \(\wh{T\fb}\).
So defined $\ev$ is a cofunctor.
Indeed, \(\eta\cdot\ev-\1\ev\cdot\eta\) applied to \(1_A\boxt1_f\), 
\(A\in\Ob\fA\), \(f\in\Ob\Coder(\fA,\wh{T\fb})=\cncCat(\fA,\wh{T\fb})\),
gives a tensor convergent expression in \(\wh{T\fb}\)
\[ [\eta(1_A)\boxt\inj_0(1_f)]\ev -(1_{fA})\eta 
=f[\eta(1_A)] -\eta(1_{fA}),
\]
since $f$ is a cofunctor.

The following statement generalizes Proposition~3.4 of \cite{Lyu-AinfCat}.

\begin{theorem}\label{thm-Ev-psi}
For \(\fa\in\ncCat\), \(\fb,\fc^1,\dots\fc^q\in\Lambda\modul_\LL\Quiv\) 
with notation \(\fc=T\fc^1\boxt\dots\boxt T\fc^q\) the map
\begin{align*}
\ncCat(\fc,T\Coder(\fa,\wh{T\fb})) 
&\longrightarrow \cncCat(\fa\boxt\fc,\wh{T\fb}),
\\
\psi &\longmapsto \bigl( \fa\boxt\fc \rTTo^{\fa\boxt\psi} 
\fa\boxt T\Coder(\fa,\wh{T\fb}) \rto{\ev} \wh{T\fb} \bigr)
\end{align*}
is a bijection.
\end{theorem}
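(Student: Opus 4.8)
The plan is to mirror the unfiltered Proposition~\ref{prop-psi-ev}, supplementing its combinatorics with the analysis of filtration levels and tensor convergence forced by the completed target. Since $\fc=T\fc^1\boxt\dots\boxt T\fc^q$ is conilpotent by Proposition~\ref{pro-nilpotent-full-monoidal-subcategory-augmented} and $T\Coder(\fa,\wh{T\fb})$ is a tensor cocategory, Proposition~\ref{pro-tensor-ncCat} parametrizes an augmentation preserving cofunctor $\psi$ by a single filtration preserving quiver map $\check\psi=\psi\cdot\pr_1\colon\fc\to\Coder(\fa,\wh{T\fb})$ satisfying $\eta\cdot\check\psi=0$; dually, a cofunctor $\phi\colon\fa\boxt\fc\to\wh{T\fb}$ is parametrized by $\check\phi=\phi\cdot\wh{\pr_1}$ through Corollary~\ref{cor-Cofunctors-aTb} applied to the conilpotent cocategory $\fa\boxt\fc$. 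Expanding the formula defining $\ev$ and the component diagram for cofunctors, the requirement $\ev\circ(\fa\boxt\psi)=\phi$ becomes the single equation $\sum_{k\ge0}(a\boxt c\Delta^{(k)}\check\psi^{\tens k})\ev=(a\boxt c)\phi$ for $a\in\fa^\bull$, $c\in\fc^\bull$. Crucially $\fc$ is a genuine conilpotent cocategory, so $\bar\Delta^{(k)}(c)=0$ for $k$ large and every sum below is finite; no convergence issue arises in the recursion itself.

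First I would split into the two cases of the unfiltered argument. For $c=\eta(1_g)$ the equation reduces to $(a)(g\psi)=(a\boxt\eta(1_g))\phi$, which defines $g\psi=(\_\boxt g)\phi$ on objects and morphisms; here I must check $g\psi\in\cncCat(\fa,\wh{T\fb})$, and this holds because the relevant difference $\eta_\fa\cdot(g\psi)-\1(g\psi)\cdot\eta$ is the restriction of the tensor convergent map $\eta_{\fa\boxt\fc}\cdot\phi-\1\phi\cdot\eta$ to the objects $(X,g)$, so tensor convergence follows via Remark~\ref{rem-tensor-convergent}. For $c\in\bar\fc^d$ I solve
\[ (a)(c)\check\psi=(a\boxt c)\phi-\sum_{k\ge2}(a\boxt c\bar\Delta^{(k)}\check\psi^{\tens k})\ev, \]
where $\Delta$ may be replaced by $\bar\Delta$ because $\eta\cdot\check\psi=0$. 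The product-poset structure of $\fc=T\fc^1\boxt\dots\boxt T\fc^q$ expresses the component $\check\psi_{i_1,\dots,i_q}$ in terms of components with strictly smaller multi-indices in $\NN^q$, so the recursion determines $\check\psi$ uniquely, yielding at once both surjectivity and injectivity of the map in question once the output is shown to be legitimate.

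The genuinely new verifications are two. First, that $\check\psi$ is filtration preserving and lands in coderivations of the correct level: tracking levels through the finite right-hand side above, and using that $\phi$, the inductively available cofunctors $g\psi$, and $\ev$ all preserve the filtration, I would show by induction on the multi-index that $c\in\cf^\mu\fc(X,Y)^d$ forces $c\check\psi\in\cf^\mu\Coder(\fa,\wh{T\fb})(f,g)^d$. Second, that each $c\check\psi$ is actually a coderivation for the pair $((\_\boxt X)\phi,(\_\boxt Y)\phi)$: invoking Proposition~\ref{pro-coderivations-bijection} I repeat the computation of Proposition~\ref{prop-psi-ev}, expanding $(a)(c\check\psi)\Delta$, applying the middle four interchange $\tau_{(23)}$ and the identity $(a)(cF)=(a\boxt c)\phi$ supplied by the recursion, now with $\tens$ replaced by $\hat\tens$ and the multiplication on $\wh{T\fb}$ used throughout. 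The rearranged double sum is again finite by conilpotency of $\fc$, so the cancellations leading to $(a\Delta)[(\_\boxt c_{\bar1})\phi\tens(\_\boxt c_{\bar2})\phi]$ go through verbatim.

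The hard part will be the level and convergence bookkeeping rather than any new algebra: one must confirm that passing to the completion $\wh{T\fb}$ and replacing $\tens$ by $\hat\tens$ preserves the exact cancellations of the unfiltered coderivation computation, and that every $g\psi$ produced in the first case genuinely satisfies the tensor convergence clause of $\cncCat$. Once the recursion is seen to produce a unique filtration preserving $\check\psi$ with $\eta\cdot\check\psi=0$, Proposition~\ref{pro-tensor-ncCat} reassembles it into the unique $\psi$ with $\ev\circ(\fa\boxt\psi)=\phi$, completing the proof.
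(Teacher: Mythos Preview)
Your proposal is correct and follows essentially the same approach as the paper's proof: the same two-case split on $c=\eta(1_g)$ versus $c\in\bar\fc$, the same recursion \eqref{eq-acpsi-acphi-filtered} over the product poset $\NN^q$, and the same verbatim coderivation computation transplanted from \propref{prop-psi-ev} with $\hat\tens$ in place of $\tens$. Your treatment is slightly more explicit than the paper's in two places---you spell out why $g\psi$ satisfies the tensor convergence clause of $\cncCat$ (the paper simply asserts $g\psi\in\cncCat(\fa,\wh{T\fb})$), and you flag the inductive filtration-level check (the paper compresses this to the one line that $c\in\cf^l\bar\fc(X,Y)^d$ forces $c\check\psi\in\cf^l\und{\Lambda\modul_\LL}(\ldots)^d$)---but these are elaborations of the same argument, not a different route.
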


\begin{proof}
An augmentation preserving cofunctor 
\(\psi:\fc\to T\Coder(\hat\fa,\wh{T\fb})\cong T\Coder(\fa,\wh{T\fb})\) 
(see \corref{cor-coderivations-aB}) is described by an arbitrary quiver 
map
\(\check{\psi}
=\psi\cdot\pr_1:\fc\to\Coder(\fa,\wh{T\fb})\in\Lambda\modul_\LL\Quiv\) 
such that \(\eta\cdot\check{\psi}=0\) by \propref{pro-tensor-ncCat}.
Let \(\phi:\fa\boxt\fc\to\wh{T\fb}\) be a cofunctor.
It equals the cofunctor 
\((\fa\boxt\psi)\cdot\ev:\fa\boxt\fc\to\wh{T\fb}\) if the equation
\[ \sum_{k\ge0}(a\boxt c\Delta^{(k)}\check{\psi}^{\tens k})\ev 
=(a\boxt c)\phi, \qquad a\in\fa^\bull, \ c\in\fc^\bull,
\]
holds (by \thmref{thm-Cofunctors-1st-projection}(i) and 
\propref{pro-restriction-universality-ff'}).
It suffices to consider two cases.
In the first one \(c=\eta(1_g)\) for some \(g\in\Ob\fc\).
Then the equation takes the form \((a)(g\psi)=(a\boxt c)\phi\) which 
defines the cofunctor \(g\psi\in\cncCat(\fa,\wh{T\fb})\) in the left 
hand side.

In the second case \(c\in\cf^l\bar{\fc}^d\) the equation takes the form
\begin{equation*}
(a)(c)\check{\psi} 
+\sum_{k\ge2}(a\boxt c\Delta^{(k)}\check{\psi}^{\tens k})\ev
=(a\boxt c)\phi, \qquad a\in\fa^\bull, \ c\in\bar{\fc}^\bull.
\end{equation*}
Since \(\eta\cdot\check{\psi}=0\) the comultiplication $\Delta$ can be 
replaced with $\bar{\Delta}$.
The structure of \(\fc=T\fc^1\bdb T\fc^q\) is such that the component 
\(\psi_{i_1,\dots,i_q}\) in the left hand side of 
\begin{equation}
(a)(c)\check{\psi} =(a\boxt c)\phi 
-\sum_{k\ge2}(a\boxt c\bar{\Delta}^{(k)}\check{\psi}^{\tens k})\ev, 
\qquad a\in\fa^\bull, \ c\in\bar{\fc}^\bull,
\label{eq-acpsi-acphi-filtered}
\end{equation}
is expressed via the components \(\psi_{j_1,\dots,j_q}\) with smaller 
indices \((j_1,\dots,j_q)\) in the product poset \(\NN^q\).
For \(c\in\cf^l\bar{\fc}(X,Y)^d\), \(X=(X_1,\dots,X_q)\), 
\(Y=(Y_1,\dots,Y_q)\), \(X_i,Y_i\in\Ob\fc^i\), find $n\ge0$ such that 
\(c\bar{\Delta}^{(n+1)}=0\).
Equation \eqref{eq-acpsi-acphi-filtered} determines a unique collection 
of maps 
\(c\check{\psi}\in\cf^l\und{\Lambda\modul_\LL}
\bigl(\fa(U,V),\wh{T\fb}((U,X)\phi,(V,Y)\phi)\bigr)^d\).
It remains to verify that it is a coderivation.
We have to prove that
\[ (a)(c\check{\psi})\Delta_\fb 
=(a)\Delta_\fa[(\_\boxt X)\phi\tens(\_)(c\check{\psi}) 
+(\_)(c\check{\psi})\tens(\_\boxt Y)\phi].
\]
The case $n=0$ being obvious, assume that $n\ge1$.
The sum in \eqref{eq-acpsi-acphi-filtered} goes from $k=2$ to $n$.
Correspondingly,
\[ (a)(c\check{\psi})\Delta 
=(a\Delta)[(\_\boxt c_{(1)})\phi\tens(\_\boxt c_{(2)})\phi]
-\sum_{k=2}^n [(a\Delta)\boxt(c_{\bar1}\check{\psi}\tdt 
c_{\bar k}\check{\psi})\Delta_{T\Coder}]\tau_{(23)}(\ev\tens\ev).
\]
Here according to Sweedler's notation \(c_{(1)}\tens c_{(2)}=c\Delta\).
Similarly, \(c_{\bar1}\tdt c_{\bar k}=c\bar{\Delta}^{(k)}\).
Recall the middle four interchange 
\([(a\tens b)\boxt(c\tens d)]\tau_{(23)}
=(-1)^{bc}(a\boxt c)\tens(b\boxt d)\).
The above expression has to be equal to
\begin{multline*}
(a\Delta)\Bigl\{(\_\boxt1_{X})\phi\tens\bigl[(\_\boxt c)\phi 
-\sum_{k=2}^n\bigl(\_\boxt(c_{\bar1}\check{\psi}\tdt 
c_{\bar k}\check{\psi})\bigr)\ev \bigr]\Bigr\}
\\
+(a\Delta)\Bigl\{\bigl[(\_\boxt c)\phi 
-\sum_{k=2}^n\bigl(\_\boxt(c_{\bar1}\check{\psi}\tdt 
c_{\bar k}\check{\psi})\bigr)\ev\bigr] \tens(\_\boxt1_{Y})\phi\Bigr\}.
\end{multline*}
Canceling the above terms we come to identity to be checked
\begin{equation}
(a\Delta)[(\_\boxt c_{\bar1})\phi\tens(\_\boxt c_{\bar2})\phi] 
=\sum_{k=2}^n \bigl[(a\Delta)\boxt(c_{\bar1}\check{\psi}\tdt c_{\bar k}
\check{\psi})\bar\Delta_{T\Coder}\bigr]\tau_{(23)}(\ev\tens\ev).
\label{eq-c1phi-c2phi-filtered}
\end{equation}
The right hand side equals
\begin{align*}
&\sum_{k=2}^n\sum_{i=1}^{k-1} \Bigl\{(a\Delta)\boxt\bigl[(c_{\bar1}
\check{\psi}\tdt c_{\bar i}\check{\psi})\bigotimes(c_{\overline{i+1}}
\check{\psi}\tdt c_{\bar k}\check{\psi}) \bigr]\Bigr\}\tau_{(23)}
(\ev\tens\ev) \notag
\\
&=\sum_{k=2}^n\sum_{i=1}^{k-1} (a\Delta) \Bigl\{\bigl[\_\boxt(c_{\bar1}
\check{\psi}\tdt c_{\bar i}\check{\psi})\bigl]\ev \tens \bigl[\_\boxt
(c_{\overline{i+1}}\check{\psi}\tdt c_{\bar k}\check{\psi})\bigr]\ev 
\Bigr\} \notag
\\
&=\sum_{i=1}^n\sum_{j=1}^n (a\Delta) \Bigl\{\bigl[\_\boxt(c_{\bar1}
\check{\psi}\tdt c_{\bar i}\check{\psi})\bigl]\ev \tens \bigl[\_\boxt
(c_{\overline{i+1}}\check{\psi}\tdt c_{\overline{i+j}}\check{\psi})
\bigr]\ev \Bigr\} \notag
\\
&=(a\Delta)[(\_)(c_{\bar1}F)\tens(\_)(c_{\bar2}F)],
\end{align*}
where
\[ (a)(cF) =\sum_{i=1}^n\bigl[a\boxt(c_{\bar1}\check{\psi}\tdt 
c_{\bar i}\check{\psi})\bigl]\ev =(a)(c\check{\psi}) 
+\sum_{i=2}^n\bigl[a\boxt(c_{\bar1}\check{\psi}\tdt 
c_{\bar i}\check{\psi})\bigl]\ev =(a\boxt c)\phi
\]
due to \eqref{eq-acpsi-acphi-filtered}.
Hence the right hand side of \eqref{eq-c1phi-c2phi-filtered} equals 
\((a\Delta)[(\_\boxt c_{\bar1})\phi\tens(\_\boxt c_{\bar2})\phi]\), 
which is the left hand side of \eqref{eq-c1phi-c2phi-filtered}.
\end{proof}

Let $\fa$ be a conilpotent cocategory and let $\fb$, $\fc$ be quivers.
Consider the cofunctor given by the upper right path in the diagram
\begin{diagram}
	\fa\boxt T\Coder(\fa,\wh{T\fb})\boxt T\Coder(\wh{T\fb},\wh{T\fc}) &
	\rTTo^{\ev\boxt1} & \wh{T\fb}\boxt T\Coder(\wh{T\fb},\wh{T\fc})
	\\
	\dTTo<{1\boxt M} & = & \dTTo>\ev
	\\
	\fa\boxt T\Coder(\fa,\wh{T\fc}) & \rTTo^\ev & \wh{T\fc}
\end{diagram}
By \thmref{thm-Ev-psi} there is a unique augmentation preserving 
cofunctor
\[ M:  T\Coder(\fa,\wh{T\fb})\boxt T\Coder(\wh{T\fb},\wh{T\fc})
\to  T\Coder(\fa,\wh{T\fc}).
\]

Denote by $\1$ the unit object $\boxt^0$ of the monoidal category of 
cocategories, that is, $\Ob\1=\{*\}$, $\1(*,*)=\Lambda$.
Denote by $\rightunit:\fa\boxt\1\to\fa$ and 
$\leftunit:\1\boxt\fa\to \fa$ the corresponding natural isomorphisms.
By \thmref{thm-Ev-psi} there exists a unique augmentation preserving 
cofunctor $\eta_{\wh{T\fb}}:\1\to T\Coder(\wh{T\fb},\wh{T\fb})$, 
such that
\[ \rightunit = \bigl(\wh{T\fb}\boxt\1 \rTTo^{1\boxt\eta_{\wh{T\fb}}} 
\wh{T\fb}\boxt T\Coder(\wh{T\fb},\wh{T\fb}) \rTTo^\ev \wh{T\fb}\bigr).
\]
Namely, the object $*\in\Ob\1$ goes to the identity homomorphism 
$\id_{\wh{T\fb}}:\wh{T\fb}\to \wh{T\fb}$.

The following statement follows from \thmref{thm-Ev-psi}.

\begin{proposition}
The multiplication $M$ is associative and $\eta$ is its two-sided unit:
	\begin{diagram}[nobalance]
T\!\Coder(\fa,\wh{T\fb})\!\boxt\! T\!\Coder(\wh{T\fb},\wh{T\fc})
\!\boxt\! T\!\Coder(\wh{T\fc},\wh{T\fd}) &\rTTo^{M\boxt1} 
&T\!\Coder(\fa,\wh{T\fc})\!\boxt\! T\!\Coder(\wh{T\fc},\wh{T\fd}) 
\\
		\dTTo<{1\boxt M} && \dTTo>M 
		\\
T\!\Coder(\fa,\wh{T\fb})\!\boxt\! T\!\Coder(\wh{T\fb},\wh{T\fd}) 
&\rTTo^M &T\!\Coder(\fa,\wh{T\fd})
	\end{diagram}
\end{proposition}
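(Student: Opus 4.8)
The plan is to deduce associativity and the unit laws from the \emph{uniqueness} half of \thmref{thm-Ev-psi}, exactly as one proves associativity of internal composition in a closed monoidal category. The key structural observation is that every source occurring in the square defining $M$ is a $\boxt$\n-product of tensor cocategories of coderivation quivers. Thus the common source of the two composites is $\fc=T\Coder(\fa,\wh{T\fb})\boxt T\Coder(\wh{T\fb},\wh{T\fc})\boxt T\Coder(\wh{T\fc},\wh{T\fd})$, which is of the form $T\fc^1\boxt T\fc^2\boxt T\fc^3$, and \thmref{thm-Ev-psi} (with its $\fb$ replaced by $\fd$ and first object the conilpotent $\fa$) supplies a bijection between augmentation preserving cofunctors $\fc\to T\Coder(\fa,\wh{T\fd})$ and cofunctors $\fa\boxt\fc\to\wh{T\fd}$, via $\psi\mapsto(\fa\boxt\psi)\cdot\ev$. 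Since $M$ is an augmentation preserving cofunctor, both $(M\boxt1)\cdot M$ and $(1\boxt M)\cdot M$ lie in $\ncCat\bigl(\fc,T\Coder(\fa,\wh{T\fd})\bigr)$, so by injectivity it suffices to prove that their images under this bijection agree.

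First I would isolate the one identity that characterises $M$, namely the commutativity of the displayed square, which reads $(\ev\boxt1)\cdot\ev=(1\boxt M)\cdot\ev$ and holds for every admissible triple of objects among $\fa,\wh{T\fb},\wh{T\fc},\wh{T\fd}$ (the first entry being a plain conilpotent cocategory, handled by \thmref{thm-Ev-psi} directly, or a completed tensor cocategory, handled by the same theorem together with the universality of completion, \propref{pro-restriction-universality-ff'}). Feeding $\fa\boxt\bigl((1\boxt M)\cdot M\bigr)$ into $\ev$ and applying this identity to the outer $M$, then using functoriality of $\boxt$ to slide the inner $M$ (which acts on the disjoint factors $T\Coder(\wh{T\fb},\wh{T\fc})\boxt T\Coder(\wh{T\fc},\wh{T\fd})$) past the evaluation acting on $\fa\boxt T\Coder(\fa,\wh{T\fb})$, and applying the defining identity once more to the triple $(\wh{T\fb},\wh{T\fc},\wh{T\fd})$, rewrites the composite as the iterated evaluation $(\ev\boxt1\boxt1)\cdot(\ev\boxt1)\cdot\ev$. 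Running the analogous procedure on $\fa\boxt\bigl((M\boxt1)\cdot M\bigr)$ — peeling off the outer $M$ and then recognising the inner composite as $\bigl((1\boxt M)\cdot\ev\bigr)\boxt1$, to which the defining identity again applies — produces the same iterated evaluation. Hence the two images coincide and injectivity in \thmref{thm-Ev-psi} yields $(M\boxt1)\cdot M=(1\boxt M)\cdot M$.

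The unit laws follow by the same method. For the right unit I would compare $\ev$\n-images of $(1\boxt\eta_{\wh{T\fb}})\cdot M$ and of $\rightunit$: composing $\fa\boxt\bigl((1\boxt\eta_{\wh{T\fb}})\cdot M\bigr)$ with $\ev$ and applying the defining identity of $M$ to the triple $(\fa,\wh{T\fb},\wh{T\fb})$ turns it into $(\ev\boxt1)\cdot(1\boxt\eta_{\wh{T\fb}})\cdot\ev$, whereupon the defining equation $\rightunit=(1\boxt\eta_{\wh{T\fb}})\cdot\ev$ for $\eta_{\wh{T\fb}}$ collapses the tail to the unit isomorphism; naturality of $\rightunit$ applied to $\ev$ identifies the result with $(\fa\boxt\rightunit)\cdot\ev$, the $\ev$\n-image of $\rightunit$, and injectivity gives $(1\boxt\eta_{\wh{T\fb}})\cdot M=\rightunit$. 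The left unit is symmetric: here the first object is a completed tensor cocategory, $\eta_{\wh{T\fb}}$ sends $*$ to $\id_{\wh{T\fb}}$, so $\ev$ precomposed with $\eta_{\wh{T\fb}}$ in the first slot is the identity, and $\leftunit$ plays the role of $\rightunit$.

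The main obstacle I anticipate is organisational rather than conceptual: one must keep precise track of which tensor factor each copy of $M$, each $\ev$, and each of $\rightunit,\leftunit$ acts upon, and justify every rearrangement by the functoriality and middle-four-interchange properties of $\boxt$, so that the two reduction sequences genuinely terminate at the \emph{identical} iterated evaluation $(\ev\boxt1\boxt1)\cdot(\ev\boxt1)\cdot\ev$. A secondary point of care is that the defining identity of $M$ must be invoked for triples whose first entry is a completed tensor cocategory such as $\wh{T\fb}$; there \thmref{thm-Ev-psi} is applied through the universality of completion (\propref{pro-restriction-universality-ff'}, \corref{cor-coderivations-aB}), replacing $\wh{T\fb}$ by $T\fb\in\ncCat$. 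Finally one should note at the outset that all composites in play are augmentation preserving cofunctors into the tensor cocategory $T\Coder(\fa,\wh{T\fd})$, so that \thmref{thm-Ev-psi} is literally applicable for the concluding injectivity step; this is immediate because $M$ and $\ev$ preserve the augmentation by construction.
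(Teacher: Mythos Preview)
Your proposal is correct and is exactly the argument the paper has in mind: the paper's entire proof is the single sentence ``The following statement follows from \thmref{thm-Ev-psi}'', and what you have written is the standard unpacking of that sentence --- compare both legs of the associativity square (and of the unit triangles) under the bijection of \thmref{thm-Ev-psi}, reduce each via the defining equation $(1\boxt M)\cdot\ev=(\ev\boxt1)\cdot\ev$ to the common iterated evaluation, and conclude by injectivity. Your remarks about handling the case where the first slot is a completed tensor cocategory via \propref{pro-restriction-universality-ff'} and \corref{cor-coderivations-aB} are apt and match how the paper silently identifies $\Coder(\wh{T\fb},-)$ with $\Coder(T\fb,-)$.
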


The multiplication $M$ is computed explicitly in 
\cite[\S4]{Lyu-AinfCat}, see, in particular, Examples~4.2 there.

\section{Filtered \texorpdfstring{$A_\infty$}{A8}-categories}
\label{sec-Filtered-A8-categories}
For a filtered graded quiver $\ca$ denote by \(s\ca=\ca[1]\) the same 
quiver with the shifted grading, \(\ca[1]^n=\ca^{n+1}\).
The shift commutes with the completion.
By $s$ we denote also the ``identity'' map $s:\ca\to\ca[1]$, 
\(\ca^n\ni x\mapsto x\in\ca[1]^{n-1}\), of degree $-1$.

\begin{definition}
A filtered \ainf-category $\ca$ is an $\LL$\n-filtered $\ZZ$\n-graded 
quiver $\ca$, equipped with a coderivation 
$b:\Id\to\Id:\hat Ts\ca\to\hat Ts\ca$ of degree 1 and of level 0, such 
that\footnote{I am grateful to Kaoru Ono for explaining the reasons why 
the differential preserves the grading in Fukaya categories.}
the collection $b:\hat Ts\ca(X,Y)\to\hat Ts\ca(X,Y)$ satisfies $b^2=0$.
Another name -- curved \ainf-category.
De~Deken and Lowen use the name of filtered $cA_{\infty}$-category 
\cite{MR3856926}.
\end{definition}

The codifferential $b$ is determined in a unique fashion by the 
collection of morphisms 
\(\check b=b\cdot\wh{\pr_1}\in
\cf^0\und{\Lambda\modul_\LL}(\hat Ts\ca(X,Y),s\hat\ca(X,Y))^1\), 
\(X,Y\in\Ob\ca\), equivalently, by the collection of morphisms 
\(\check b=b\cdot\wh{\pr_1}\in
\cf^0\und{\Lambda\modul_\LL}(Ts\ca(X,Y),s\hat\ca(X,Y))^1\), 
\(X,Y\in\Ob\ca\), due to \corref{cor-a-hatb-coderivations}, 
equivalently, by the components 
\(b_n\in\cf^0\und{\Lambda\modul_\LL}(T^ns\ca(X,Y),s\hat\ca(X,Y))^1\), 
\(X,Y\in\Ob\ca\), $n\ge0$.
The codifferential $b$ is recovered from its components 
\(b_j:T^js\ca\to s\hat\ca\) due to Propositions 
\ref{pro-coderivations-bijection}, \ref{pro-coderivations-aB}:
\[ b =\sum_{i+j+k=n} (\yi1^{\wh\tens i})\hat\tens b_j\hat\tens
(\yi1^{\wh\tens k}): T^ns\ca\to (T^{\le n+1}s\ca)\sphat\,.
\]
The square $b^2$ is a (1,1)-coderivation of level 0 and of degree 2:
\[ b^2\Delta =b\Delta(1\hat\tens b+b\hat\tens1) 
=\Delta(1\hat\tens b+b\hat\tens1)^2 
=\Delta(1\hat\tens b^2+b^2\hat\tens1): Ts\ca \to \hat Ts\ca.
\]
Thus, the equation $b^2=0$ is equivalent to the system $(b^2)_n=0$, 
$n\ge0$.
The components of $b^2$ can be found via 
\remref{rem-sumprin-converges-to-Id} by insertion of $\id$ between $b$ 
and $b$.
Therefore, the equation $b^2=0$ can be written as
\[ \sum_{i+j+k=n} [(\yi1^{\wh\tens i})\hat\tens b_j\hat\tens
(\yi1^{\wh\tens k})] b_{i+1+k}=0: T^ns\ca\to s\hat\ca.
\]

Let $\ca$, $\cb$ be filtered \ainf-categories, let 
\(f^0,f^1,\dots,f^n:Ts\ca\to\hat Ts\cb\) be cofunctors (see 
\defref{def-cofunctor-aB}), and let \(r^1,\dots,r^n\) be coderivations 
of certain degrees and of some level as in 
\(f^0\rto{r^1} f^1\rto{r^2} \dots f^{n-1}\rto{r^n} f^n:
Ts\ca\to\hat Ts\cb\), 
$n\ge0$ (see \defref{def-coderivation-aB}).
Then \(r^1\tdt r^n\in T^n\Coder(Ts\ca,\hat Ts\cb)(f^0,f^n)\).
Let \(a\in(T^\bull s\ca)^\bull\).

\begin{proposition}\label{pro-B-F-KS-LH}
In the above assumptions there is a unique (1,1)-coderivation of degree 
1 and level 0 
$B:T\Coder(Ts\ca,\hat Ts\cb)\to T\Coder(Ts\ca,\hat Ts\cb)$, such that
	\begin{equation}
[a\boxt(r^1\tdt r^n)]\ev b =[a\boxt(r^1\tdt r^n)B]\ev 
+(-)^{r^1+\dots+r^n}[ab\boxt(r^1\tdt r^n)]\ev
	\label{eq-theta-b-B-theta}
	\end{equation}
for all \(a\in Ts\ca\), $n\ge0$, 
$r^1\tdt r^n\in T^n\Coder(Ts\ca,\hat Ts\cb)(f^0,f^n)$.
It satisfies $B^2=0$, thus, it gives an \ainf-structure to 
$s^{-1}\Coder(Ts\ca,\hat Ts\cb)\cong 
s^{-1}\Coder(\hat Ts\ca,\hat Ts\cb)$.
\end{proposition}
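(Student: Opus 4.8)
The plan is to turn the defining equation~\eqref{eq-theta-b-B-theta} into a factorization problem solved by the evaluation bijection of \thmref{thm-Ev-psi}. Write $\fa=Ts\ca$ and $V=\Coder(Ts\ca,\hat Ts\cb)$, so that $\ev:\fa\boxt TV\to\hat Ts\cb$. Rearranging~\eqref{eq-theta-b-B-theta}, the sought $B$ is characterized by $(1\boxt B)\cdot\ev=D$, where
\[
D:\fa\boxt TV\to\hat Ts\cb,\qquad [a\boxt c]D=[a\boxt c]\ev\cdot b-(-)^{r^1+\dots+r^n}[ab\boxt c]\ev
\]
for $c=r^1\tdt r^n$. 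So first I would show that $D$ is an $(\ev,\ev)$-coderivation, and then extract $B$ from it.

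That $D$ is an $(\ev,\ev)$-coderivation of degree $1$ and level $0$ is formal. As composites of maps, $D=\ev\cdot b-(b_\ca\boxt1)\cdot\ev$, the Koszul sign $(-)^{r^1+\dots+r^n}$ being exactly the one produced by letting $b_\ca\boxt1$ act on $a\boxt c$. The first summand is the cofunctor $\ev$ postcomposed with the $(\Id,\Id)$-coderivation $b$ of $\cb$ on $\hat Ts\cb$, hence an $(\ev,\ev)$-coderivation; the second summand is the $(\Id,\Id)$-coderivation $b_\ca\boxt1$ on $\fa\boxt TV$ precomposed with the cofunctor $\ev$, hence again an $(\ev,\ev)$-coderivation. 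Both have degree $1$ and level $0$ because the codifferentials $b$ of $\cb$ and $b_\ca$ of $\ca$ do (the latter restricted to $Ts\ca$ by \corref{cor-a-hatb-coderivations}). To pass from $D$ to $B$ I would use the coderivation form of \thmref{thm-Ev-psi}: since $\ev$ is the image of $\id_{TV}$ under that bijection, an $(\ev,\ev)$-coderivation of degree $1$ and level $0$ is the same datum as a $\Lambda[\eps]$-cofunctor $\ev+\eps D$ over the dual numbers $\Lambda[\eps]$ (with $\eps$ of degree $-1$ and level $0$), and \thmref{thm-Ev-psi} applied over $\Lambda[\eps]$ writes it uniquely as $(1\boxt\psi)\cdot\ev$ with $\psi=\id+\eps B$. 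This $B$ is then a $(1,1)$-coderivation $\Id\to\Id$ on $TV$ of degree $1$ and level $0$ with $(1\boxt B)\ev=D$, i.e.\ satisfying~\eqref{eq-theta-b-B-theta}, and uniqueness of $\psi$ yields uniqueness of $B$.

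The relation $B^2=0$ would then follow by applying~\eqref{eq-theta-b-B-theta} twice. Expanding $[a\boxt cB^2]\ev$ through $[a\boxt cB]\ev\cdot b$ and $[ab\boxt cB]\ev$ and substituting~\eqref{eq-theta-b-B-theta} into each, the two mixed contributions proportional to $[ab\boxt c]\ev\cdot b$ cancel by the sign bookkeeping, leaving
\[
[a\boxt cB^2]\ev=[a\boxt c]\ev\cdot b^2-[ab^2\boxt c]\ev .
\]
Since $b^2=0$ on $\hat Ts\cb$ and $b^2=0$ on $Ts\ca$ are precisely the defining equations of the filtered \ainf-categories $\cb$ and $\ca$, the right hand side vanishes for all $a$ and $c$. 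As $B^2$ is itself a $(1,1)$-coderivation on $TV$, the same bijection that produced $B$ shows that $(1\boxt B^2)\ev=0$ forces $B^2=0$.

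The main obstacle is the middle step: \thmref{thm-Ev-psi} is stated for cofunctors, so I must justify that $B\mapsto(1\boxt B)\ev$ is a bijection from $(\Id,\Id)$-coderivations on $TV$ onto $(\ev,\ev)$-coderivations $\fa\boxt TV\to\hat Ts\cb$, with all the level and convergence bookkeeping of the completed setting preserved. The dual-numbers reduction does this, but one must check that $\Lambda[\eps]$ is again a graded commutative complete $\LL$-filtered ring (so that \thmref{thm-Ev-psi} applies verbatim) and that extension of scalars commutes with the completions used throughout. Once $B$ is in hand, the final identification is immediate: a codifferential on the tensor cocategory of $\Coder(Ts\ca,\hat Ts\cb)$ is by definition a filtered \ainf-structure on $s^{-1}\Coder(Ts\ca,\hat Ts\cb)$, which by \corref{cor-coderivations-aB} is isomorphic to $s^{-1}\Coder(\hat Ts\ca,\hat Ts\cb)$.
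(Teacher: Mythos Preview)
Your route is genuinely different from the paper's and conceptually attractive: you recognize $D=\ev\cdot b-(b_\ca\boxt1)\cdot\ev$ as an $(\ev,\ev)$-coderivation (which the paper never isolates as such) and try to obtain $B$ as its preimage under the ``tangent at $\id_{TV}$'' of the bijection in \thmref{thm-Ev-psi}. The paper instead builds $B$ by induction on the components $B_n$: at stage $n$ it defines $(r^1\tdt r^n)B_n$ from equation~\eqref{eq-theta-b-B-theta} with the lower $B_j$ already known, and then verifies by a direct (and rather long) calculation that this map is an $(f^0,f^n)$-coderivation. Your clean observation that $D$ is a coderivation is the conceptual reason behind that calculation.

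The gap is in the dual-numbers step. Over $\Lambda[\eps]$, the target of the bijection in \thmref{thm-Ev-psi} is $T\Coder_{\Lambda[\eps]}\bigl(\fa_{\Lambda[\eps]},(\hat Ts\cb)_{\Lambda[\eps]}\bigr)$, whose set of \emph{objects} consists of $\Lambda[\eps]$-cofunctors, i.e.\ pairs $(f,g)$ with $f$ a $\Lambda$-cofunctor and $g$ a degree-$1$ $(f,f)$-coderivation; this is strictly larger than $\Ob TV$. Hence the $\psi$ produced by the theorem is not an endomorphism of $(TV)_{\Lambda[\eps]}$, and ``$\psi=\id+\eps B$'' does not typecheck. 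Concretely, $\Ob\psi$ sends $g\mapsto(g,\,gb-bg)$, which already encodes $B_0$, and the higher $B_n$ sit inside the morphism part of $\psi$; extracting them amounts to exactly the inductive unpacking the paper carries out. So the base-change issues you flag (completeness of $\Lambda[\eps]$, compatibility with completion) are the easy part; the real obstacle is that the tangent of \thmref{thm-Ev-psi} at $\id$ is not \emph{automatically} a bijection between the two spaces of coderivations --- proving it is essentially the paper's induction. Your argument for $B^2=0$ is fine and matches the paper's.
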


\begin{proof}
$B$ is determined by its components 
$B_j:T^j\Coder(Ts\ca,\hat Ts\cb)\to \Coder(Ts\ca,\hat Ts\cb)$ of degree 
1 and level 0 due to \propref{pro-coderivations-components-T}:
\[ B =\sum_{i+j+k=n} 1^{\tens i}\tens B_j\tens1^{\tens k}: 
T^n\Coder(Ts\ca,\hat Ts\cb)\to T^{\le n+1}\Coder(Ts\ca,\hat Ts\cb).
\]
For $n=0$ the equation reads \(f^0b=1_{f^0}B+bf^0\), where 
\(1_{f^0}=1\in T^0\Coder(Ts\ca,\hat Ts\cb)(f^0,f^0)=\Lambda\).
Hence, since both $f^0b$ and $bf^0$ are \((f^0,f^0)\)-coderivations, 
$B_0$ is found in a unique way as
\begin{equation}
1_{f^0}B_0 =f^0b-bf^0 \in \Coder(Ts\ca,\hat Ts\cb)(f^0,f^0).
\label{eq-1f0B0}
\end{equation}

Assume that the coderivation components $B_j$ for $j<n$ are already 
found so that \eqref{eq-theta-b-B-theta} is satisfied up to $n-1$ 
arguments.
Let us determine a $\Lambda$\n-linear map 
\((r^1\tdt r^n)B_n:Ts\ca\to\hat Ts\cb\) from 
\eqref{eq-theta-b-B-theta} rewritten in the form
\begin{multline*}
a.(r^1\tdt r^n)B_n =[a\boxt(r^1\tdt r^n)]\ev b 
-(-)^{r^1+\dots+r^n}[ab\boxt(r^1\tdt r^n)]\ev
\\
-\sum_{q+j+t=n}^{j<n} [a\boxt\{(r^1\tdt r^n)
(1^{\tens q}\tens B_j\tens1^{\tens t})\}]\ev.
\end{multline*}
Let us show that $(r^1\tdt r^n)B_n$ is a $(f^0,f^n)$-coderivation.
Indeed,
\begin{equation}
(r^1\tdt r^n)B_n\Delta =\Delta[f^0\hat\tens(r^1\tdt r^n)B_n
+(r^1\tdt r^n)B_n\hat\tens f^n]
\label{eq-r1tdtrnBn-Delta}
\end{equation}
due to computation
\begin{align*}
&a.(r^1\tdt r^n)B_n\Delta =[a\boxt(r^1\tdt r^n)]\Delta(\ev\hat\tens\ev)
(1\hat\tens b+b\hat\tens1)
\\
&-(-)^{r^1+\dots+r^n}[ab\boxt(r^1\tdt r^n)]\Delta(\ev\hat\tens\ev)
\\
&-\sum_{q+j+t=n}^{j<n} [a\boxt\{(r^1\tdt r^n)(1^{\tens q}\tens B_j\tens
1^{\tens t})\}]\Delta(\ev\hat\tens\ev).
\\
&=\sum_{k+l=n}(a\Delta)\{[\text-\boxt(r^1\tdt r^k)]\ev\hat\tens
[\text-\boxt(r^{k+1}\tdt r^n)]\ev\}(1\hat\tens b+b\hat\tens1)
\\
&-(-)^{r^1+\dots+r^n} \!\!\sum_{k+l=n}\!(a\Delta)(1\tens b+b\tens1)
\{[\text-\boxt(r^1\tdt r^k)]\ev\hat\tens[\text-\boxt(r^{k+1}\tdt r^n)]\ev\}
\\
&-\sum_{q+j+t=n}^{j<n}(a\Delta)\sum_{k+v=q}
[\text-\boxt(r^1\tdt r^k)]\ev\hat\tens[\text-\boxt\{(r^{k+1}\tdt r^n)
(1^{\tens v}\tens B_j\tens1^{\tens t})\}]\ev
\\
&-\!\!\!\sum_{k=0}^n(-)^{r^{k+1}+\dots+r^n}(a\Delta)\!\!\!\!\!\!
\sum_{q+j+w=k}^{j<n}\!\!\!\!\![\text-\boxt\{(r^1\!\tens...\tens\!
r^k)(1^{\tens q}\!\tens\!B_j\!\tens\!1^{\tens w})\}]\!\ev\!\hat\tens
[\text-\boxt(r^{k+1}\!\tens...\tens\!r^n)]\!\ev
\\
&=(a\Delta)\Bigl\langle \sum_{k+l=n}[\text-\boxt(r^1\tdt r^k)]\ev
\hat\tens[\text-\boxt(r^{k+1}\tdt r^n)]\ev b
\\
&-\sum_{k+l=n}(-)^{r^{k+1}+\dots+r^n} [\text-\boxt(r^1\tdt r^k)]
\ev\hat\tens[\text-b\boxt(r^{k+1}\tdt r^n)]\ev
\\
&-\sum_{k+v+j+t=n}^{j<n} [\text-\boxt(r^1\tdt r^k)]\ev\hat\tens[\text-
\boxt\{(r^{k+1}\tdt r^n)(1^{\tens v}\tens B_j\tens1^{\tens t})\}]\ev
\\
&+\sum_{k+l=n}(-)^{r^{k+1}+\dots+r^n} [\text-\boxt(r^1\tdt r^k)]\ev 
b\hat\tens[\text-\boxt(r^{k+1}\tdt r^n)]\ev
\\
&-\sum_{k+l=n}(-)^{r^1+\dots+r^n} [\text-b\boxt(r^1\tdt r^k)]\ev\hat
\tens[\text-\boxt(r^{k+1}\tdt r^n)]\ev
\\
&-\!\!\sum_{k=0}^n(-)^{r^{k+1}+\dots+r^n}\!\!\!\!\!
\sum_{q+j+w=k}^{j<n}\!\!\!\![\text-\boxt\{(r^1\!\tens...\tens\!
r^k)(1^{\tens q}\!\tens\!B_j\!\tens\!1^{\tens w})\}]
\ev\hat\tens[\text-\boxt(r^{k+1}\!\tens...\tens\!r^n)]\ev \Bigr\rangle.
\end{align*}
The sum of the first three expressions in angle brackets equals its 
restriction to $k=0$:
\begin{multline*}
f^0\hat\tens[\text-\boxt(r^1\tdt r^n)]\ev b
-(-)^{r^1+\dots+r^n}f^0\hat\tens[\text-b\boxt(r^1\tdt r^n)]\ev
\\
-\sum_{v+j+t=n}^{j<n}f^0\hat\tens[\text-\boxt\{(r^1\tdt r^n)(1^{\tens v}
\tens B_j\tens1^{\tens t})\}]\ev =f^0\hat\tens(r^1\tdt r^n)B_n.
\end{multline*}
The sum of the last three expressions in angle brackets equals its 
restriction to $k=n$:
\begin{multline*}
[\text-\boxt(r^1\tdt r^n)]\ev b\hat\tens f^n
-(-)^{r^1+\dots+r^n}[\text-b\boxt(r^1\tdt r^n)]\ev\hat\tens f^n
\\
-\sum_{q+j+w=n}^{j<n}[\text-\boxt\{(r^1\tdt r^n)(1^{\tens q}\tens B_j
\tens1^{\tens w})\}]\ev\hat\tens f^n =(r^1\tdt r^n)B_n\hat\tens f^n.
\end{multline*}
This proves \eqref{eq-r1tdtrnBn-Delta}.

Notice that $B^2$ is a (1,1)-coderivation of level 0 and of degree 2:
\[ B^2\Delta =B\Delta(1\tens B+B\tens1) =\Delta(1\tens B+B\tens1)^2 
=\Delta(1\tens B^2+B^2\tens1).
\]
Since $b^2=0$, we have for \(a\in(T^\bull s\ca)^\bull\), $n\ge0$, 
\(r_i\in\Coder(Ts\ca,\hat Ts\cb)\),
\begin{multline*}
[a\boxt(r^1\tdt r^n)B^2]\ev =[a\boxt(r^1\tdt r^n)B]\ev b 
-(-)^{r^1+\dots+r^n+1}[ab\boxt(r^1\tdt r^n)B]\ev
\\
=-(-)^{r^1+\dots+r^n}[ab\boxt(r^1\tdt r^n)]\ev b 
-(-)^{r^1+\dots+r^n+1}[ab\boxt(r^1\tdt r^n)]\ev b =0.
\end{multline*}
Composing this equality with \(\wh{\pr_1}:\hat Ts\cb\to s\hat\cb\) we 
get \(0=[a\boxt(r^1\tdt r^n)B^2]\ev\wh{\pr_1}\).
Substituting into this expression the expansion of $B^2$ into components
\(B^2=\sum_{i+j+k=n}1^{\tens i}\tens(B^2)_j\tens1^{\tens k}\), we find 
that all summands (except one), composed with $\ev$, map 
\(a\boxt(r^1\tdt r^n)\) into the ideal \(\wh{T^{\ge2}s\cb}\), and 
composed furthermore with \(\wh{\pr_1}\) vanish.
The only surviving summand satisfies
\[ 0 =[a\boxt(r^1\tdt r^n)(B^2)_n]\ev\wh{\pr_1} 
=(a)[(r^1\tdt r^n)(B^2)_n]\wh{\pr_1}.
\]
By \corref{cor-a-hatb-coderivations} the coderivation 
\((r^1\tdt r^n)(B^2)_n\in\Coder(Ts\ca,\hat Ts\cb)(f^0,f^n)\) vanishes.
Since this holds for all $n\ge0$, the coderivation $B^2$ vanishes.
\end{proof}

\begin{definition}
Let $\ca$, $\cb$ be filtered \ainf-categories.
A cofunctor \(f:Ts\ca\to\hat Ts\cb\) is called a 
\emph{filtered \ainf-functor} if $bf=fb$.
\end{definition}

Both sides of this equation are $(f,f)$\n-coderivations.
In components:
\begin{alignat*}2
(b\cdot f)_n &=\sum_{i+j+k=n}^{i,j,k\ge0} (1^{\hat\tens i}\hat\tens 
b_j\hat\tens1^{\hat\tens k}) \cdot f_{i+1+k} &&: T^ns\ca \to s\hat\cb,
\\
(f\cdot b)_n &=\sum_{i_1+\dots+i_k=n}^{k,i_j\ge0} (f_{i_1}\htdht
f_{i_k}) \cdot b_k &&: T^ns\ca \to s\hat\cb.
\end{alignat*}
Equality of these expressions for all $n\ge0$ is equivalent to condition
$bf=fb$ and, as we have seen, equivalent to $1_fB_0=0$.

Composing \eqref{eq-theta-b-B-theta} with 
\(\wh{\pr_1}:\hat Ts\cb\to s\hat\cb\) we find the components of 
coderivation $B:T\Coder(Ts\ca,\hat Ts\cb)\to T\Coder(Ts\ca,\hat Ts\cb)$.
Recall that $B_0$ is given by \eqref{eq-1f0B0}, components of $rB_1$ for
\(r:f\to g:Ts\ca\to\hat Ts\cb\) are found from
\begin{multline*}
(a)(rB_1)\wh{\pr_1} =(a)[rb-(-)^rbr]\wh{\pr_1}
\\
=\sum_{i,k\ge0} [a\Delta^{(i+1+k)}] \bigl[
(\check f^{\tens i}\tens\check r\tens\check g^{\tens k})b_{i+1+k} 
-(-)^r(\pr_1^{\tens i}\tens\check b\tens\pr_1^{\tens k})r_{i+1+k}\bigr].
\end{multline*}
Notice that, in general, \([r,b]\equiv rb-(-)^rbr\) is not a 
coderivation, unless the source and the target of $r$ are \ainf-functors
(cf. \remref{rem-[rb]-coderivation}).
In detail, denote by \((a)(rB_1)\spcheck\) the coderivation value 
\((a)(rB_1)\wh{\pr_1}\).
Then by \remref{rem-sumprin-converges-to-Id}
\begin{multline*}
(rB_1)\spcheck_0 =\sum_{i,k\ge0} 
(f_0^{\tens i}\tens r_0\tens g_0^{\tens k})b_{i+1+k} -(-)^rb_0r_1,
\\
\hskip\multlinegap (rB_1)\spcheck_1 =\sum_{i,k\ge0} 
(f_0^{\tens i}\tens r_1\tens g_0^{\tens k})b_{i+1+k}
+\sum_{m,n,k\ge0} (f_0^{\tens m}\tens f_1\tens f_0^{\tens n}
\tens r_0\tens g_0^{\tens k})b_{m+n+2+k} \hfill
\\
+\sum_{i,m,n\ge0} (f_0^{\tens i}\tens r_0\tens g_0^{\tens m}\tens 
g_1\tens g_0^{\tens n})b_{i+2+m+n}
-(-)^r[b_1r_1 +(1\tens b_0)r_2 +(b_0\tens1)r_2],
\end{multline*}
etc.
For $n\ge2$ we have
\begin{multline*}
(a)[(r^1\tdt r^n)B_n]\wh{\pr_1} =[a\boxt(r^1\tdt r^n)]\ev b\,\wh{\pr_1}=
\\
\sum_{i^0,i^1,\dots,i^n\ge0} \hspace*{-1em}[a\Delta^{(i^0+\dots+i^n+n)}]
\bigl((\check f^0)^{\tens i^0}\tens\check r^1\tens
(\check f^1)^{\tens i^1}\tens\check r^2\tdt
(\check f^{n-1})^{\tens i^{n-1}}\tens\check r^n\tens
(\check f^n)^{\tens i^n} \bigr)b_{i^0+\dots+i^n+n}.
\end{multline*}

\begin{remark}\label{rem-[rb]-coderivation}
Let \(f,g:Ts\ca\to\hat Ts\cb\) be filtered \ainf-functors and let $r$ 
be an $(f,g)$\n-coderivation of degree $d$ and of level $l$.
Then \([r,b]=rb-(-)^rbr\) is an $(f,g)$\n-coderivation of degree $d+1$ 
and of level $l$, in particular, $rB_1=[r,b]$.
Indeed,
\begin{align*}
(rb &-(-)^rbr)\Delta =r\Delta(1\hat\tens b+b\hat\tens1) 
-(-)^rb\Delta(f\hat\tens r+r\hat\tens g)
\\
&=\Delta[(f\hat\tens r+r\hat\tens g)(1\hat\tens b+b\hat\tens1) 
-(-)^r(1\hat\tens b+b\hat\tens1)(f\hat\tens r+r\hat\tens g)]
\\
&=\Delta[f\hat\tens(rb-(-)^rbr) +(-)^r(fb-bf)\hat\tens r 
+r\hat\tens(gb-bg) +(rb-(-)^rbr)\hat\tens g].
\end{align*}
\end{remark}

\appendix
\section{Reflective representable multicategories}
\label{ap-A-Reflective}
Let $\cv$ be a symmetric monoidal category, for instance, $\cv=\grAb$.
Let $\mcD$ be a lax representable plain/symmetric/\hspace{0pt}braided 
$\cv$\n-multicategory \cite[Definitions 3.7, 3.23]{BesLyuMan-book}, that
is, for all families \((M_i)_{i\in I}\) of objects of $\mcD$ the 
$\cv$\n-functors $\mcD((M_i)_{i\in I};-):\mcD\to\cv$ are representable.
By \cite[Theorem~3.24]{BesLyuMan-book} the $\cv$\n-multicategory $\mcD$ 
is isomorphic to $\cv$\n-multicategory $\wh\cd$ for a lax 
plain/\hspace{0pt}symmetric/braided monoidal $\cv$\n-category 
\(\cd=(\cd,\tens^I,\lambda^f,\rho^L)\).
The $\cv$\n-multicategory $\wh\cd$ has 
\(\wh{\cd}((M_i)_{i\in I};N)=\cd(\tens^I(M_i),N)\) 
(see \cite[Proposition~3.22]{BesLyuMan-book} for details).
We may and we will take for $\cd$ the category $\mcD$, that is, 
\(\Ob\cd=\Ob\mcD\), \(\cd(M,N)=\mcD(M;N)\).
Denote by $\tilde\mcD$ the plain/symmetric/braided multicategory with 
\(\Ob\tilde\mcD=\Ob\mcD\), 
\(\tilde\mcD((M_i)_{i\in I};N)=\cv(\1_\cv,\mcD((M_i)_{i\in I};N))\).
For instance, when $\cv=\grAb$ we have \(\tilde\mcD=\mcD^0\).
Instead of morphism \(f:\1_\cv\to\mcD((M_i)_{i\in I};N)\in\cv\) we write
\(f:(M_i)_{i\in I}\to N\in\tilde\mcD\).
The multicategory $\tilde\mcD$ is represented by the lax 
plain/symmetric/braided monoidal category \(\tilde\cd\) with 
\(\Ob\tilde\cd=\Ob\cd\), 
\(\tilde\cd(M,N)=\cv(\1_\cv,\cd(M,N))=\tilde\mcD(M;N)\).

Assume that $\Ob\mcD$ contains a subset $\Ob\cc$ such that the full 
subcategory $\cc\subset\cd$ is reflective.
Recall that this is equivalent to giving a morphism 
\(\yi_M:M\to\hat M\in\tilde\cd\) for every \(M\in\Ob\cd\), where 
\(\hat{M}\in\Ob\cc\) and for all \(N\in\Ob\cc\) the morphism 
\(\cd(\yi_M,N):\cd(\hat M,N)\to\cd(M,N)\) is invertible.
In other words, the inclusion $\cv$\n-functor 
\(\inj:\cc\hookrightarrow\cd\) has a left adjoint 
\(\wh{\text-}:\cd\to\cc\).
The unit of this adjunction is \(\yi:\Id_\cd\to\inj\circ\wh{\text-}\).
Denote by $\mcC$ the full plain/symmetric/braided 
$\cv$\n-submulticategory of $\mcD$ with \(\Ob\mcC=\Ob\cc\).

\begin{proposition}\label{prop-C-lax-monoidal}
The $\cv$\n-multicategory $\mcC$ is lax representable by a lax 
plain/symmetric/\hspace{0pt}braided monoidal $\cv$\n-category
\(\cc=(\cc,\hat{\tens}^I,\hat{\lambda}^f,\hat{\rho}^L)\) with
\begin{align}
\hat\tens^{i\in I}M_i &=\wh{\tens^{i\in I}M_i},
\label{eq-tens-hat-Mi}
\\
\hat{\lambda}^f &=\bigl[ \wh{\tens^{i\in I}M_i} \rTTo^{\wh{\lambda^f}} 
(\tens^{j\in J}\tens^{i\in f^{-1}j}M_i)\sphat 
\rTTo^{\wh{\tens^{j\in J}\yi}} 
(\tens^{j\in J}(\tens^{i\in f^{-1}j}M_i)\sphat\;)\sphat\; \bigr],
\\
\hat{\rho}^L &=\bigl( \hat{\tens}^LM =\wh{\tens^LM} 
\rTTo^{\wh{\rho^L}} \hat{M} \rTTo^{\yi_M^{-1}} M \bigr).
\label{eq-rho-hat-L}
\end{align}
\end{proposition}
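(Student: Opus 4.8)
The plan is to split the statement into two parts: that $\mcC$ is lax representable with tensor products as in \eqref{eq-tens-hat-Mi}, and that the structure constraints forced by \cite[Theorem~3.24]{BesLyuMan-book} are exactly the morphisms $\hat\lambda^f$ and $\hat\rho^L$ as written. Since coherence of the constraints comes for free from that theorem once representability is known, the real content is the identification of the two formulas.

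First I would establish representability. For a family $(M_i)_{i\in I}$ of objects of $\cc$ and any $N\in\Ob\cc$ there is a chain
\[ \mcC((M_i)_{i\in I};N) =\mcD((M_i)_{i\in I};N) =\cd(\tens^{i\in I}M_i,N) \cong\cc(\wh{\tens^{i\in I}M_i},N), \]
where the first two steps hold because $\mcC$ is the full submulticategory of $\mcD=\wh\cd$ on $\Ob\cc$, and the last isomorphism is the reflection adjunction $\wh{\text-}\dashv\inj$ evaluated at $\yi_{\tens^I M_i}$; naturality in $N$ is naturality of $\yi$. Thus $\wh{\tens^{i\in I}M_i}\in\Ob\cc$ represents $\mcC((M_i)_{i\in I};-)$, so $\mcC$ is lax representable with $\hat\tens^{i\in I}M_i=\wh{\tens^{i\in I}M_i}$, and \cite[Theorem~3.24]{BesLyuMan-book} endows $\cc$ with a lax monoidal structure whose constraints are the unique morphisms compatible with the universal multimorphisms.

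Next I would trace those universal multimorphisms. Under the displayed isomorphism the universal multimorphism $(M_i)_{i\in I}\to\hat\tens^{i\in I}M_i$ corresponds to the identity of $\wh{\tens^I M_i}$, hence to the $\cd$\n-morphism $\yi_{\tens^I M_i}$. Expanding the multicategory composite of the inner universal arrows $(M_i)_{i\in f^{-1}j}\to\hat\tens^{i\in f^{-1}j}M_i$ into the outer one as the usual tensor-then-compose in $\cd$ (which inserts $\lambda^f$), and using that each such universal arrow is again a unit $\yi$, the grouped composite $(M_i)_{i\in I}\to\hat\tens^{j\in J}\hat\tens^{i\in f^{-1}j}M_i$ is the $\cd$\n-morphism $\lambda^f\cdot(\tens^{j\in J}\yi)\cdot\yi$. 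Hence $\hat\lambda^f$ is characterised as the unique $\cc$\n-morphism with $\yi\cdot\hat\lambda^f=\lambda^f\cdot(\tens^{j\in J}\yi)\cdot\yi$, uniqueness being the universal property of $\yi_{\tens^I M_i}$.

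Finally I would check the two formulas against these characterisations using only naturality of $\yi$. For $\hat\lambda^f=\wh{\lambda^f}\cdot\wh{\tens^{j\in J}\yi}$, applying naturality of $\yi$ first to $\lambda^f$ and then to $\tens^{j\in J}\yi$ rewrites $\yi\cdot\hat\lambda^f$ as $\lambda^f\cdot(\tens^{j\in J}\yi)\cdot\yi$, as required. For $\hat\rho^L$ with $L$ a one-element set, the identity unary multimorphism of $M\in\cc$ corresponds to $\rho^L$, so $\hat\rho^L$ is the unique $\cc$\n-morphism with $\yi\cdot\hat\rho^L=\rho^L$; naturality gives $\yi\cdot\wh{\rho^L}\cdot\yi_M^{-1}=\rho^L\cdot\yi_M\cdot\yi_M^{-1}=\rho^L$, where $\yi_M$ is invertible because $M\in\cc$. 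The main obstacle is purely the bookkeeping in the previous paragraph: keeping the outer unit (on $\tens^I$) distinct from the inner units (on the $\tens^{i\in f^{-1}j}$) when writing the grouped composite, so that the single equation $\yi\cdot\hat\lambda^f=\lambda^f\cdot(\tens^{j\in J}\yi)\cdot\yi$ is set up with the $\yi$'s at the right layer; once that is correct, the identification is two applications of naturality.
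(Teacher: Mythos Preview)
Your proposal is correct and follows essentially the same route as the paper: establish representability of $\mcC$ via the reflection isomorphism $\cd(\tens^I M_i,N)\cong\cc(\wh{\tens^I M_i},N)$, then invoke \cite[Theorem~3.24]{BesLyuMan-book} and identify the resulting constraints by the equation $\yi\cdot\hat\lambda^f=\lambda^f\cdot(\tens^{j\in J}\yi)\cdot\yi$ together with naturality of $\yi$. The paper packages the last step as a single commutative square (whose top-right path is precisely your characterising composite and whose bottom row is the claimed formula), but the content is identical.
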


\begin{proof}
Without loss of generality we assume that \(\mcD=\wh\cd\), that is, 
\(\mcD((M_i)_{i\in I};N)=\cd(\tens^{i\in I}M_i,N)\).
In particular, $\tau:(M_i)_{i\in I}\to\tens^{i\in I}M_i\in\tilde{\mcD}$
corresponds to \(\id_{\tens^{i\in I}M_i}\in\tilde\cd\).
Supposing that \(N,M_i\in\Ob\mcC\) for $i\in I$ we get isomorphisms
\[ \mcC(\hat\tau;N) =\bigl[ \mcC(\wh{\tens^{i\in I}M_i};N) 
\rTTo^{\mcD(\yi;N)} \mcD(\tens^{i\in I}M_i;N) \rTTo^{\mcD(\tau;N)} 
\mcC((M_i)_{i\in I};N) \bigr],
\]
where
\[ \hat\tau =\bigl[ (M_i)_{i\in I} \rTTo^\tau \tens^{i\in I}M_i 
\rTTo^\yi \wh{\tens^{i\in I}M_i} \bigr] \in \tilde{\mcC}.
\]
Therefore, $\mcC$ is lax representable.

By the proof of Theorem~3.24 of \cite{BesLyuMan-book} the lax 
plain/symmetric/braided monoidal $\cv$\n-category 
\((\cc,\hat{\tens}^I,\hat{\lambda}^f,\hat{\rho}^L)\) has structure 
elements given precisely by \eqref{eq-tens-hat-Mi}--\eqref{eq-rho-hat-L}.
For instance, an expression from [\textit{ibid.}] is the top-right path 
in the following diagram
\begin{diagram}[h=2.15em]
\tens^{i\in I}M_i &\rTTo^{\lambda^f} 
&\tens^{j\in J}\tens^{i\in f^{-1}j}M_i &\rTTo^{\tens^{j\in J}\yi} 
&\tens^{j\in J}(\tens^{i\in f^{-1}j}M_i)\sphat\;
\\
\dTTo<\yi &&\dTTo>\yi &&\dTTo>\yi
\\
\wh{\tens^{i\in I}M_i} &\rTTo^{\wh{\lambda^f}} 
&(\tens^{j\in J}\tens^{i\in f^{-1}j}M_i)\sphat 
&\rTTo^{\wh{\tens^{j\in J}\yi}} 
&(\tens^{j\in J}(\tens^{i\in f^{-1}j}M_i)\sphat\;)\sphat\;
\end{diagram}
The same expression has to be equal to \(\yi\cdot\hat{\lambda}^f\).
Since the diagram commutes, \(\hat{\lambda}^f\) is equal to the bottom 
composition \(\wh{\lambda^f}\cdot\wh{\tens^{j\in J}\yi}\).
\end{proof}

Since \(\cc\subset\cd\) is a full reflective subcategory there is an 
idempotent monad \(\hat{\text-}:\cd\to\cd\), \(M\mapsto\hat{M}\), with 
the unit \(\yi:\Id_\cd\to\hat{\text-}\), \(M\to\hat{M}\), and 
multiplication \(\mu_M:\hat{\hat{M}}\to\hat{M}\) inverse to
\(\yi_{\hat{M}}=\hat{\yi}_M:\hat{M}\to\hat{\hat{M}}\) 
\cite[Corollary~4.2.4]{Borceux:CatHandbook2} (see enriched version at 
the end of Chapter~1 of \cite{KellyGM:bascec}).

\textbf{Assume furthermore} that \(\hat{\text-}\) extends to a lax 
plain/symmetric/braided monoidal functor \((\hat{\text-},\phi^n)\),
\(\phi^n:\tens_{i=1}^n\hat{M}_i\to\wh{\tens_{i=1}^nM_i}\), and the 
unit $\yi$ satisfies condition~\eqref{eq-yiyi-phi=yi}:
\begin{equation*}
\bigl(M_1\tdt M_n \rTTo^{\yi_1\tdt\yi_n} \hat{M}_1\tdt\hat{M}_n 
\rTTo^{\phi^n} \wh{M_1\!\!\tens\!\cdot\!\!\cdot\!\!\cdot\!\tens\!\!M_n} 
\bigr) =\yi_{M_1\tdt M_n}.
\end{equation*}
That is, $\yi$ is a monoidal transformation in the sense of 
\cite[Definition~2.7]{BesLyuMan-book}.
Therefore, \(\mu:(\hat{\text-},\phi^n)^2\to(\hat{\text-},\phi^n)\) 
is a monoidal transformation as well.
Indeed, this follows from the commutative diagram
\begin{diagram}[w=5em]
\tens_{i=1}^n\hat{\hat{M}}_i&\rTTo^{\phi^n} &\wh{\tens_{i=1}^n\hat{M}_i}
&\rTTo^{\wh{\phi^n}} &\wh{\wh{\tens_{i=1}^nM_i}}
	\\
\dTTo<{\tens\mu_{M_i}} \uTTo>{\tens\wh{\yi_{M_i}}} &= 
&&\luTTo<{\wh{\tens\yi_{M_i}}}>= &\uTTo<{\wh{\yi_{\tens M_i}}} 
\dTTo>{\mu_{\tens M_i}}
	\\
	\tens_{i=1}^n\hat{M}_i &&\rTTo^{\phi^n} &&\wh{\tens_{i=1}^nM_i}
\end{diagram}
Summing up, \(((\hat{\text-},\phi^n),\yi,\mu)\) is an idempotent lax 
plain/symmetric/braided monoidal monad.

Let \(\{1,2,\dots,n\}=S\sqcup P\).
Given \(M_i\in\Ob\cd\), define
\[ N_i =
\begin{cases}
M_i, &\quad \text{if } i\in S,
\\
\hat{M}_i, &\quad \text{if } i\in P.
\end{cases}
\]
Let \(\yi^0=1\), \(\yi^1=\yi\).
Define
\[ \chi(i\in P) =
\begin{cases}
0, &\quad \text{if } i\in S,
\\
1, &\quad \text{if } i\in P
\end{cases}
\]
and \(\chi(i\in S)=1-\chi(i\in P)\).
Similarly to \cite[Proposition~2.27]{MR3856926} we prove

\begin{proposition}\label{pro-1yi1yi1yi1-invertible}
The morphism 
\(\wh{\tens\yi_{M_i}^{\chi(i\in P)}}:
\wh{\tens_{i=1}^nM_i}\to\wh{\tens_{i=1}^nN_i}\) 
is invertible.
\end{proposition}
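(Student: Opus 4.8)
The plan is to exhibit an explicit two\n-sided inverse for $\wh g$, where I abbreviate $g=\tens_{i=1}^n\yi_{M_i}^{\chi(i\in P)}\colon\tens_{i=1}^nM_i\to\tens_{i=1}^nN_i$, the morphism whose $i$\n-th tensor factor is $\yi_{M_i}$ when $i\in P$ and $\id_{M_i}$ when $i\in S$. The inverse is assembled from the lax monoidal structure map $\phi^\bull$ and the multiplication $\mu$. I would introduce the \emph{complementary} partial unit $u\colon\tens_{i=1}^nN_i\to\tens_{i=1}^n\hat M_i$, whose $i$\n-th factor is $\yi_{M_i}$ when $i\in S$ and $\id_{\hat M_i}$ when $i\in P$, put $\psi=\bigl(\tens_{i=1}^nN_i\rto{u}\tens_{i=1}^n\hat M_i\rto{\phi^n}\wh{\tens_{i=1}^nM_i}\bigr)$, and set $r=\wh\psi\cdot\mu_{\tens M_i}\colon\wh{\tens N_i}\to\wh{\tens M_i}$. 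All compositions are written in diagrammatic order, as elsewhere in the paper.

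First I would compute $g\cdot\psi\colon\tens M_i\to\wh{\tens M_i}$. Factorwise, $g$ followed by $u$ produces $\yi_{M_i}$ on every index, since whether $i\in S$ or $i\in P$ the two partial units compose to the single unit $\yi_{M_i}$; hence $g\cdot\psi=(\tens_{i=1}^n\yi_{M_i})\cdot\phi^n$, which equals $\yi_{\tens M_i}$ by the monoidal\n-transformation identity \eqref{eq-yiyi-phi=yi}. Applying the functor $\hat{\text-}$ and using idempotency in the form $\wh{\yi_M}=\yi_{\hat M}$ together with the monad unit law $\wh{\yi_M}\cdot\mu_M=\id$, I obtain
\[
\wh g\cdot r=\wh{g\cdot\psi}\cdot\mu_{\tens M_i}=\wh{\yi_{\tens M_i}}\cdot\mu_{\tens M_i}=\id_{\wh{\tens M_i}}.
\]

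For the reverse identity I would push $\mu$ through $\wh g$ by naturality of the multiplication, $\mu_{\tens M_i}\cdot\wh g=\wh{\wh g}\cdot\mu_{\tens N_i}$, so that $r\cdot\wh g=\wh{\psi\cdot\wh g}\cdot\mu_{\tens N_i}$, and it remains to evaluate $\psi\cdot\wh g\colon\tens N_i\to\wh{\tens N_i}$. Here I would use naturality of $\phi^n$ with respect to $g$ (the square rewriting $\phi^n\cdot\wh g$ as $(\tens_i\wh{\kappa_i})\cdot\phi^n$, with $\kappa_i$ the factors of $g$) and again $\wh{\yi_{M_i}}=\yi_{\hat M_i}$. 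The point is that $u$ followed by $\tens_i\wh{\kappa_i}$ recombines, index by index, into $\tens_{i=1}^n\yi_{N_i}$, so that $\psi\cdot\wh g=(\tens_{i=1}^n\yi_{N_i})\cdot\phi^n=\yi_{\tens N_i}$ by \eqref{eq-yiyi-phi=yi} applied now to the objects $N_i$. As before this gives $r\cdot\wh g=\wh{\yi_{\tens N_i}}\cdot\mu_{\tens N_i}=\id_{\wh{\tens N_i}}$, and therefore $\wh g$ is invertible with inverse $r$.

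The argument is entirely formal: it uses only that $((\hat{\text-},\phi^\bull),\yi,\mu)$ is the idempotent lax symmetric monoidal monad assumed just before the proposition, together with identity \eqref{eq-yiyi-phi=yi}; no completeness or uniform\n-space input is needed, which is appropriate for the abstract Appendix setting. The step I expect to demand the most care is the factorwise bookkeeping in both computations: one must verify that in each case the two partial\n-unit maps (the one built into $g$, resp.\ $u$, and the one produced by applying $\hat{\text-}$) always reassemble into the \emph{full} unit $\tens_{i=1}^n\yi$, since this recombination is exactly what licenses the use of \eqref{eq-yiyi-phi=yi}. Tracking the three completion levels $M_i$, $\hat M_i$, $\hat{\hat M}_i$ and invoking $\wh{\yi_{M_i}}=\yi_{\hat M_i}$ at the correct index is the only genuinely delicate point, and it is sign\n-free.
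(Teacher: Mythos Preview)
Your proof is correct and constructs exactly the same inverse as the paper does: your $r=\wh\psi\cdot\mu_{\tens M_i}$ is the paper's $\xi$, since $\xi$ is defined there as the unique morphism with $\yi_{\tens N_i}\cdot\xi=(\tens\yi_{N_i}^{\chi(i\in S)})\cdot\phi^n$, which is precisely your $\psi$. The only difference is packaging: you compute with the idempotent monad $(\hat{\text-},\yi,\mu)$ directly, invoking naturality of $\phi^n$ and the unit law $\wh{\yi}\cdot\mu=\id$, while the paper argues by two commutative diagrams and appeals to the reflectivity universal property to cancel $\yi$ on both sides. These are equivalent formulations of the same argument; your version is arguably cleaner in that it makes the role of \eqref{eq-yiyi-phi=yi} and the monad axioms explicit, whereas the paper's diagrammatic version hides the naturality of $\phi^n$ inside the commutativity of square \ovalbox{1}.
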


\begin{proof}
There is a unique morphism 
\(\xi:\wh{\tens_{i=1}^nN_i}\to\wh{\tens_{i=1}^nM_i}\) which forces 
the diagram
	\begin{diagram}[w=5em,h=2.2em]
		&&&&\wh{\tens_{i=1}^nN_i}
		\\
		&&&\ruTTo(4,2)^{\yi_{\tens N_i}} = &\dTTo>\xi
		\\
\tens_{i=1}^nN_i &\rTTo_{\tens\yi_{N_i}^{\chi(i\in S)}} 
&\tens_{i=1}^n\hat{M}_i &\rTTo_{\phi^n} &\wh{\tens_{i=1}^nM_i}
		\\
&\rdTTo<{\tens\yi_{N_i}}>{{\quad}\ =} 
&\dTTo<{\tens\yi_{\hat{M}_i}^{\chi(i\in P)}}~=>{\tens\wh{\yi_{M_i}^{\chi(i\in P)}}}
&= &
		\\
\dLine &&\tens_{i=1}^n\hat{N}_i 
&&\dTTo>{\wh{\tens\yi_{M_i}^{\chi(i\in P)}}}
		\\
		&= &&\rdTTo<{\phi^n} &
		\\
		\HmeetV &&\rTTo^{\yi_{\tens N_i}} &&\wh{\tens_{i=1}^nN_i}
	\end{diagram}
	to commute.
The reflectivity implies that 
\(\xi\cdot\wh{\tens\yi_{M_i}^{\chi(i\in P)}}=1\).
Commutativity of the composite rectangle in this diagram implies 
commutativity of square \ovalbox{1} in the following diagram
	\begin{diagram}[w=4em,h=2.2em]
		\HmeetV &&\rLine_{\yi_{\tens M_i}} &&\HmeetV
		\\
		&= &&&\dTTo
		\\
\uLine &&\tens_{i=1}^n\hat{M}_i &\rTTo^{\phi^n} &\wh{\tens_{i=1}^nM_i}
		\\
&\ruTTo<{\tens\yi_{M_i}} &\uTTo<{=\qquad}>{\tens\yi_{N_i}^{\chi(i\in S)}}
&\ovalbox{1} &\dTTo>{\wh{\tens\yi_{M_i}^{\chi(i\in P)}}}
		\\
\tens_{i=1}^nM_i &\rTTo^{\tens\yi_{M_i}^{\chi(i\in P)}} 
&\tens_{i=1}^nN_i &\rTTo^{\yi_{\tens N_i}} &\wh{\tens_{i=1}^nN_i}
		\\
&\rdTTo<{\tens\yi_{M_i}}>= &\dTTo>{\tens\yi_{N_i}^{\chi(i\in S)}} 
&= &\dTTo>\xi
		\\
\dLine &&\tens_{i=1}^n\hat{M}_i &\rTTo^{\phi^n} &\wh{\tens_{i=1}^nM_i}
		\\
		&= &&&\uTTo
		\\
		\HmeetV &&\rLine^{\yi_{\tens M_i}} &&\HmeetV
	\end{diagram}
Commutativity of the above together with reflectivity implies that 
\(\wh{\tens\yi_{M_i}^{\chi(i\in P)}}\cdot\xi=1\).
	Thus, $\xi$ is inverse to \(\wh{\tens\yi_{M_i}^{\chi(i\in P)}}\).
\end{proof}

The unit object of \((\cd,\tens^n)\) is \(\1=\tens^0(*)\), therefore, 
the unit object of \((\cc,\hat\tens^n)\) is \(\hat\1=\hat\tens^0(*)\).

\begin{corollary}\label{cor-D-monoidal-C-monoidal}
When $\cd$ is a plain/symmetric/braided monoidal category (all 
$\lambda^f$, $\rho^L$ are invertible), so is $\cc$.
\end{corollary}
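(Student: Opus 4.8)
The plan is to verify directly that the two structure morphisms $\hat\lambda^f$ and $\hat\rho^L$ of $\cc$, written out in \propref{prop-C-lax-monoidal}, are invertible as soon as the corresponding $\lambda^f$ and $\rho^L$ of $\cd$ are. Two elementary observations drive both verifications: the reflector $\wh{\text-}:\cd\to\cc$ is a functor, so it carries isomorphisms to isomorphisms; and for every object $M\in\Ob\cc$ the unit $\yi_M:M\to\hat M$ is already an isomorphism, since $M$ lies in the reflective subcategory.

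First I would treat $\hat\rho^L=\wh{\rho^L}\cdot\yi_M^{-1}$ from \eqref{eq-rho-hat-L}. If $\rho^L$ is invertible in $\cd$, then $\wh{\rho^L}$ is invertible by the first observation, and $\yi_M^{-1}$ is invertible by the second (here $M\in\Ob\cc$). Hence $\hat\rho^L$, being a composite of two isomorphisms, is invertible.

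Next I would treat $\hat\lambda^f=\wh{\lambda^f}\cdot\wh{\tens^{j\in J}\yi}$, as given by the formula for $\hat\lambda^f$ in \propref{prop-C-lax-monoidal}. Invertibility of $\lambda^f$ yields invertibility of $\wh{\lambda^f}$ exactly as above. The remaining factor $\wh{\tens^{j\in J}\yi}$ applies $\yi$ to each tensorand $\tens^{i\in f^{-1}j}M_i$ and then completes; this is precisely the morphism shown to be invertible in \propref{pro-1yi1yi1yi1-invertible}, taken in the extreme case in which every index belongs to $P$ (so that $\chi(j\in P)=1$ for all $j\in J$, with the tensorands $\tens^{i\in f^{-1}j}M_i$ playing the role of the objects $M_i$ there). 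Thus $\wh{\tens^{j\in J}\yi}$ is invertible, and so is the composite $\hat\lambda^f$.

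Since $\lambda^f$ and $\rho^L$ being invertible for all ordered maps $f$ and all one-element sets $L$ is exactly what it means for $\cd$ to be a strong plain/symmetric/braided monoidal category, and since the above produces inverses for every $\hat\lambda^f$ and $\hat\rho^L$, the category $\cc$ is monoidal of the same flavour. The only step carrying real content is the identification of $\wh{\tens^{j\in J}\yi}$ with the morphism of \propref{pro-1yi1yi1yi1-invertible}; everything else reduces to functoriality of the reflector and to $\yi_M$ being an isomorphism on $\Ob\cc$.
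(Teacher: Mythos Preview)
Your proof is correct and follows essentially the same approach as the paper's own proof, which simply notes that invertibility of $\wh{\lambda^f}$ and of $\wh{\tens^{j\in J}\yi}$ (the latter via \propref{pro-1yi1yi1yi1-invertible}) implies invertibility of their composition $\hat{\lambda}^f$. You have merely spelled out the $\hat\rho^L$ case as well, which the paper leaves implicit since the formula \eqref{eq-rho-hat-L} already presents it as a composite of isomorphisms.
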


In fact, invertibility of $\wh{\lambda^f}$ and of 
\(\wh{\tens^{j\in J}\yi}\) implies invertibility of their composition 
\(\hat{\lambda}^f\).

\subsection{Algebras and coalgebras.}
Assume that the category $\cd$ is monoidal (all $\lambda^f$, $\rho^L$ 
are invertible).
Hence, the same for $\cc$.
The category of algebras (monoids) in $\cd$ (resp. $\cc$) is denoted 
$\Alg_{\tilde{\cd}}$ (resp. $\Alg_{\tilde{\cc}}$).

\begin{proposition}
The full and faithful functor 
\(\inj:\Alg_{\tilde{\cc}}\to\Alg_{\tilde{\cd}}\), 
\((B,\mu_B:B\hat{\tens}B\to B,\eta_B:\hat{\1}\to B)\mapsto
\bigl(B,B\tens B\rto\yi \wh{B\tens B}\rto{\mu_B} B,\1\rto\yi 
\hat{\1}\rto{\eta_B} B\bigr)\) 
turns $\Alg_{\tilde{\cc}}$ into a reflective subcategory of 
$\Alg_{\tilde{\cd}}$.
\end{proposition}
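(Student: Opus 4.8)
The plan is to realize $\Alg_{\tilde\cc}$ as the category of algebras for the lift of the completion monad to monoids, which is again idempotent, and therefore reflective. Since the excerpt has already established that $((\hat{\text-},\phi^\bull),\yi,\mu)$ is an idempotent lax monoidal monad on $(\cd,\tens)$ and that (by \corref{cor-D-monoidal-C-monoidal}) $\cc$ is genuinely monoidal under $\hat\tens$, the formal skeleton is in place; the work is to spell out fullness, faithfulness, and the universal property by hand, using reflectivity of $\cc\subset\cd$ as the only cancellation tool. Faithfulness is immediate: $\cc\subset\cd$ is full and a morphism of monoids is determined by its underlying arrow, so $\inj$ is injective on hom-sets.

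For fullness I would take $\cc$-monoids $B,B'$ and a $\cd$-monoid morphism $f\colon\inj B\to\inj B'$. Its underlying arrow is already a morphism $B\to B'$ in $\cc$ by fullness of the inclusion, so only compatibility with the $\hat\tens$-structures must be upgraded. Writing the $\cd$-multiplication of $\inj B$ as $\yi\cdot\mu_B$ and using naturality of $\yi$ together with $f\hat\tens f=\wh{f\tens f}$, the hypothesis $(f\tens f)\cdot\yi\cdot\mu_{B'}=\yi\cdot\mu_B\cdot f$ rewrites as $\yi\cdot[(f\hat\tens f)\cdot\mu_{B'}]=\yi\cdot[\mu_B\cdot f]$, where both bracketed arrows land in $B'\in\cc$; the reflectivity bijection $\cd(\yi,B')$ cancels $\yi$ and yields $(f\hat\tens f)\cdot\mu_{B'}=\mu_B\cdot f$. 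The unit axiom is cancelled against $\yi_\1$ in the same manner, so $f$ is a $\cc$-monoid morphism.

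For reflectivity I would construct the reflector explicitly. Given $A=(A,m,e)\in\Alg_{\tilde\cd}$, equip $\hat A$ with the $\cc$-multiplication $\hat\mu_{\hat A}$ determined by $\yi\cdot\hat\mu_{\hat A}=\bigl(\hat A\tens\hat A\rto{\phi^2}\wh{A\tens A}\rto{\wh m}\hat A\bigr)$ (reflectivity makes this factorization unique) and unit $\hat\1\rto{\wh e}\hat A$; the monoid axioms for $\hat A$ follow from the coherence of $\phi^\bull$, and \eqref{eq-yiyi-phi=yi} together with naturality of $\yi$ shows $\yi_A\colon A\to\inj\hat A$ is a $\cd$-monoid morphism. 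For universality, given $C\in\Alg_{\tilde\cc}$ and a $\cd$-monoid morphism $g\colon A\to\inj C$, reflectivity produces a unique $\tilde g\colon\hat A\to C$ in $\cc$ with $g=\yi_A\cdot\tilde g$; it remains to check $\tilde g$ is a $\cc$-monoid morphism. Both $\hat\mu_{\hat A}\cdot\tilde g$ and $(\tilde g\hat\tens\tilde g)\cdot\mu_C$ are arrows $\wh{\hat A\tens\hat A}\to C$; after precomposing with $\yi$ and then with $\yi_A\tens\yi_A$, the identity $(\yi_A\tens\yi_A)\cdot\phi^2=\yi_{A\tens A}$ of \eqref{eq-yiyi-phi=yi}, naturality of $\yi$, and the assumption that $g$ is a $\cd$-monoid morphism collapse both sides to $m\cdot g$. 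One then climbs back up: cancelling $\yi_{A\tens A}$ by reflectivity (target $C\in\cc$) and the invertible arrow $\wh{\yi_A\tens\yi_A}$ supplied by \propref{pro-1yi1yi1yi1-invertible} promotes this equality to the desired identity on $\wh{\hat A\tens\hat A}$. The unit condition is handled similarly but more cheaply.

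The main obstacle will be precisely this last verification that the object-level reflection $\tilde g$ respects the two monoid structures — equivalently, that the lifted monad on $\Alg_{\tilde\cd}$ stays idempotent. This is the one place where the monoidal coherence of $\phi^\bull$, the compatibility \eqref{eq-yiyi-phi=yi} of $\yi$ with $\phi$, and the invertibility result \propref{pro-1yi1yi1yi1-invertible} are all genuinely used to descend an equation of maps out of $\wh{\hat A\tens\hat A}$ down to $A\tens A$; everything else is a formal consequence of reflectivity of $\cc$ in $\cd$. Once $\yi_A$ is shown universal, $\hat{\text-}\colon\Alg_{\tilde\cd}\to\Alg_{\tilde\cc}$ is left adjoint to $\inj$ and $\Alg_{\tilde\cc}$ is a reflective subcategory.
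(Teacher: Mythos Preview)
Your proposal is correct and follows essentially the same approach as the paper: define $\inj$, verify it is full and faithful, construct the left adjoint by completing, and show $\yi_A$ is the unit of the adjunction. The paper in fact leaves almost all of these verifications to the reader, so your argument is considerably more detailed; the only cosmetic difference is that the paper writes the completed multiplication on $\hat A$ as $\wh{\yi\tens\yi}^{-1}\cdot\wh{\mu_A}$ rather than as the unique factorization of $\phi^2\cdot\wh{\mu_A}$ through $\yi$, but these coincide.
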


\begin{proof}
First of all, $\inj B$ is an algebra in $\cd$ (the proof is left to 
the reader).
Secondly, any morphism \(f:A\to B\in\Alg_{\tilde{\cc}}\) induces 
\(f:\inj A\to\inj B\in\Alg_{\tilde{\cd}}\).
Clearly, the functor $\inj$ is faithful.
One can show that it is full.
This functor has a left adjoint, namely, the completion functor 
\(\hat{\text-}:\Alg_{\tilde{\cd}}\to\Alg_{\tilde{\cc}}\),
\((A,\mu_A,\eta_A)\mapsto\bigl(\hat{A},\mu_{\hat{A}}
=(\hat{A}\hat{\tens}\hat{A}
=\wh{\hat{A}\tens\hat{A}}\rTTo^{\wh{\,\yi\tens\yi\,}^{-1}} 
\wh{A\tens A}\rto{\wh{\mu_A}} \hat{A}),\eta_{\hat{A}}
=\wh{\eta_A}:\hat{\1}\to\hat{A}\bigr)\).
The natural transformation 
\(\yi:A\to\hat{A}=\bigl(\hat{A},\hat{A}\tens\hat{A}\rto\yi 
\wh{\hat{A}\tens\hat{A}}\rTTo^{\wh{\,\yi\tens\yi\,}^{-1}} \wh{A\tens A}
\rto{\wh{\mu_A}} \hat{A},\1\rto\yi \hat{\1}\rto{\wh{\eta_A}} 
\hat{A}\bigr)\) 
is given precisely by $\yi:A\to\hat{A}$.

The proof of these statement is left to the reader.
\end{proof}

Let $\tilde\cd$ contain arbitrary small coproducts.
Then for any \(M\in\Ob\cd\) there is the tensor algebra 
\(TM=\coprod_{n\ge0}M^{\tens n}\equiv
\oplus_{n\ge0}M^{\tens n}\in\tilde\cd\).
The functor \(T:\tilde\cd\to\Alg_{\tilde\cd}\) is left adjoint to the 
underlying functor \(\cu:\Alg_{\tilde\cd}\to\tilde{\cd}\).
Then for \(B\in\Alg_{\tilde{\cc}}\), \(X\in\cc\) there are natural 
bijections
\[ \Alg_{\tilde{\cc}}(\wh{TX},B) \cong\Alg_{\tilde{\cd}}(TX,\inj B) 
\cong\tilde{\cd}(X,\cu\inj B) =\tilde{\cc}(X,\cu B).
\]
Hence, the functor \(\tilde{\cc}\to\Alg_{\tilde{\cc}}\), 
\(X\mapsto\wh{TX}\) is left adjoint to 
\(\cu:\Alg_{\tilde{\cc}}\to\tilde{\cc}\).
Multiplication in the algebra \(\hat{A}=\wh{TX}\) with \(A=TX\)
\begin{equation}
\mu_{\hat{A}}^{(I)} =\bigl[(\wh{TX})^{\hat\tens I} 
=\wh{(\wh{T\!X})^{\tens I}} \rTTo^{\wh{\yi^{\tens I}}^{-1}} 
\wh{(T\!X)^{\tens I}} \rTTo^{\wh{\mu_A^{(I)}}} \wh{TX}\bigr]
\label{eq-Multiplication-algebra-hat-TX}
\end{equation}
is denoted also as \(\hat{\tens}\) (by abuse of notation).

\subsubsection{Completion of coalgebras.}
\label{sec-Completion-coalgebras}
The category of coalgebras (comonoids) in $\cd$ (resp. $\cc$) is denoted
$\Coalg_{\tilde{\cd}}$ (resp. $\Coalg_{\tilde{\cc}}$).
The completion functor extends to a functor 
\(\hat{\text-}:\Coalg_{\tilde{\cd}}\to\Coalg_{\tilde{\cc}}\),
\((C,\Delta_C,\eps_C)\mapsto\bigl(\hat{C},\Delta_{\hat{C}}
=(\hat{C}\rto{\wh{\Delta_C}} \wh{C\tens C}\rTTo^{\wh{\yi\tens\yi}} 
\wh{\hat{C}\tens\hat{C}}= \hat{C}\hat{\tens}\hat{C}),
\eps_{\hat{C}}=\wh{\eps_C}:\hat{C}\to\hat{\1}\bigr)\).
The proof is left to the reader.
Notice that
\[ \bigl(\hat{C}\rto{\wh{\Delta_C^{(k)}}} \wh{C^{\tens k}}
\rTTo^{\wh{\yi^{\tens k}}} \wh{\hat{C}^{\tens k}}
=\hat{C}^{\hat\tens k}\bigr) =\Delta_{\hat{C}}^{(k)}.
\]


\begin{thebibliography}{Kelly, 1982}

\bibitem[Bespalov, Lyubashenko, Manzyuk, 2008]{BesLyuMan-book}
Yuri Bespalov, Volodymyr Lyubashenko, and Oleksandr Manzyuk, 
\emph{Pretriangulated ${A}_\infty$-categories}, Proceedings of the 
Institute of Mathematics of NAS of Ukraine. Mathematics and its 
Applications, vol.~76, Institute of Mathematics of NAS of Ukraine, 
Kyiv, 2008, \url{http://www.math.ksu.edu/~lub/papers.html}.

\bibitem[Borceux, 1994]{Borceux:CatHandbook2}
Francis Borceux, \emph{Handbook of categorical algebra. 2}, Encyclopedia
of Mathematics and its Applications, vol.~51, Cambridge University 
Press, 1994, Categories and structures.

\bibitem[Bourbaki, 1971]{zbMATH03395017}
Nicolas Bourbaki, \emph{Topologie g\'en\'erale}, \'El\'ements de
math\'ematique, Hermann, Paris, 1971, Chap. 1 \`a 4, 351 pp.

\bibitem[De~Deken, Lowen, 2018]{MR3856926}
Olivier De~Deken and Wendy Lowen, \emph{Filtered 
{$cA_{\infty}$}-categories and functor categories}, Appl. Categ. 
Structures \textbf{26} (2018), no.~5,
943--996, \url{https://doi.org/10.1007/s10485-018-9526-2}.

\bibitem[Day, Street, 2003]{DayStreet-subst}
Brian~J. Day and Ross~H. Street, \emph{Abstract substitution in enriched
categories}, J. Pure Appl. Algebra \textbf{179} (2003), no.~1-2, 49--63.

\bibitem[Fukaya, 2002]{Fukaya:FloerMirror-II}
Kenji Fukaya, \emph{Floer homology and mirror symmetry. {II}}, Minimal
surfaces, geometric analysis and symplectic geometry (Baltimore, MD, 
1999), Adv. Stud. Pure Math., vol.~34, Math. Soc. Japan, Tokyo, 2002, 
pp.~31--127.

\bibitem[Fukaya, Oh, Ohta, Ono, 2009]{FukayaOhOhtaOno:Anomaly}
Kenji Fukaya, Yong-Geun Oh, Hiroshi Ohta, and Kaoru Ono, 
\emph{Lagrangian intersection {F}loer theory: Anomaly and obstruction}, 
AMS/IP Studies in Advanced Mathematics Series, vol.~46, American 
Mathematical Society, 2009.

\bibitem[Hermida, 2000]{Hermida:MultiRep}
Claudio Hermida, \emph{Representable multicategories}, Advances in Math.
\textbf{151} (2000), no.~2, 164--225.

\bibitem[Kelly, 1982]{KellyGM:bascec}
Gregory~Maxwell Kelly, \emph{Basic concepts of enriched category 
theory}, Theory Appl. Categ. \textbf{64} (1982, 2005), no.~10, vi+137 
pp. (electronic), 
\url{http://www.tac.mta.ca/tac/reprints/articles/10/tr10abs.html} 
Reprint of the 1982 original [Cambridge Univ. Press, Cambridge].

\bibitem[Kontsevich, 1995]{Kontsevich:alg-geom/9411018}
Maxim Kontsevich, \emph{Homological algebra of mirror symmetry}, Proc.
Internat. Cong. Math., Z\"urich, Switzerland 1994 (Basel), vol.~1,
Birkh\"auser Verlag, 1995, 
\href{http://arXiv.org/abs/alg-geom/9411018}{{\tt 
	arXiv:\linebreak[1]alg-geom/9411018}}, pp.~120--139.

\bibitem[Lyubashenko, 2003]{Lyu-AinfCat}
Volodymyr Lyubashenko, \emph{Category of ${A}_\infty$-categories}, Homology,
Homotopy Appl. \textbf{5} (2003), no.~1, 1--48,
\href{http://arXiv.org/abs/math/0210047}{{\tt
		arXiv:\linebreak[1]math/0210047}}
\url{http://intlpress.com/HHA/v5/n1/a1/}.

\bibitem[Polishchuk, Positselski, 2012]{1010.0982}
Alexander Polishchuk and Leonid Positselski, \emph{Hochschild 
(co)homology of the second kind {I}}, Trans. Amer. Math. Soc. 
\textbf{364} (2012), no.~10, 5311--5368, 
\href{http://arXiv.org/abs/1010.0982}{{\tt
		arXiv:\linebreak[1]1010.0982}}.

\bibitem[Tarski, 1939]{Tarski-well-ordered-subsets}
Alfred Tarski, \emph{On well-ordered subsets of any set}, Fundamenta
Mathematicae \textbf{32} (1939), 176--183,
\url{http://matwbn.icm.edu.pl/ksiazki/fm/fm32/fm32115.pdf}.

\end{thebibliography}

\end{document}